\definecolor{gray}{gray}{0.5}
\definecolor{morat}{rgb}{0.6, 0.2, 0.5}
\definecolor{verd}{rgb}{0.2, 0.8, 0.2}
\newcommand{\bc}{\begin{center}}
\newcommand{\ec}{\end{center}}
\newtheorem{thm}{Theorem}
\newtheorem{prop}{Proposition}
\newtheorem{cor}{Corollary}
\newtheorem{defi}{Definition}
\newtheorem{lem}{Lemma}
\begin{document}

\selectlanguage{english}


\title{
 On the Partition Dimension and the Twin Number of  a Graph
 \thanks{ Research partially supported by grants MINECO MTM2015-63791-R, Gen. Cat. DGR 2014SGR46 and MTM2014-60127-P. }
}

\author{
C. Hernando\thanks{Universitat Polit\`ecnica de Catalunya, Barcelona, Spain, carmen.hernando@upc.edu}
\and M. Mora\thanks{Universitat Polit\`ecnica de Catalunya, Barcelona, Spain, merce.mora@upc.edu}
\and I. M. Pelayo\thanks{{\it Corresponding author}:
Departament de Matem{\`a}tiques, ESAB, Avinguda del Canal
Ol¡mpic, s/n 08860 Castelldefels, Spain.ignacio.m.pelayo@upc.edu}
}

\maketitle

\begin{abstract}
A  partition $\Pi$ of the vertex set of a connected graph $G$ is  a \emph{locating partition} of $G$ if every vertex is uniquely determined by its vector of distances to the elements of $\Pi$.  
The \emph{partition  dimension}  of $G$ is the minimum  cardinality of a locating partition of $G$. 
A pair of vertices $u,v$ of a graph $G$ are  called \emph{twins} if they have exactly the same set of neighbors other than $u$ and $v$.
A \emph{twin class} is a maximal set of pairwise twin vertices.
The \emph{twin number} of a graph $G$ is the maximum cardinality of a twin class of $G$.

In this paper we undertake the study of the partition dimension of a graph by also considering its twin number.
This approach allows us to obtain the set of connected graphs of order $n\ge9$ having partition dimension $n-2$.
This set is formed by exactly 15 graphs, instead of 23, as was wrongly stated in the paper ''Discrepancies between metric dimension and partition dimension of a connected graph''  (Disc. Math. 308 (2008) 5026--5031).

\vspace{.1cm}\noindent {\it Key words:}  locating set; locating partition;  metric dimension; partition dimension; twin number

\end{abstract}

\section{Introduction}

All the graphs considered are undirected, simple, finite and \underline{connected}. 
The vertex set and edge set of a graph $G$ are denoted by $V(G)$ and $E(G)$. 
Let $v$ be a vertex of  $G$. 
The \emph{open neighborhood} of $v$ is $\displaystyle N_G(v)=\{w \in V:vw \in E\}$, 
and the \emph{closed neighborhood} of $v$ is $N_G[v]=N(v)\cup \{v\}$.
The \emph{degree} of $v$ is $\deg_G(v)=|N_G(v)|$.
If $N_G[v]=V(G)$ (resp. $\deg_G(v)=1$), then $v$ is called \emph{universal} (resp. a \emph{leaf}).
Let $W$ be a subset of vertices of a graph $G$.
The open neighborhood of $W$ is $\displaystyle N_G(W)=\cup_{v\in W} N_G(v)$, and the closed neighborhood of $W$ is $N_G[W]=N_G(W)\cup W$.
The subgraph of $G$ induced by $W$, denoted by $G[W]$, has as vertex set $W$ and $E(G[W]) = \{vw \in E(G) : v \in W,w \in W\}$.
The \emph{complement} of $G$,  denoted by $\overline{G}$, is the  graph on the same vertices as $G$ such that two vertices are adjacent in $\overline{G}$ if and only if they are not adjacent in $G$.
Let $G_1$, $G_2$ be two graphs having disjoint vertex sets. 
The (disjoint) \emph{union} $G=G_1+G_2$ is the graph such that  $V(G)=V(G_1)\cup V(G_2)$ and $E(G)=E(G_1)\cup E(G_2)$. 
The \emph{join} $G=G_1\vee G_2$ is the graph such that 
$V(G)=V(G_1)\cup V(G_2)$ and $E(G)=E(G_1)\cup E(G_2)\cup \{uv:u\in V(G_1),v\in V(G_2)\} $.

The distance between vertices $v,w\in V(G)$ is denoted by $d_G(v,w)$, or $d(v,w)$ if the graph $G$ is clear from the context. 
The diameter of $G$ is ${\rm diam}(G) = \max\{d(v,w) : v,w \in V(G)\}$.
The distance between a vertex $v\in V(G)$ and a set of vertices $S\subseteq V(G)$, denoted by  $d(v,S)$ is the minimum of the distances between $v$ and the vertices of $S$, that is to say, $d(v,S)=\min\{d(v,w):w\in S\}$.
Undefined terminology can be found in \cite{chlezh11}.

A vertex $x\in V(G)$ \emph{resolves} a pair of vertices $v,w\in V(G)$ if $d(v,x)\ne d(w,x)$. 
A set of vertices $S\subseteq V(G)$ is a \emph{locating set} of $G$, 
if every pair of distinct vertices of $G$ are resolved by some vertex in $S$. 
The \emph{metric dimension} $\beta(G)$ of $G$ is the minimum cardinality of a locating set. 
Locating sets  were first defined by \cite{hararymelter} and \cite{slater}, and they have since been widely investigated (see \cite{chmppsw07,hmpsw10} and their references).

Let $G=(V,E)$ be a graph of order $n$. 
If $\Pi=\{S_1,\ldots,S_k\}$ is a partition of $V$, we denote by $r(u|\Pi)$ 
the vector of distances between a vertex $u\in V$ and the elements of  $\Pi$, that is, $r(u,\Pi)=(d(u,S_1),\dots ,d(u,S_k))$.  
The partition $\Pi$ is called a  \emph{locating partition} of $G$ if, for any pair of distinct vertices $u,v\in V$, 
$r(u,\Pi)\neq r(v,\Pi)$.
Observe that to  prove that a given partition is locating, it is enough to check that the vectors of distances of every pair of vertices belonging to the same part are different.
The \emph{partition  dimension} $\beta_p(G)$ of $G$ is the minimum  cardinality of a locating partition of $G$. 
Locating partitions were introduced in \cite{ChaSaZh00}, and further studied in  
\cite{bada12,chagiha08,fegooe06,ferogo14,gyro10,gyjakuta14,grstramiwi14,royeku16,royele14,tom08,toim09,tojasl07}.
Next, some known results involving the partition dimension are shown.

\begin{thm} [\cite{ChaSaZh00}]\label{mdpd}
Let $G$  be a graph of order $n\ge3$ and diameter ${\rm diam}(G)=d$
\begin{enumerate}
\item $\beta_p(G) \le \beta(G)+1$.
\item $\beta_p(G) \le n-d+1$. Moreover, this bound is sharp.
\item $\beta_p(G)=n-1$ if and only if $G$ is isomorphic to either the star $K_{1,n-1}$, or 
	the complete split graph $K_{n-2} \vee \overline{K_2}$,  or the graph $K_1\vee (K_1+K_{n-2})$.
\end{enumerate}
\end{thm}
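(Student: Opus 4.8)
The first two parts are short constructions, so I would treat them together. For part (1) the plan is to convert a minimum locating set into a partition: let $S=\{v_1,\dots,v_k\}$ be a locating set with $k=\beta(G)$ and take $\Pi=\{\{v_1\},\dots,\{v_k\},V\setminus S\}$ (discarding the last block if $S=V$). For every $u$ the first $k$ coordinates of $r(u|\Pi)$ are exactly $d(u,v_1),\dots,d(u,v_k)$, i.e.\ the metric vector of $u$ with respect to $S$; as $S$ is locating these are pairwise distinct, so $\Pi$ is a locating partition with at most $k+1$ blocks and $\beta_p(G)\le\beta(G)+1$. For part (2) I would fix a diametral pair and a shortest path $x_0,\dots,x_d$, and take $\Pi$ consisting of $\{x_0\}$, the block $B=\{x_1,\dots,x_d\}$, and one singleton for each of the remaining $n-d-1$ vertices, giving $n-d+1$ blocks. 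The only non-singleton block is $B$, and since $d(x_i,\{x_0\})=i$ for $1\le i\le d$ these values are pairwise distinct, every pair inside $B$ is resolved by $\{x_0\}$; sharpness then comes from the path $P_n$, where $d=n-1$ forces the bound to be $2=\beta_p(P_n)$.

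Part (3) is the crux. I would first record the extremal fact $\beta_p(G)=n$ iff $G\cong K_n$: two vertices may share a block of an otherwise all-singleton partition exactly when some third vertex resolves them, i.e.\ exactly when they are not twins, and a connected graph in which every pair is a twin pair must be $K_n$. Hence $\beta_p(G)=n-1$ already yields $G\ne K_n$ and the existence of a locating partition with $n-1$ blocks. For the implication ($\Leftarrow$), for each of the three graphs I would exhibit a locating partition with $n-1$ blocks and then rule out any with $n-2$ blocks. The non-existence rests on two observations: vertices of a common twin class must lie in distinct blocks, and a universal vertex, or a distinguishing vertex that has been merged into a block, contributes only a minimum distance that masks its resolving power. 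Each of the three graphs contains a twin class of size $n-2$ or $n-1$ (the leaves of the star, the universal clique of $K_{n-2}\vee\overline{K_2}$, the clique of $K_1\vee(K_1+K_{n-2})$), forcing that many blocks, and a short check shows every way of packing the remaining one or two vertices into $n-2$ blocks leaves a pair unresolved because the unique resolving vertex always ends up masked inside an enlarged block.

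For the implication ($\Rightarrow$) I would organize the argument by the twin number $\tau(G)$. If $\tau(G)=n-1$, the only connected possibility is the star. If $\tau(G)=n-2$, a finite case analysis on whether the large twin class is a clique (true twins) or an independent set (false twins), together with the adjacencies of the two remaining vertices, produces among graphs with $\beta_p=n-1$ exactly $K_{n-2}\vee\overline{K_2}$ and $K_1\vee(K_1+K_{n-2})$; the other admissible configurations, such as $K_2\vee\overline{K_{n-2}}$ and $K_{2,n-2}$, are discarded because they admit an explicit locating partition with $n-2$ blocks. Finally, if $\tau(G)\le n-3$, the plan is to construct two disjoint mergeable pairs, each resolved by a distinct singleton block, producing a locating partition with $n-2$ blocks and contradicting $\beta_p(G)=n-1$.

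The main obstacle is exactly this last step together with the $\tau(G)=n-2$ enumeration. When two blocks are enlarged simultaneously, the distance of a vertex to an enlarged block is a minimum that can coincide for the very two vertices one wishes to separate, so the two merges and their resolving singletons must be chosen so as not to interfere. Controlling this masking phenomenon — guaranteeing enough asymmetry among the vertices when the twin number is small, and excluding it case by case when it equals $n-2$ — is where the real work lies; the small orders should be verified directly, since the extremal families degenerate and coincide for tiny $n$.
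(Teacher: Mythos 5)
Parts (1) and (2) of your proposal are correct, and they are exactly the classical constructions: singletons on a minimum locating set plus one leftover block, and a diametral geodesic packed into one block resolved by the singleton $\{x_0\}$, with $P_n$ witnessing sharpness. Your backward direction of (3) is also sound in outline: the leaves of $K_{1,n-1}$ force $n-1$ blocks outright, and for the other two graphs the ``masking'' you describe is precisely the argument the paper isolates as Proposition~\ref{cal} (if the $\tau$-set $W$ induces $K_\tau$ and $G\neq K_n$, any vertex of $N(W)\setminus W$ sharing a block with some $w_i$ inherits the vector $(1,\dots,1,0,1,\dots,1)$, so $\beta_p(G)\ge\tau+1$). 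Your $\tau=n-1$ and $\tau=n-2$ cases likewise match the paper's Propositions~\ref{twin.list}(9), \ref{prop.twingran} and \ref{cal0}. Note, for calibration, that the paper does not prove this theorem at all --- it quotes it from \cite{ChaSaZh00} --- and re-derives part (3) only for $n\ge9$ as Proposition~\ref{n-1iff} and Corollary~\ref{pretty}, via its twin-number machinery; your organization by $\tau$ is structurally the same as that re-derivation.

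The genuine gap is the step $\tau(G)\le n-3\Rightarrow\beta_p(G)\le n-2$, which you state only as a plan (``construct two disjoint mergeable pairs, each resolved by a distinct singleton'') and then yourself flag as ``where the real work lies.'' That is the entire difficulty, and you supply no mechanism for choosing pairs whose resolving vertices do not interfere. In the paper this single implication (in the stronger form needed there) consumes most of Section~3: Lemma~\ref{lem.grau} --- pair each neighbor $x_i$ of a fixed vertex with a non-neighbor $y_i$, so that the \emph{one} vertex $u$ resolves \emph{all} merged pairs simultaneously, which is the interference-free device your sketch lacks --- followed by the long degree- and diameter-driven case analyses of Propositions~\ref{prop.twinpetitdiam3} and \ref{prop.twinpetitdiam2}, and, for $\frac{n}{2}<\tau\le n-3$ with clique $\tau$-set, Theorem~\ref{kmedios}, whose construction needs a dominating set of at most half the order in a complement subgraph (Ore's theorem); even then the machinery requires $n\ge9$. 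Two further points: you overlook the cheap reduction via your own part (2) --- if ${\rm diam}(G)\ge3$ then $\beta_p(G)\le n-2$ immediately, so only diameter-$2$ graphs need any work, which is how \cite{ChaSaZh00} handles all $n\ge3$ --- and a partition with $n-2$ parts may also consist of one block of size $3$ plus singletons, an option your two-doubletons plan silently excludes. As written, the proposal proves (1), (2), and the easy half of (3), but the forward direction of (3) remains unestablished.
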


In \cite{tom08}, its author approached the characterization of the set of graphs of order $n\ge9$ having partition dimension $n-2$, presenting a collection of 23 graphs (as a matter fact there are 22, as the so-called graphs
 $G_4$ and $G_6$ are isomorphic).
Although employing a different notation (see Table \ref{tab.equivalencias}), the characterization given in this paper is the following.

\begin{thm} [\cite{tom08}]\label{n-2 wrong}
Let $G=(V,E)$ be a  graph of order $n\ge9$. Then  $\beta_p(G)=n-2$ if and only if it belongs either to the family $\{H_i\}_{i=1}^{15}$, except $H_7$, (see Figure \ref{taun234}) or to the family $\{F_i\}_{i=1}^{8}$ (see Figure \ref{pdn-2 wrong}).
\end{thm}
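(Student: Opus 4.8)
The statement is an equivalence, so I would prove the two implications separately, and in both directions the central tool is the interaction between a size-$(n-2)$ locating partition and the twin structure of $G$. Writing $\tau(G)$ for the twin number, the first fact I would record is that two twins can never share a part of a locating partition: if $u,v$ are twins then $d(u,w)=d(v,w)$ for every $w\notin\{u,v\}$, so $r(u,\Pi)=r(v,\Pi)$ whenever $u$ and $v$ lie in a common part. Hence every locating partition separates each twin class, giving the lower bound $\beta_p(G)\ge\tau(G)$. The second fact is quantitative: distributing $n$ vertices among $n-2$ nonempty parts leaves an excess of $2$, so the part sizes are either one $3$ with the rest $1$, or two $2$'s with the rest $1$. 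Finally, Theorem~\ref{mdpd}(2) forces $n-2\le n-\mathrm{diam}(G)+1$, i.e. $\mathrm{diam}(G)\le 3$, which confines all distance vectors to entries in $\{0,1,2,3\}$ and makes the resolving conditions checkable by hand.

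For the sufficiency direction---that every graph in $\{H_i\}_{i=1}^{15}\setminus\{H_7\}$ and in $\{F_i\}_{i=1}^{8}$ has $\beta_p=n-2$---I would treat each family uniformly in $n$. For the upper bound I would exhibit, for each graph, an explicit partition into $n-2$ parts, placing almost all vertices in singletons and grouping only the two or three vertices that the structure permits, and then verify that the few vertices sharing a part receive distinct distance vectors; with diameter at most $3$ this is a finite check per family. For the matching lower bound I would combine $\beta_p(G)\ge\tau(G)$ with Theorem~\ref{mdpd}(3) to rule out the value $n-1$, and where the twin bound is not tight (as already happens for $K_{n-2}\vee\overline{K_2}$) argue directly that no partition into $n-3$ parts can separate the vertices forced to collide.

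The necessity direction is the heart of the argument and where I expect the real work. Assuming $\beta_p(G)=n-2$, I would fix an optimal locating partition $\Pi$ of the structural type above and analyse $G$ through its twin classes, since these drive the count of parts. Using $\beta_p(G)\ge\tau(G)$ together with the constraint $\mathrm{diam}(G)\le 3$ and the exclusion of the three graphs of Theorem~\ref{mdpd}(3), I would show that $\tau(G)$ is confined to a short range of admissible values and then, for each such value, determine the possible adjacencies among the few vertices lying outside the large twin classes. Each configuration either reconstructs one of the listed graphs or is seen to admit a locating partition of size $n-3$, contradicting optimality; this latter elimination is the delicate, case-heavy step.

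The main obstacle is guaranteeing that the case analysis in the necessity direction is simultaneously exhaustive and free of redundancy: one must neither omit a genuine $(n-2)$-graph nor retain a candidate whose partition dimension is actually smaller. The hypothesis $n\ge 9$ is essential here, since small orders permit sporadic coincidences among distance vectors, and the principal danger---precisely the source of the miscount this paper sets out to correct---is the verification that each proposed extremal graph, and in particular each member of the $F_i$ family, truly attains $n-2$ rather than dropping to $n-3$. I would therefore devote the most care to certifying the exact partition dimension of every boundary configuration.
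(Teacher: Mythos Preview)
The proposal cannot succeed because the statement you are trying to prove is false, and the paper's entire purpose is to demonstrate this. Theorem~\ref{n-2 wrong} is not proved in the paper; it is merely \emph{quoted} from \cite{tom08} and then refuted. Concretely, the paper shows (Corollaries~\ref{f1234}, \ref{f56}, \ref{f78}) that every graph $F_1,\dots,F_8$ satisfies $\beta_p(F_i)=n-3$, not $n-2$, by exhibiting explicit locating partitions of size $n-3$; for instance, for $F_1\cong \overline{K_{n-3}}\vee(K_2+K_1)$ the partition $\Pi=\{\{v_1,v_{n-2}\},\{v_2,v_{n-1}\},\{v_3,v_n\},\{v_4\},\dots,\{v_{n-3}\}\}$ is locating. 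Moreover, $H_7$, which the quoted theorem excludes, actually does satisfy $\beta_p(H_7)=n-2$ and appears in the paper's corrected list (Theorem~\ref{n-2 true}).

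You in fact identify the exact failure point yourself: the ``principal danger'' you name in your last paragraph---that some $F_i$ might drop to $n-3$---is not a danger to be guarded against but the actual situation. Your sufficiency argument would therefore break at the lower-bound step for every $F_i$: the twin bound gives only $\beta_p(F_i)\ge\tau(F_i)=n-3$ (or $n-4$), and no direct argument can push this to $n-2$ because $n-3$ is the true value. Likewise, your necessity argument would wrongly eliminate $H_7$. The correct target is Theorem~\ref{n-2 true}, whose proof uses the machinery of Section~3 (in particular Theorem~\ref{thm.twinpetit} to force $\tau(G)>n/2$, and Theorem~\ref{txulisimo} and Corollary~\ref{xulisimo} to pin $\tau(G)\in\{n-4,n-3,n-2\}$) before the case analysis you outline.
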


\begin{figure}[!hbt]
\begin{center}
\includegraphics[width=0.85\textwidth]{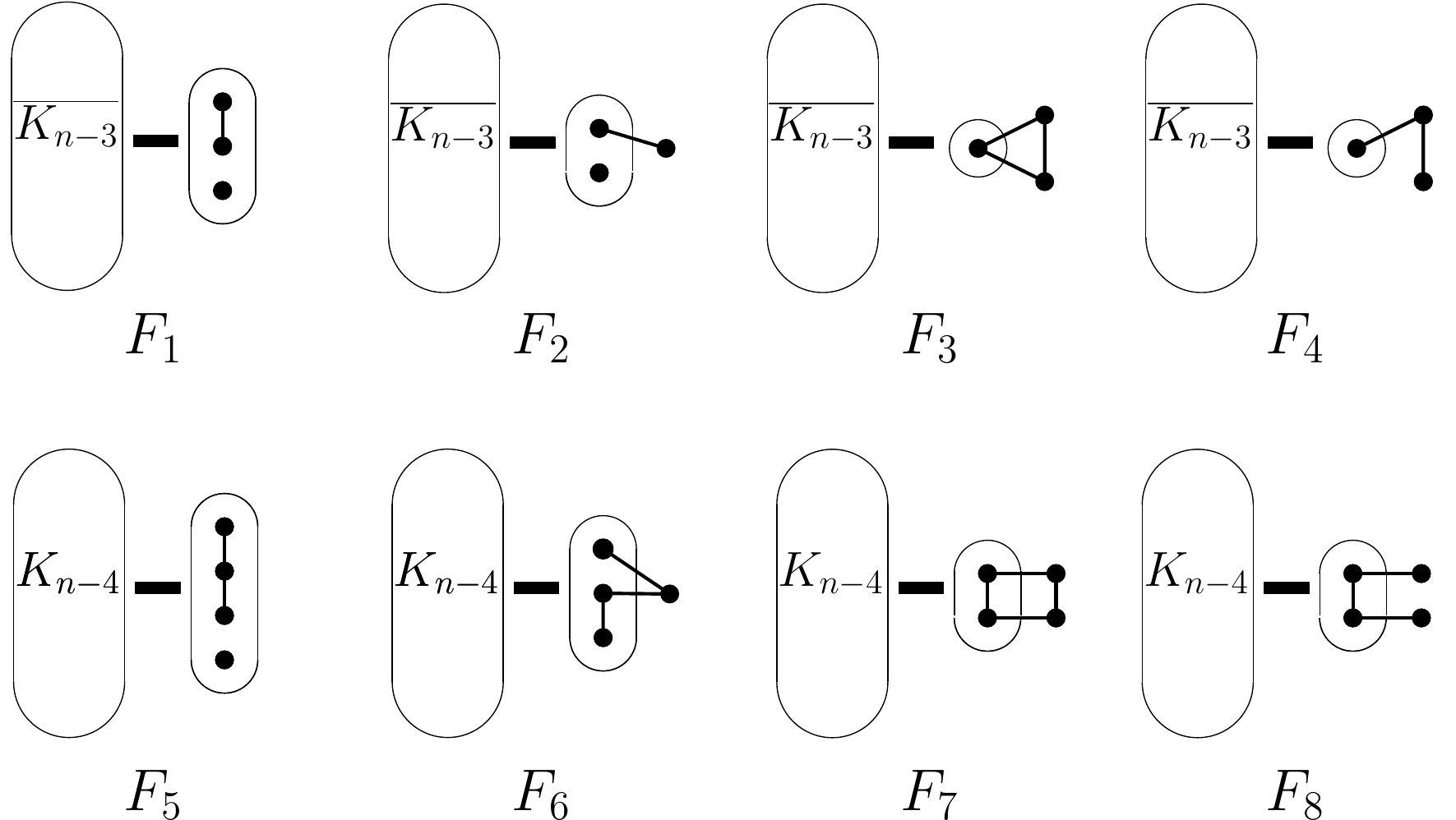}
\caption{ The thick horizontal segment means the join operation $\vee$. 
For example: $F_1\cong \overline{K_{n-3}}\vee (K_2+K_1)$, 
$F_3\cong K_1 \vee(\overline{K_{n-3}}+K_2)$ and 
$F_5\cong \overline{K_{n-4}}\vee (P_3+K_1)$.}\label{pdn-2 wrong}
\end{center}
\end{figure}

\begin{table}[h]
 \begin{center}
 \begin{tabular}{|c|cccccccc|}
  \hline
    Figure \ref{pdn-2 wrong} & $F_1$ & $F_2$ & $F_3$ & $F_4$ & $F_5$  & $F_6$   & $F_7$   & $F_8$ \\
  \hline
    Paper \cite{tom08} & $G_5$ & $K_{2,n-2}-e$ & $K_{1,n-1}+e$ & $G_{11}$ & $K_n-E(K_{1,3}+e)$  & $G_3$   & $G_7$   & $G_{12}$ \\
  \hline
 \end{tabular}
 \end{center}
 \caption{The second row contains the names used in \cite{tom08} for  the graphs shown in Figure\,\ref{pdn-2 wrong}.}
 \label{tab.equivalencias}
\end{table}

Thus, in particular, and according to \cite{tom08}, for every $n\ge9$,  all the graphs $G$ displayed in Figure \ref{pdn-2 wrong} satisfy $\beta_p(G)=n-2$. However, for all of them, it holds that $\beta_p(G)=n-3$, as we will prove in this paper (see Corollaries \ref{f1234}, \ref{f56} and \ref{f78}).
As a matter of example, we show next that $\beta_p(F_1)\le n-3$, for every $n\ge7$.
First, notice that $F_1\cong \overline{K_{n-3}}\vee (K_2+K_1)$.
Next, if $V(\overline{K_{n-3}})=\{v_1, \dots, v_{n-3}\}$, $V(K_2)=\{v_{n-2},v_{n-1}\}$ and  $V(K_1)=\{v_n\}$, then consider the partition
$\Pi=\{ \{v_1,v_{n-2}\}, \{v_2,v_{n-1}\}, \{v_3,v_{n}\}, \{v_4\}, \dots, \{v_{n-3} \} \}$.
Finally, observe that $\Pi$ is a locating partition of $F_1$ since $2=d(v_i,v_{4})\neq d(v_{n+i-3},v_4)=1$, for every $i\in\{1,2,3\}$.

\noindent The main contribution of this work is, after showing that the theorem of characterization presented in \cite{tom08} is far for being true, finding the correct answer to this problem. 
Motivated by this objective, we introduce the so-called \emph{twin number} $\tau(G)$ of a connected graph $G$, and present a list of basic properties, some of them directly related to the partition dimension $\beta_p(G)$.

\noindent The rest of the paper is organized as follows. 
Section 2 is devoted to introduce the notions of twin class and twin number, and to show some basic properties.
In Section 3, subdivided in three subsections, a number of results involving both the twin number and the partition dimension of a graph are obtained. Finally, Section 4 includes a theorem of characterization presenting, for every $n\ge9$, which graphs $G$ of order $n$ satisfy $\beta_p(G)=n-2$.

\section{Twin number}\label{sec.tn}

A pair of vertices $u,v\in V$ of a graph $G=(V,E)$ are  called \emph{twins} if they have exactly the same set of neighbors other than $u$ and $v$. 
A \emph{twin set} of $G$ is any set of pairwise twin vertices of $G$.
If $uv\in E$, then they are called \emph{true twins}, and otherwise \emph{false twins}.
It is easy to verify that the so-called \emph{twin relation} is an equivalence relation on $V$, and that
every equivalence class is either a clique or a stable set.
An equivalence class of the twin relation is referred to as a \emph{twin class}.

\begin{defi}
{\rm The \emph{twin number} of a graph $G$, denoted by $\tau (G)$, is the maximum cardinality of a twin class of $G$.
Every twin set of cardinality $\tau (G)$ will be referred to as a \emph{$\tau$-set}. }
\end{defi}

As a direct consequence of these definitions, the following list of  properties hold.

\begin{prop}\label{twin.list}
 Let $G=(V,E)$ be a graph of order $n$. Let $W$ be a twin set of $G$. Then

 \begin{enumerate} 

 \item[(1)] If $w_1,w_2\in W$, then $d(w_1,z)=d(w_2,z)$, for every vertex $z\in V\setminus \{w_1,w_2\}$.
 
 \item[(2)] No two vertices of $W$ can belong to the same part of any locating partition.

 \item[(3)] $W$ induces either a complete graph or an empty graph. 

 \item[(4)] Every vertex not in $W$ is either adjacent to all the vertices of $W$ or non-adjacent to any vertex of $W$.

 \item[(5)] $W$ is a twin set of $\overline{G}$.

 \item[(6)] $\tau (G)=\tau(\overline{G})$.

 \item[(7)] $\tau(G)\le \beta_p(G)$.

 \item[(8)] $\tau (G)=\beta_p(G)=n$ if and only if $G$ is the complete graph $K_n$.

 \item[(9)] $\tau (G)=n-1$ if and only if $G$ is the star $K_{1,n-1}$.

 \end{enumerate}
\end{prop}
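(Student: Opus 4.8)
The plan is to derive every item from the single defining identity of twins, namely that $u,v$ are twins exactly when $N(u)\setminus\{v\}=N(v)\setminus\{u\}$, and to treat (1) as the foundational metric fact from which (2) and (7) follow. For (1), I would fix $w_1,w_2\in W$ and $z\notin\{w_1,w_2\}$ and show $d(w_2,z)\le d(w_1,z)$, equality then following by symmetry. Take a shortest $w_1$--$z$ path and examine its first edge $w_1x$. If $x\ne w_2$, the identity gives $x\in N(w_2)$, so prepending $w_2$ to the tail of the path yields a $w_2$--$z$ walk of the same length, whence $d(w_2,z)\le d(w_1,z)$. The delicate point is the case $x=w_2$: here I would argue that a shortest path cannot begin $w_1,w_2,\dots$, since the vertex following $w_2$ would be a neighbor of $w_2$, hence (by the identity applied to the true-twin pair $w_1,w_2$) a neighbor of $w_1$, producing a strictly shorter $w_1$--$z$ path, a contradiction. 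This case analysis is the only spot where real care is needed.

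Items (2) and (7) are then immediate. If $w_1,w_2$ lay in the same part $S_i$ of a locating partition $\Pi$, then by (1) every coordinate $d(\cdot,S_j)$ would coincide for $w_1$ and $w_2$ (the coordinate $j=i$ equals $0$ for both, and for $j\ne i$ no element of $S_j$ is $w_1$ or $w_2$), so $r(w_1|\Pi)=r(w_2|\Pi)$, contradicting that $\Pi$ locates; hence the vertices of a $\tau$-set occupy pairwise distinct parts, forcing $\beta_p(G)\ge\tau(G)$.

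For the structural statements (3)--(6) I would again push the identity around. For (4), if some $z\notin W$ is adjacent to one $w_1\in W$, then for any other $w_2\in W$ we get $z\in N(w_1)\setminus\{w_2\}=N(w_2)\setminus\{w_1\}$, so $z$ is adjacent to $w_2$, giving the dichotomy. For (3) I would show that if $W$ contains even one edge $ab$ then $W$ is complete: first every $c\in W$ is adjacent to $a$ (apply the identity to the pair $b,c$ with $a\in N(b)$), and then for arbitrary $x,y\in W$ the identity for the pair $x,a$ together with $y\in N(a)$ yields $xy\in E$; since the remaining alternative is that $W$ has no edge at all, $W$ is complete or empty. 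Part (5) is a direct computation: $N_{\overline G}(u)\setminus\{v\}=V\setminus(N_G(u)\cup\{u,v\})$, which is symmetric in $u,v$ precisely when $N_G(u)\setminus\{v\}=N_G(v)\setminus\{u\}$, so the twin relation is the same in $G$ and $\overline G$, and (6) follows because the two graphs then share the same twin classes.

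Finally (8) and (9) combine the above with connectivity. For (8), the hypothesis $\tau(G)=n$ makes $V$ itself a twin set, so by (3) $G$ is complete or empty; connectedness rules out the empty graph for $n\ge2$, giving $G=K_n$, while the converse is a short check that in $K_n$ all vertices are pairwise true twins and that $r(u|\Pi)$ depends only on the part of $u$, forcing the all-singletons partition and hence $\beta_p=n$. For (9), take a $\tau$-set $W$ of size $n-1$ and let $z$ be the remaining vertex; connectedness together with (4) forces $z$ to be adjacent to all of $W$, and (3) leaves two cases: if $W$ is complete then $G=K_n$ and $\tau(G)=n$ by (8), a contradiction, so $W$ is independent and $G\cong K_{1,n-1}$, the converse being immediate. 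The main obstacle throughout is confined to the shortest-path bookkeeping in (1); once that metric fact is secured, the rest is essentially formal manipulation of the twin identity under the standing assumption that $G$ is connected.
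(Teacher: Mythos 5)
Your proof is correct. The paper states this proposition without proof, as ``a direct consequence of these definitions,'' and your arguments correctly supply the routine verifications -- including the one genuinely non-trivial point, the case in item (1) where a shortest $w_1$--$z$ path would start with the edge $w_1w_2$, which you rightly rule out by shortcutting through the vertex following $w_2$.
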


It is a routine exercise to check all the results showed in Table \ref{tab.cartProductPaths}
 (see also \cite{ChaSaZh00} and the references given in \cite{chmppsw07}).

\begin{table}[h]
 \begin{center}
 \begin{tabular}{|c|cccccc|}
  \hline
   $G$&$P_n$ &$C_{n}$ & $K_{1,n-1}$ & $K_{k,k}$ & $K_{k,n-k}$  & $K_n$  \\
	{\rm order} $n$ & $n\ge 4$ & $n\ge  5$  & $n\ge 3$ & $4\le n=2k$ & $2\le k < n-k$  & $n\ge 2$   \\	
  \hline
 $\beta(G)$  & 1  &  2  &$n-2$  & $n-2$ & $n-2$ &  $n-1$\\
 $\tau(G)$& 1  &  1  &$n-1$  & $k$ & $n-k$ & $n$ \\
 $\beta_p(G)$& 2  &  3  &$n-1$  & $k+1$ & $n-k$ & $n$ \\
  \hline
 \end{tabular}
 \end{center}
 \caption{Metric dimension $\beta$,  twin number $\tau$ and partition dimension $\beta_p$ of  paths, cycles, stars, bicliques and cliques.}
 \label{tab.cartProductPaths}
\end{table}

\begin{figure}[!hbt]
\begin{center}
\includegraphics[width=0.95\textwidth]{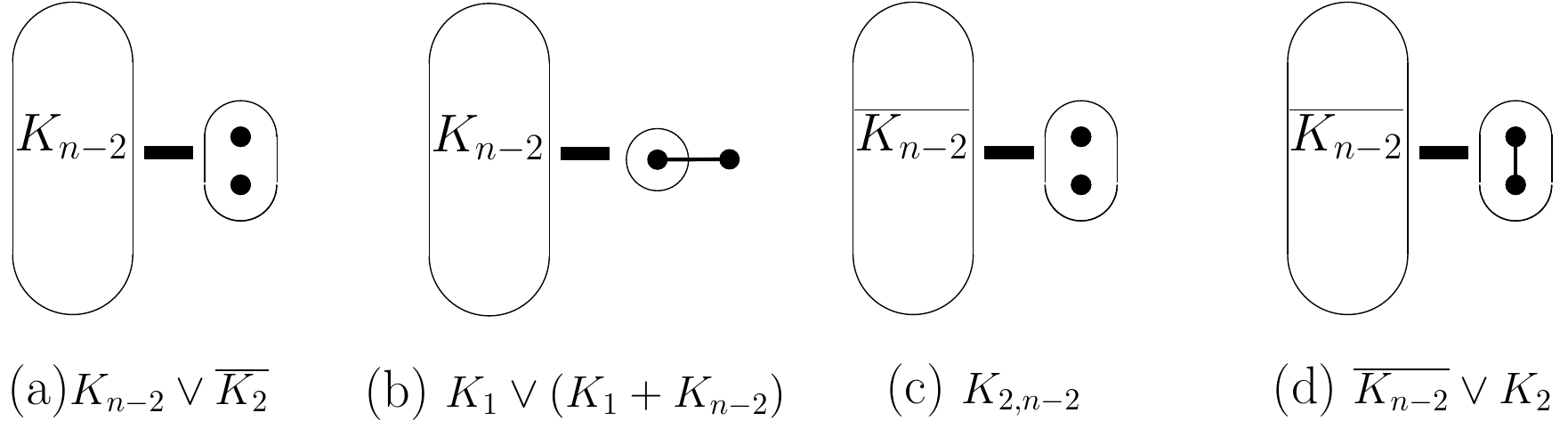}
\caption{Graphs of order $n\ge4$ such that $\tau(G)=n-2$.}\label{pdn-1}
\label{bpn-2}
\end{center}
\end{figure}

We conclude this section by characterizing the set of graphs $G$ such that $\tau (G)=n-2$.

\begin{prop}\label{prop.twingran}
 Let $G=(V,E)$ be a graph of order $n\ge 4$. 
 Then, $\tau (G)=n-2$ if and only if $G$ is one of the following graphs (see Figure \ref{bpn-2}):

   \begin{enumerate}

   \item[{\rm(a)}] the complete split graph $K_{n-2} \vee \overline{K_2}$, obtained by removing an edge from the complete graph $K_n$;
	
   \item[{\rm(b)}] the graph $K_1\vee (K_1+K_{n-2})$, obtained by attaching a leaf to the complete graph $K_{n-1}$;
	
   \item[{\rm(c)}] the complete bipartite graph $K_{2,n-2}$;
	
   \item[{\rm(d)}] the complete split graph $\overline{K_{n-2}} \vee K_2 $.
	
   \end{enumerate}

\end{prop}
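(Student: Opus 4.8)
The plan is to reduce the characterization to a short, finite case analysis governed by the local structure around a $\tau$-set. Suppose $\tau(G)=n-2$ and fix a $\tau$-set $W$, so that $|W|=n-2$ and exactly two vertices $x,y$ lie outside $W$. By Proposition \ref{twin.list}(3), $W$ induces either a clique or an empty graph, and by Proposition \ref{twin.list}(4) each of $x$ and $y$ is adjacent either to all of $W$ or to none of it. Hence the isomorphism type of $G$ is completely determined by four binary parameters: whether $W$ is a clique or a stable set, whether $x$ is joined to $W$, whether $y$ is joined to $W$, and whether $xy\in E$. This yields at most $16$ candidate graphs, which I would organize using the symmetry that exchanges $x$ and $y$ together with the clique/stable duality provided by Proposition \ref{twin.list}(6).

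For the forward direction I would impose two filters on these candidates. First, connectivity: any configuration in which $x$ or $y$ is adjacent to nothing, or in which $\{x,y\}$ forms a component separate from $W$, is disconnected and discarded. Second, maximality of the twin class: since $W$ is a $\tau$-set, neither $x$ nor $y$ may be a twin of the vertices of $W$, for otherwise the class containing $W$ would have size $n-1$ or $n$. I would make this precise by writing out $N(x)$ and $N(w)$ for $w\in W$ and comparing $N(x)\setminus\{w\}$ with $N(w)\setminus\{x\}$; the resulting condition for ``$x$ is a twin of $W$'' is simply that the adjacency of $x$ to $W$ match the clique/stable type of $W$ while the edge $xy$ match the adjacency of $y$ to $W$ (and symmetrically for $y$). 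Applying both filters should leave exactly four surviving configurations, which I would then identify with the graphs (a)--(d) of Figure \ref{bpn-2}: the clique case with both outside vertices joined to $W$ and $xy\notin E$ gives $K_{n-2}\vee\overline{K_2}$; the clique case with one outside vertex joined and $xy\in E$ gives $K_1\vee(K_1+K_{n-2})$; and the two stable cases with both vertices joined give $K_{2,n-2}$ (when $xy\notin E$) and $\overline{K_{n-2}}\vee K_2$ (when $xy\in E$). The degenerate configurations removed by the filters are exactly $K_n$ and $K_{1,n-1}$, consistent with Proposition \ref{twin.list}(8) and (9).

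For the converse I would verify each of (a)--(d) directly: exhibit the evident twin class $W$ of size $n-2$, check connectivity, and confirm, using the twin test above, that neither remaining vertex can be adjoined to $W$, so that the maximum twin class has size exactly $n-2$.

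The main obstacle I anticipate is not conceptual but one of bookkeeping: ensuring the case analysis is genuinely exhaustive and that the twin test for the two outside vertices is applied correctly in every branch. A subtle point worth flagging is the role of the hypothesis $n\ge4$: it guarantees $|W|=n-2\ge2$, so that $W$ contains at least two vertices whose common behaviour pins down the parameters, and it ensures $W\setminus\{w\}$ is nonempty, which is precisely what makes the clique/stable alternative detectable in the twin test. For smaller $n$ this detection can fail and the clean four-graph answer would break down.
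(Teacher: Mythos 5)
Your proposal is correct and follows essentially the same route as the paper's proof: fix the $\tau$-set $W$ of size $n-2$, use Proposition \ref{twin.list}(3)--(4) to reduce to a finite case analysis on the clique/stable type of $G[W]$ and the adjacencies of the two outside vertices, and discard the configurations that are disconnected or force a twin class of size $n-1$ or $n$ (i.e.\ $K_{1,n-1}$ and $K_n$). The only difference is organizational --- the paper uses connectivity to assume $W\subseteq N(x)$ up front rather than enumerating all sixteen configurations --- and your observation about where $n\ge 4$ enters (making $W\setminus\{w\}$ nonempty so the clique/stable type is detectable in the twin test) is a sound refinement of the same argument.
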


\begin{proof} 
It is straightforward to check that the twin number of the four  graphs displayed in Figure \ref{bpn-2} is $n-2$. 
Conversely, suppose that  $G$ is a graph such that $\tau (G)=n-2$.
Let $x,y\in V$  such that $W=V\setminus \{x,y\}$ is the $\tau$-set of $G$. 
Since $G$ is connected, we may suppose  without loss of generality that $W\subseteq N(x)$.
We distinguish two cases.

\noindent \textbf{Case 1}: $G[W] \cong K_{n-2}$.
If $xy\notin E$, then $N(y)=W$, and thus $G\cong \overline{K_2}\vee K_{n-2}$.
If $xy\in E$, then $N(y)=\{x\}$, as otherwise $G\cong K_{n}$, a contradiction.
Thus, $G\cong K_1\vee (K_{n-2}+K_1)$.

\noindent \textbf{Case 2}: $G[W] \cong \overline{K_{n-2}}$.
If $xy\notin E$, then $N(y)=W$, and thus $G\cong K_{2,n-2}$.
If $xy\in E$, then $N(y)=W$, as otherwise $G\cong K_{1,n-1}$, a contradiction. 
Hence, $G\cong \overline{K_{n-2}} \vee K_2 $.
\end{proof}

\section{Twin number versus partition dimension}\label{sec.tnmd}

This section, consisting of 3 subsections, is devoted to obtain relations between the partition dimension $\beta_p(G)$ and the twin number $\tau(G)$  of a graph $G$. 
In the first subsection, a  realization theorem involving both parameters is presented, without any further restriction than the inequality $\tau(G) \le \beta_p(G)$.
The second subsection is devoted to study the parameter $\beta_p(G)$, when $G$ is a graph of order $n$ with "few" twin vertices, to be more precise, such that $\tau(G)\le \frac{n}{2}$.
Finally, the last subsection examines  $\beta_p(G)$, whenever $G$ is a graph for which $\tau(G)> \frac{n}{2}$.

\subsection{Realization Theorem for trees}

A complete $k$-ary tree of height $h$ is a rooted tree whose internal vertices have $k$ children and whose leaves are at distance $h$
from the root. 
Let $T(k,2)$ denote the complete $k$-ary tree of height 2. 
Suppose that  $x$ is the root, $x_1,\dots ,x_k$ are the children of $x$, and $x_{i1},\dots ,x_{ik}$ are the children of $x_i$ for any $i\in \{ 1,\dots ,k\}$  (see Figure \ref{fig:kary}(a)).

\begin{prop}\label{prop.kary}
For any integer $k\ge 2$, $\tau (T(k,2))=k$ and
$\beta_p(T(k,2))=k+1$.
\end{prop}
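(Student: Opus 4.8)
The plan is to handle the two equalities separately: first pin down the twin number, then trap the partition dimension between a matching lower and upper bound. Throughout I use that $T(k,2)$ has $n=1+k+k^2$ vertices, partitioned by type into the root $x$, the $k$ internal vertices $x_1,\dots,x_k$, and the $k^2$ leaves $x_{ij}$. For $\tau(T(k,2))=k$ I would argue that the only large twin classes are the leaf-groups: for fixed $i$, all of $x_{i1},\dots,x_{ik}$ share the single neighbor $x_i$, so each group is a false twin class of size $k$. Nothing larger can occur, as one checks by comparing neighborhoods: two internal vertices $x_i,x_{i'}$ have distinct leaf-children, the root has the unique neighborhood $\{x_1,\dots,x_k\}$, and vertices of different types have different degrees. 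Hence $\tau(T(k,2))=k$.

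For the lower bound $\beta_p(T(k,2))\ge k+1$, Proposition \ref{twin.list}(7) already gives $\beta_p\ge\tau=k$, so it suffices to rule out a locating partition $\Pi=\{S_1,\dots,S_k\}$ with exactly $k$ parts. By Proposition \ref{twin.list}(2), no two vertices of a leaf-group lie in a common part, so each of the $k$ groups places exactly one leaf in every part. I would then record the distances from a leaf $x_{ij}$: namely $1$ to its parent $x_i$, $2$ to the root and to its siblings, $3$ to each other $x_{i'}$, and $4$ to every remaining leaf. Now examine where the parents $x_1,\dots,x_k$ and the root sit, splitting into two cases. If two parents $x_i,x_{i'}$ lie in a common part $S_a$, pick any other part $S_m$; its group-$i$ and group-$i'$ leaves both record $0$ in $S_m$, $1$ in $S_a$ (via their respective parents), and $2$ in every other part (via a sibling leaf), hence identical distance vectors, a contradiction. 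Otherwise the $k$ parents occupy the $k$ parts bijectively, forcing the root to share a part $S_c$ with the parent $x_c$; then $x$ and $x_c$ both record $0$ in $S_c$ and $1$ in each other part $S_l$ (for $x$ via the parent resident in $S_l$, for $x_c$ via the group-$c$ leaf resident in $S_l$), so they collide. Either way $\Pi$ fails, giving $\beta_p\ge k+1$.

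For the upper bound I would exhibit the explicit partition with parts $S_j=\{x_j,x_{1j},x_{2j},\dots,x_{kj}\}$ for $j=1,\dots,k$ together with the singleton $S_{k+1}=\{x\}$, and invoke the remark that only same-part pairs need be checked. The singleton $S_{k+1}$ separates, inside each $S_j$, the parent $x_j$ (at distance $1$ from the root) from every leaf (at distance $2$); and two leaves $x_{ij},x_{i'j}$ of different groups in $S_j$ are separated because their parents $x_i,x_{i'}$ lie in the distinct parts $S_i,S_{i'}$, so one leaf sees distance $1$ to its parent's part while the other sees only a sibling at distance $2$ there. This makes $\Pi$ a locating partition with $k+1$ parts, so $\beta_p\le k+1$, and combined with the lower bound the equality follows.

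I expect the main obstacle to be the distance bookkeeping in the lower bound: one must confirm that the two colliding vectors agree in \emph{every} coordinate, and in particular that these collisions persist no matter where the root is placed. Once the four-valued distance profile of a leaf is written down explicitly, however, both cases should close cleanly.
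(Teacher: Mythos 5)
Your proof is correct and follows essentially the same route as the paper's: the same explicit $(k+1)$-part partition for the upper bound, and the same two-case analysis for the lower bound (two parents sharing a part, or the root sharing a part with its own child $x_c$). The only cosmetic difference is that in the first case you derive the contradiction from two colliding leaves rather than from the two parents themselves having the identical vector $(1,\dots,1,0,1,\dots,1)$.
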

\begin{proof}
Certainly, $\tau (T(k,2))=k$, and thus $\beta_p(T(k,2))\ge k$.
Suppose that $\beta_p(T(k,2))=k$ and $\Pi=\{ S_1,\dots ,S_k \}$ is a locating partition of size $k$. 
In such a case,  for every $i\in \{ 1,\dots ,k\}$ the vertices $x_{i1},\dots ,x_{ik}$ are twins, and thus each one belongs to a distinct part of $\Pi$. 
So, if $x_r,x_s\in S_i$ for some pair $r,s\in \{1,\dots ,k\}$, with $r\not=s$, then $r(x_r|\Pi)=r(x_s|\Pi)=(1,\dots,1,\underset{i)}0,1\dots ,1)$, which is a contradiction.
Hence, the vertices $x_1,\dots ,x_k$ must belong to distinct parts of $\Pi$. 
We may assume that $x_i\in S_i$, for every $i\in \{ 1,\dots ,k\}$. 
Thus, if $x$ belongs to the part $S_i$, then $r(x|\Pi)=r(x_i|\Pi)=(1,\dots,1,\underset{i)}0,1\dots ,1)$, which is a contradiction.
Hence, $\beta_p(T(k,2))\ge k+1$.
Finally, consider the partition $\Pi=\{ S_1,\dots ,S_k, S_{k+1} \}$ such that $S_{k+1}=\{ x \}$ and, for any $i\in \{ 1,\dots ,k\}$,
$S_i=\{ x_i,x_{1i},x_{2i},\dots ,x_{ki}\}$.
Then, for every $u\in V(T(k,2))$ and for every $i,j,h\in \{ 1,\dots , k\}$ such that $j<i<h$:

		\begin{center}
$r(u|\Pi) = 
  \begin{cases} 
	 (2,\ldots,2,\overset{j)}{1}, 2,\ldots,2,\overset{i)}{0},2,\ldots,2,\overset{h)}{2},2,\ldots,2,2) & \text{if } u=x_{ij}\\
   (2,\ldots,2,\overset{}{2}, \hspace{.1cm}2,\ldots,2,\overset{}{0},\hspace{.03cm}2,\ldots,2,\overset{}{2},\hspace{.07cm}2,\ldots,2,2) & \text{if } u=x_{ii} \\
	 (2,\ldots,2,\overset{}{2}, \hspace{.1cm}2,\ldots,2,\overset{}{0},\hspace{.03cm}2,\ldots,2,\overset{}{1},\hspace{.07cm}2,\ldots,2,2) & \text{if } u=x_{ih} \\
   (1,\ldots,1,\overset{}{1}, \hspace{.1cm}1,\ldots,1,\overset{}{0},\hspace{.03cm}1,\ldots,1,\overset{}{1},\hspace{.07cm}1,\ldots,1,1) & \text{if } u=x_i 
  \end{cases}$ 
\end{center}

Therefore, $\Pi$ is a locating partition, implying that $\beta_p(T(k,2))= k+1$.
\end{proof}

\begin{figure}[ht]
\begin{center}
\includegraphics[width=1\textwidth]{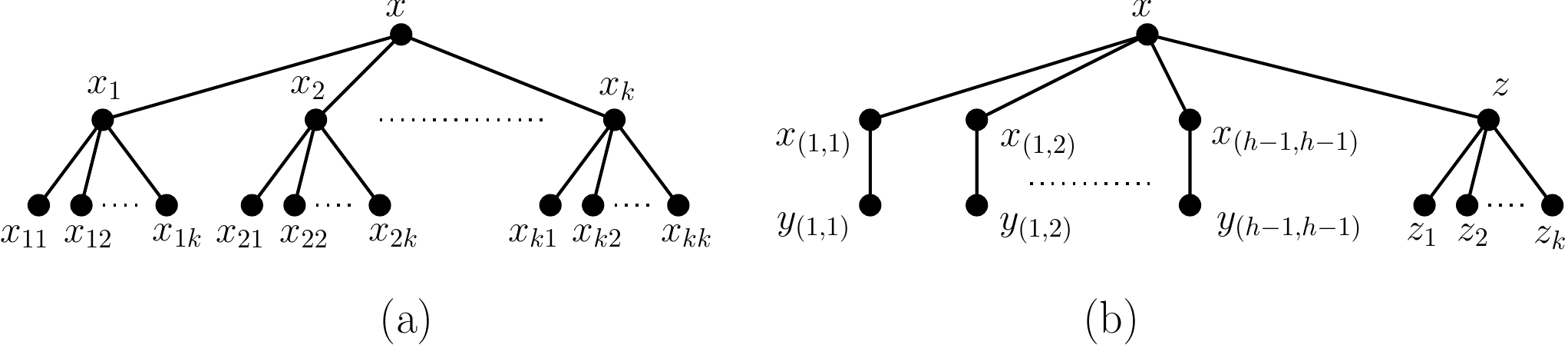}
\caption{The trees (a) $T(k,2)$ and (b) $T^*(k,h)$.}
\label{fig:kary}
\end{center}
\end{figure}

For any $k\ge 1$ and $h\ge 1$, let  $T^*(k,h)$ denote the tree of order $k+2+2(h-1)^2$ defined as follows 
(see Figure \ref{fig:kary}(b)):
$$V(T^*(k,h))= \{ x,z \} \cup \{ z_1,\dots ,z_k\} \cup \{ x_{(i,j)}: 1\le i,j\le h-1 \} \cup \{ y_{(i,j)}: 1\le i,j\le h-1 \},$$
$$E(T^*(k,h))= \{ x x_{(i,j)} : 1\le i,j\le h-1\}\cup \{ x_{(i,j)}y_{(i,j)}: 1\le i,j\le h-1  \} \cup \{ xz, zz_1,\dots ,zz_k\}.$$

\begin{prop}\label{prop.tab}
Let $k,h$ be integers such that $k\ge 1$ and $h\ge k+2$. Then, $\tau (T^*(k,h))=k$ and $\beta_p(T^*(k,h))=h$.
\end{prop}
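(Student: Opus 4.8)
The plan is to treat the twin number by inspection and then prove the two inequalities $\beta_p(T^*(k,h))\ge h$ and $\beta_p(T^*(k,h))\le h$ separately. For $\tau$, I would simply identify the twin classes: the leaves $z_1,\dots,z_k$ (all attached to $z$) form a twin class of size $k$, whereas every leaf $y_{(i,j)}$ and every inner vertex $x_{(i,j)}$ sits in a singleton class (their attaching vertices are pairwise distinct) and $x,z$ lie in no larger class; hence $\tau(T^*(k,h))=k$. Note that Proposition~\ref{twin.list}(7) only gives $\beta_p\ge k$, which is far below $h$, so the real work is the partition dimension. Here I would use that $\mathrm{diam}(T^*(k,h))=4$, so every distance coordinate lies in $\{0,1,2,3,4\}$.

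For the lower bound, let $\Pi=\{S_1,\dots,S_t\}$ be a locating partition and, for each ``stick'' $(i,j)$, let $\phi(i,j)$ be the ordered pair of indices of the parts containing $x_{(i,j)}$ and $y_{(i,j)}$. First I would show $\phi$ is injective: if $\phi(i,j)=\phi(i',j')$ with $(i,j)\neq(i',j')$, the involution $\sigma$ swapping the two sticks (i.e. $x_{(i,j)}\leftrightarrow x_{(i',j')}$ and $y_{(i,j)}\leftrightarrow y_{(i',j')}$, fixing all else) is a graph automorphism; since both swapped inner vertices lie in the same part and both swapped leaves lie in the same part, $\sigma$ fixes every part setwise, whence $d(x_{(i,j)},S)=d(x_{(i',j')},S)$ for all $S\in\Pi$, contradicting that $\Pi$ is locating (the same distance-preserving mechanism behind Proposition~\ref{twin.list}). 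Injectivity yields $(h-1)^2\le t^2$, i.e. $t\ge h-1$.

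The main obstacle is then to exclude $t=h-1$. In that case $\phi$ is a bijection onto $\{1,\dots,h-1\}^2$, so every part contains some inner vertex $x_{(a,b)}$; consequently each inner vertex $x_{(i,j)}\in S_a$ reaches at distance exactly $2$ every part it does not reach at distance $0$ or $1$, and its distance-$1$ parts are precisely those containing $x$ or its own leaf $y_{(i,j)}$. Thus $r(x_{(i,j)}\mid\Pi)$ is determined by $a$ together with the indices $c(x)$ and $c(y_{(i,j)})$ of the parts containing $x$ and $y_{(i,j)}$ (with the convention that the own part contributes distance $0$, not $1$). Now set $c=c(x)$ and choose any $a\neq c$ (possible since $t=h-1\ge k+1\ge2$). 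By surjectivity both pairs $(a,a)$ and $(a,c)$ are realised, and the two corresponding inner vertices both lie in $S_a$ with distance-$1$ part-set exactly $\{c\}$; hence they have identical distance vectors, a contradiction. Therefore $t\ge h$.

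For the upper bound I would exhibit an explicit locating partition with $h$ parts: put $x_{(i,j)}\in S_i$ and $y_{(i,j)}\in S_j$ for $1\le i,j\le h-1$, put $x,z\in S_h$, and put $z_\ell\in S_\ell$ for $1\le\ell\le k$; the hypothesis $h\ge k+2$ is exactly what lets the twins $z_1,\dots,z_k$ occupy distinct parts. Since two vertices in different parts automatically differ in the coordinate indexing their own part, only within-part collisions need checking, and these reduce to a short computation of the four vector types: inner and leaf vertices are separated at coordinate $h$ (values $1$ versus $2$); inner (resp. leaf) vertices sharing a part differ in the position of the coordinate recording their leaf's (resp. inner partner's) part; and each $z_\ell$ is separated from the stick vertices of $S_\ell$ because it has value $1$ at coordinate $h$ together with some coordinate equal to $3$ (using again $h\ge k+2$, which likewise separates $x$ from $z$ inside $S_h$). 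This makes $\Pi$ locating, so $\beta_p(T^*(k,h))\le h$, and combined with the lower bound $\beta_p(T^*(k,h))=h$.
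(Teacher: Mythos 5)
Your proposal is correct and follows essentially the same route as the paper: the same inspection of twin classes, the same injective ``stick-to-pair-of-parts'' counting giving $t\ge h-1$, the same bijectivity argument with the two sticks $\phi=(a,a)$ and $\phi=(a,c)$ (where $c$ is the part of $x$) to rule out $t=h-1$, and the identical explicit $h$-part partition for the upper bound. No gaps; the automorphism phrasing of the injectivity step and the coordinate-by-coordinate verification are just minor presentational variants of the paper's argument.
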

\begin{proof}
Certainly, $\tau (T^*(k,h))=k$.
Let  $\beta_p(T^*(k,h))=t$.

Next,  we show that $t\ge h$. 
Let $\Pi=\{ S_1,\dots ,S_t\}$ be a locating partition of $T^*(k,h)$.
If there exist two distinct pairs $(i,j)$ and $(i',j')$ such that the vertices  $x_{(i,j)},x_{(i',j')}$ are in the same part of $\Pi$ and $y_{(i,j)},y_{(i',j')}$  are in the same part, then $r(x_{(i,j)}|\Pi)=r(x_{(i',j')}|\Pi)$, which is a contradiction.
Notice that this tree contains $(h-1)^2$ pairs of vertices of the type $(x_{(i,j)},y_{(i,j)})$
and if $t\le h-2$, we achieve at most $(h-2)^2$ such pairs avoiding the preceding condition.
Thus, $t\ge h-1$.
Moreover, if  $t= h-1$, then for every pair $(m,n)\in \{ 1,\dots ,h-1\}^2$, there exists a pair $(i,j)\in \{ 1,\dots ,h-1\}^2$ such that  
$x_{(i,j)}\in S_m$ and 
$y_{(i,j)}\in S_n$.
So, by symmetry, we may assume without loss of generality that $x\in S_1$. Consider the vertices $x_{(i,j)},y_{(i,j)},x_{(i',j')},y_{(i',j')}$ such that $x_{(i,j)}\in S_2$, $y_{(i,j)}\in S_1$ and $x_{(i',j')}\in S_2$, $y_{(i',j')}\in S_2$.
Then $r(x_{(i,j)}|\Pi)=r(x_{(i',j')}|\Pi)=(1,0,2,\dots,2)$, which is a contradiction.
Hence, $t\ge h$.

To prove the equality $t= h$, consider the partition $\Pi=\{S_1,\dots ,S_h\}$ such that:
$$\left.
       \begin{array}{ll}
S_i=\{ x_{(i, m)} : 1\le m \le h-1\} \cup \{  y_{(n,i)} : 1\le n\le h-1 \}\cup \{ z_i\},  \phantom{i}\hbox{ if $1\le i \le k$}\\
S_i=\{ x_{(i, m)} : 1\le m \le h-1\} \cup \{  y_{(n,i)} : 1\le n\le h-1 \},  \phantom{xxxxxi}\hbox{ if $k< i \le h-1$} &\\
S_h=\{ x,z \}. &
\end{array}
\right.
$$
Let $i\in \{ 1,\dots , h-1\}$. Then, for every $m,n\in  \{ 1,\dots , h-1\}$, $m,n\not= i$:
\begin{center}
$r(u|\Pi) =
  \begin{cases}
    (2,\ldots,2,\overset{i)}{0},2,\ldots,2,\overset{m)}{1},2,\ldots,2,1) & \text{if } u=x_{(i,m)}\\
   (2,\ldots,2,{0},2,\ldots,2,{\, 2\,},2,\dots,2,1) & \text{if } u=x_{(i,i)}\\
\end{cases}$
$r(u|\Pi) =
  \begin{cases}
    (3,\ldots,3,\overset{i)}{0},3,\ldots,3,\overset{n)}{1},3,\dots,3,2) & \text{if } u=y_{(n,i)}\\
    (3,\ldots,3,{0},3,\ldots,3,{3},3,\dots,3,2) & \text{if } u=y_{(i,i)}
   \end{cases}$
\end{center}
Therefore, $r(u,|\Pi)\not= r(v|\Pi)$ if  $u,v\in \{ x_{(i, m)} : 1\le m \le h-1\} \cup \{  y_{(n,i)} : 1\le n\le h-1 \}$ and $u\not= v$.
Moreover,  it is straightforward to check that, if $i\in \{ 1,\dots , k\}$, then for every  $u\in S_i$, $u\not= z_i$, we have
   $$r(z_i|\Pi)=(2,\ldots,2,\overset{i)}{0},2,\ldots,\overset{k)}{2},3,\dots,3,1)\not= r(u|\Pi)\, .$$
Finally,  for $x,z\in S_h$, we have
$$r(x|\Pi)=(1,\ldots,\overset{k)}{1},1,\ldots,1,0)\not= (1,\ldots,\overset{k)}{1},2,\dots,2,0)=r(z|\Pi).$$
Therefore, $\Pi$ is a locating partition, 
implying that $\beta_p(T^*(k,h))=h$.
\end{proof}

\begin{thm}
Let $a,b$ be integers such that $1\le a\le b$. Then, there exists a tree $T$ such that $\tau (T)=a$ and $\beta_p(T)=b$.
\end{thm}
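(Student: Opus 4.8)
The plan is to construct an explicit tree realizing any prescribed pair $(a,b)$ with $1\le a\le b$, by combining the two families already analyzed in Propositions \ref{prop.kary} and \ref{prop.tab}. The key observation is that these two families cover complementary regimes of the relation between $\tau$ and $\beta_p$: the complete $k$-ary tree $T(k,2)$ gives $\tau=k$, $\beta_p=k+1$, so it realizes the diagonal-adjacent case $b=a+1$, while the family $T^*(k,h)$ gives $\tau=k$ and $\beta_p=h$ for every $h\ge k+2$, so it realizes all pairs with $b\ge a+2$. Thus the only pair not immediately covered is the equality case $a=b$.

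First I would dispose of the equality case $a=b$. For $a=b=1$, any path $P_n$ with $n\ge4$ has $\tau=1$ and $\beta_p=2$, so that does not work; instead I would take the path $P_3$ or a small path, but more robustly I would note from Table \ref{tab.cartProductPaths} that the complete graph behaves differently, so for the tree setting I would exhibit a direct construction: a \emph{star} $K_{1,n-1}$ has $\tau=n-1$ and $\beta_p=n-1$ by Proposition \ref{twin.list}(9) together with Table \ref{tab.cartProductPaths}, realizing the pair $(a,a)$ with $a=n-1\ge2$. The remaining equality pair $(1,1)$ is realized by the trivial tree $K_1$ (or $K_2$), for which $\tau=1=\beta_p$ after checking the small-order convention. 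So the equality case is handled by stars.

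Next I would treat the strict case $a<b$. If $b=a+1$, then setting $k=a$ the tree $T(a,2)$ satisfies $\tau=a$ and $\beta_p=a+1=b$ by Proposition \ref{prop.kary}, provided $a\ge2$; for $a=1$, $b=2$, a path $P_4$ gives $\tau=1$, $\beta_p=2$. If $b\ge a+2$, then setting $k=a$ and $h=b$ the hypothesis $h\ge k+2$ of Proposition \ref{prop.tab} is exactly satisfied, and that proposition yields $\tau(T^*(a,b))=a$ and $\beta_p(T^*(a,b))=b$, as desired.

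The main obstacle is simply the bookkeeping of boundary cases, namely $a=1$ and the equality line $a=b$, where the generic constructions $T(k,2)$ (which requires $k\ge2$) and $T^*(k,h)$ (which requires $h\ge k+2$) do not apply; these must be patched with small explicit trees (paths and stars). Once these low-dimensional exceptions are verified directly, the bulk of the range $(a,b)$ is covered verbatim by the two preceding propositions, so no new hard estimate is needed and the argument reduces to a case split on the value of $b-a$.
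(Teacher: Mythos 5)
Your proof is correct and follows essentially the same route as the paper: a case split on $b-a$, taking the trivial tree for $(1,1)$, stars for $a=b\ge 2$, the path $P_4$ for $(1,2)$, the tree $T(a,2)$ of Proposition \ref{prop.kary} for $b=a+1$ with $a\ge 2$, and the tree $T^*(a,b)$ of Proposition \ref{prop.tab} for $b\ge a+2$. One small caveat: drop the parenthetical alternative $K_2$ for the pair $(1,1)$, since its two vertices are true twins and hence $\tau(K_2)=\beta_p(K_2)=2$; the trivial tree $K_1$ (the paper's $P_1$) is the correct choice there.
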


\begin{proof}
For $a=b=1$, the trivial graph $P_1$ satisfies $\tau (P_1)=\beta_p(P_1)=1$. 
For $a=b\ge 2$, consider the star $K_{1,a}$.
For $a=1$ and $b=2$, take the path $P_4$.
If $2\le a$ and $b=a+1$, consider the tree $T(a,2)$ studied in Proposition \ref{prop.kary}.
Finally, if $a \ge 1$ and $b\ge a+2$, take the tree $T^*(a,b)$ analyzed in Proposition \ref{prop.tab}.
\end{proof}

\subsection{Twin number at most half the order}

In this subsection, we approach the case when $G$ is a graph of order $n$ such that $\tau (G)=\tau \le \frac{n}{2}$. 
Concretely, we prove that, in such a case, $\beta_p(G)\le n-3$.

\begin{lem}\label{lem.completempty}
Let $D$ be a subset of vertices of size $k\ge 3$  of a graph $G$ such that $G[D]$ is neither complete nor empty. 
Then, there exist at least three different vertices $u,v,w\in D$ such that $uv\in E(G)$ and $uw\notin E(G)$
\end{lem}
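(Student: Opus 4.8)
The plan is to reformulate the conclusion in a way that makes distinctness automatic. Observe that the three required vertices all issue from a single \emph{pivot} vertex $u$: what we really need is one vertex $u\in D$ that simultaneously has a neighbor $v\in D$ and a non-neighbor $w\in D$. Indeed, given such a $u$, the triple $(u,v,w)$ immediately satisfies the statement, and the three vertices are distinct, since $v\neq u$ and $w\neq u$ by definition, while $v\neq w$ because $uv\in E(G)$ but $uw\notin E(G)$. Thus the whole lemma reduces to producing one vertex of $D$ that is adjacent neither to all other vertices of $D$ nor to none of them.

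To find such a pivot, I would first record the two hypotheses concretely. Because $G[D]$ is not empty there is an edge $ab\in E(G)$ with $a,b\in D$; because $G[D]$ is not complete there is a non-edge, that is, a pair $c,d\in D$ with $cd\notin E(G)$. Now I would examine the endpoint $a$ and split into two cases. If $a$ also has a non-neighbor $w\in D\setminus\{a\}$, then $(u,v,w)=(a,b,w)$ already works (here $w\neq b$ because $ab\in E(G)$ while $aw\notin E(G)$). Otherwise $a$ is adjacent to every other vertex of $D$; in particular $a\notin\{c,d\}$, so the non-edge $cd$ avoids the pivot. Then $c$ has the neighbor $a$ and the non-neighbor $d$, so $(u,v,w)=(c,a,d)$ works, and these are distinct because $c\neq d$, $a\neq c$, and $a\neq d$. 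The hypothesis $k\geq 3$ guarantees the three vertices produced in either case actually exist.

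The only point that requires care is the distinctness of $u,v,w$, which is precisely why I funnel everything through a single pivot vertex possessing both a neighbor and a non-neighbor; once such a vertex is isolated, distinctness is forced by $uv\in E(G)$ and $uw\notin E(G)$, so I do not expect any genuine obstacle. An equivalent and even shorter route is the contrapositive: if \emph{no} vertex of $D$ had both a neighbor and a non-neighbor, then every vertex would be adjacent to all of $D$ or to none of it; but then a vertex of the first kind together with a vertex of the second kind would contradict the symmetry of adjacency, forcing $G[D]$ to be either complete or empty against the hypothesis.
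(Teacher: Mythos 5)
Your proof is correct and follows essentially the same route as the paper's: both arguments reduce the lemma to exhibiting a single vertex of $D$ with a neighbor and a non-neighbor inside $D$ (equivalently, a vertex $u$ with $1\le \deg_{G[D]}(u)\le k-2$), from which distinctness of $u,v,w$ is automatic. The only difference is that you explicitly justify the existence of such a vertex via an edge, a non-edge, and a short case analysis, whereas the paper asserts it without proof.
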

\begin{proof}
If $G[D]$ is neither complete nor empty, then there is at least one vertex $u$ such that  $1\le \deg_{G[D]}(u) \le k-2$.
Let  $v$ (resp.  $w$) be a a vertex adjacent (resp. non-adjacent) to $u$. Then, $u,v,w$ satisfy the desired condition.
\end{proof}

\begin{lem}\label{lem.grau}
If $G$ is a  nontrivial graph of order $n$ with  a vertex $u$ of degree $k$, then $\beta_p(G)\le n-\min \{k,n-1-k\}$.
\end{lem}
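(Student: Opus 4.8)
The plan is to build an explicit locating partition with exactly $n-\min\{k,n-1-k\}$ parts, exploiting the fact that a neighbor of $u$ and a non-neighbor of $u$ are immediately told apart by their distance to $u$. Write $m=\min\{k,n-1-k\}$, and split the remaining vertices into the set $A=N(u)$ of neighbors of $u$, with $|A|=k$, and the set $B=V\setminus N[u]$ of the remaining vertices, with $|B|=n-1-k$. Since $\min\{|A|,|B|\}=m$, I can choose $m$ pairwise disjoint pairs $\{a_1,b_1\},\dots,\{a_m,b_m\}$ with $a_i\in A$ and $b_i\in B$.

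Next I would form the partition $\Pi$ whose parts are these $m$ two-element sets $\{a_i,b_i\}$, the singleton $\{u\}$, and a singleton for each of the remaining $n-1-2m$ vertices of $V\setminus\{u\}$. A direct count gives $m+1+(n-1-2m)=n-m$ parts, as required; I would check that this arithmetic is the same whether $m=k$ (so that the leftover singletons are non-neighbors of $u$) or $m=n-1-k$ (leftover singletons are neighbors of $u$).

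Finally, to see that $\Pi$ is locating I would invoke the observation recorded in the Introduction: it suffices to verify that any two vertices lying in a common part receive different distance vectors. The only non-singleton parts of $\Pi$ are the pairs $\{a_i,b_i\}$, so the whole verification reduces to distinguishing $a_i$ from $b_i$. This is immediate using the coordinate corresponding to the part $\{u\}$: since $a_i$ is a neighbor of $u$ we have $d(a_i,\{u\})=1$, whereas $b_i\ne u$ is a non-neighbor of $u$, so $d(b_i,\{u\})\ge 2$. Hence $\Pi$ is a locating partition and $\beta_p(G)\le n-m=n-\min\{k,n-1-k\}$. I do not expect a genuine obstacle here; the only points requiring care are the disjointness and size bookkeeping that makes the $m$ neighbor–non-neighbor pairs available, and keeping $\{u\}$ as its own part so that this single coordinate resolves every pair at once.
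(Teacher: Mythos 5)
Your proof is correct and follows essentially the same route as the paper: pair up $m=\min\{k,n-1-k\}$ neighbors of $u$ with $m$ non-neighbors, keep everything else (including $u$) as singletons, and use the part $\{u\}$ to resolve each pair via $d(a_i,u)=1\neq d(b_i,u)\ge 2$. Your version is, if anything, slightly more careful about the counting and about the set $B=V\setminus N[u]$ (the paper writes $V(G)\setminus N(v)$ where it means the closed neighborhood).
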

\begin{proof}
Let $N(v)=\{x_1,\dots ,x_k\}$ and $V(G)\setminus N(v)=\{y_1, \dots ,y_{n-1-k}\}$ and $m=\min \{ k, n-1-k\}$. 
Take the partition $\Pi= \{ S_1,\dots ,S_m \} \cup \{ \{ z\} : z\notin  S_1\cup \ldots \cup S_m  \}\}$, where
$S_i=\{x_i,y_i\}$ for $i=1,\dots ,m $. 
Observe that $\{v\}$ resolves the vertices of $S_i=\{x_i,y_i\}$ for $i=1,\dots ,m $.
Therefore, $\Pi$ is a locating partition of $G$, impliying that  $\beta_p(G)\le |\Pi|=n-m=n-\min \{ k, n-1-k\}$.
\end{proof}

\begin{cor}\label{cor.grau}
If $G$ is a  graph of order $n\ge 7$ with at least one vertex $u$ satisfying $3\le \deg(u)\le n-4$, then $\beta_p(G)\le n-3$.
\end{cor}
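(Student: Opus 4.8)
The plan is to apply Lemma \ref{lem.grau} directly to the vertex $u$ and then check that the quantity $\min\{k,n-1-k\}$ appearing in that bound is at least $3$ under the stated degree constraints. Writing $k=\deg(u)$, Lemma \ref{lem.grau} immediately gives
\[
\beta_p(G)\le n-\min\{k,\,n-1-k\},
\]
so it suffices to bound the minimum from below by $3$.

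The two estimates I would carry out are elementary. First, the hypothesis $k\ge 3$ handles the first argument of the minimum directly. Second, the hypothesis $k\le n-4$ yields $n-1-k\ge n-1-(n-4)=3$, which handles the second argument. Combining these, $\min\{k,n-1-k\}\ge 3$, and substituting into the displayed inequality gives $\beta_p(G)\le n-3$, as desired.

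There is essentially no obstacle here: the corollary is a purely arithmetic specialization of Lemma \ref{lem.grau}. The role of the hypothesis $n\ge 7$ is only to guarantee that the admissible range $[3,\,n-4]$ of degrees is nonempty (for $n=7$ the interval degenerates to the single value $k=3$), so that a vertex $u$ satisfying $3\le\deg(u)\le n-4$ can exist at all. The only point worth confirming is that the partition constructed in the proof of Lemma \ref{lem.grau} remains valid when $m=\min\{k,n-1-k\}\ge 3$; this is immediate, since that lemma imposes no restriction on $m$ beyond $G$ being nontrivial, and the resolving vertex $u$ is excluded from all of the two-element parts $S_i=\{x_i,y_i\}$ by construction.
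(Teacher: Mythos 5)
Your proposal is correct and matches the paper's intent exactly: the corollary is stated without proof as an immediate arithmetic specialization of Lemma \ref{lem.grau}, and your verification that $3\le k\le n-4$ forces $\min\{k,n-1-k\}\ge 3$ is precisely the intended argument.
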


As a direct consequence of Theorem \ref{mdpd}, we know that if $G$ is a graph such that ${\rm diam}(G)\ge4$, then $\beta_p(G)\le n-3$. 
Next, we study the cases ${\rm diam}(G)=3$ and ${\rm diam}(G)=2$.

\begin{prop}\label{prop.twinpetitdiam3}
Let $G$ be a  graph of order $n\ge 9$. If  $\tau(G)\le \frac{n}{2}$ and ${\rm diam}(G)=3$, then  $\beta_p(G)\le n-3$.
\end{prop}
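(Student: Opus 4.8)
The plan is to collapse the problem to a single degenerate configuration by combining a degree reduction with a ``rainbow layering'' upper bound, and then to dispose of that configuration using the hypothesis $\tau(G)\le\frac n2$.

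First I would invoke Corollary \ref{cor.grau}: if $G$ has a vertex of degree $k$ with $3\le k\le n-4$, then $\beta_p(G)\le n-3$ and we are finished. So I may assume every vertex has degree in $\{1,2\}\cup\{n-3,n-2,n-1\}$. Since ${\rm diam}(G)=3$ there is no universal vertex (a universal vertex forces ${\rm diam}(G)\le 2$), hence no degree equals $n-1$; and connectivity excludes degree $0$. Thus every degree lies in $\{1,2,n-3,n-2\}$.

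The main engine is the following bound. Fix a vertex $a$ of eccentricity $3$ (it exists because ${\rm diam}(G)=3$), and let $L_0=\{a\},L_1,L_2,L_3$ be its distance layers, with $\ell_i=|L_i|\ge 1$. Make $\{a\}$ one part and colour $L_1\cup L_2\cup L_3$ with $\max\{\ell_1,\ell_2,\ell_3\}$ colours so that no colour is repeated inside a single layer; the colour classes are the remaining parts. Any two vertices sharing a part then lie in different layers, so they are resolved by the part $\{a\}$; by the criterion recalled in the Introduction (it suffices to separate vertices in the same part) this partition is locating, giving $\beta_p(G)\le 1+\max\{\ell_1,\ell_2,\ell_3\}$. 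As $\ell_1+\ell_2+\ell_3=n-1$ with each $\ell_i\ge 1$, this is at most $n-3$ \emph{unless} two of the layers are singletons and the third has size $n-3$.

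It then remains to treat this degenerate configuration: a ``big'' layer $B$ with $|B|=n-3$, all of whose vertices share the same distance to $a$, together with two unit layers. Here the key point is that $|B|=n-3>\frac n2\ge\tau(G)$ for $n\ge 9$, so by Proposition \ref{twin.list} the set $B$ cannot be a single twin class; together with the degree restriction and Lemma \ref{lem.completempty} (to exhibit an external resolver whenever $G[B]$ is neither complete nor empty), $B$ breaks into at least two twin classes, each of size at most $\tau(G)$. I would then build a locating partition of size at most $n-3$ directly: take one representative from each twin class of $B$ as a singleton part, check that these singletons already separate vertices lying in distinct twin classes, and bundle the remaining vertices (the other representatives together with $a$ and the two unit-layer vertices) into a bounded number of parts whose members receive pairwise distinct coordinates from the chosen singletons. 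Equivalently, one shows that four vertices can be deleted so that each of the six pairs among them is resolved by a remaining vertex, i.e. $\beta(G)\le n-4$, whence $\beta_p(G)\le\beta(G)+1\le n-3$ by Theorem \ref{mdpd}(1).

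The hard part will be exactly this last step. Unlike metric dimension, a locating partition must resolve pairs by \emph{whole parts} rather than by single vertices, so I must verify that bundling the non-representative vertices does not mask the twin-class signal (the ``matched partner'' obstruction, where two vertices differ only in their adjacency to one vertex of another twin class, and that difference is erased by a minimum over a larger part). This is precisely where both hypotheses $n\ge 9$ and $\tau(G)\le\frac n2$ are used: they guarantee enough distinct twin classes inside $B$ to supply the separating singletons while still leaving room to economise three parts.
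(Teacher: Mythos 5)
Your reduction to the degenerate configuration is correct and in fact a little cleaner than the paper's: the ``rainbow layering'' bound $\beta_p(G)\le 1+\max\{\ell_1,\ell_2,\ell_3\}$ is valid (two vertices sharing a colour class lie in different layers, hence are separated by the part $\{a\}$), and it disposes of everything except two singleton layers plus one layer $B$ of size $n-3$ --- exactly the paper's split. The problem is that the degenerate case, which is where all the substance of the proposition lives, is only sketched, and the one concrete route you commit to is provably unavailable. Consider the graph obtained from a clique $W=\{a_1,a_2,b_1,\dots,b_{n-5}\}$ by adding a vertex $x$ adjacent to all of $W$, a leaf $u$ adjacent only to $x$, and a vertex $v$ adjacent only to $a_1,a_2$. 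It has diameter $3$ (from $u$ the layers are $\{x\}$, $W$, $\{v\}$), it is precisely your degenerate configuration with $B=D_2$, and for $n=9$ it satisfies $\tau(G)=n-5\le\frac{n}{2}$. Any resolving set must contain at least one $a_i$ and at least $n-6$ of the $b_j$ (twin classes); moreover the pair $(a_2,b_{n-5})$ is resolved only by $v$ and the pair $(x,b_{n-5})$ only by $u$, forcing two further vertices, so $\beta(G)=n-3$. Hence ``$\beta(G)\le n-4$, whence $\beta_p(G)\le\beta(G)+1\le n-3$'' cannot be made to work. It is also not equivalent to your bundling plan: in a locating partition a pair inside a part may be separated by a whole \emph{part} sitting elsewhere, which is strictly stronger than separation by deleted vertices, and that extra strength is exactly what saves this graph.

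What is actually needed in the degenerate case is the case analysis you gesture at but do not perform. If $G[B]$ is neither complete nor empty, Lemma \ref{lem.completempty} yields a $2$-set resolved by an internal vertex together with a $3$-set (one vertex per layer) resolved by $a$, giving $n-3$ parts. If $G[B]$ is complete or empty, the sub-cases $B=D_3$ and $B=D_1$ are impossible because, combined with the degree restriction from Corollary \ref{cor.grau}, they force a twin set of size at least $n-4>\frac{n}{2}$; the only survivor is $B=D_2$ with the outer vertex $v$ of degree $2$, where $N(v)\cap B$ and $B\setminus N(v)$ are twin sets of sizes $2$ and $n-5$ and one must exhibit an explicit partition such as $\{a_1,b_1\},\{a_2,b_2\},\{x,b_3\}$ plus singletons, resolved by $v$, $v$ and $u$ respectively. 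None of this follows from the observation that ``$B$ breaks into at least two twin classes'': you must identify the classes, rule out the impossible sub-cases, and write down the partition in the surviving one. As submitted, the proof is incomplete precisely at the step you yourself flag as ``the hard part.''
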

\begin{proof}
By Corollary \ref{cor.grau}, and having also in mind that $G$ has no universal vertex, since its diameter is 3, we may suppose that, for every vertex $w$, $\deg (w)\in \{ 1,2,n-3,n-2\}$.
Let $u$ be a vertex of eccentricity $3$.
Consider the nonempty subsets $D_i=\{ v : d(u,v)=i \}$ for $i=1,2,3$. 
If at most one of these three subsets has exactly one vertex, then there exist five distinct vertices $x_1,x_2,x_3,y_{1},y_{2}$ such that,  for $i=1,2,3$, $x_i\in D_i$ and vertices $y_1$ and $y_2$ do not belong to the same set $D_i$.   
Consider the partition 
$\Pi= \{  S_1, S_2  \} \cup \{ \{ z\} : z\notin  S_1\cup S_2   \}\}$, where
$S_1=\{ x_1,x_2,x_3\}$ and $S_2=\{y_{1},y_{2}\}$. 
Then, $\{ u \}$ resolves every pair of vertices in $S_1$ and the vertices in $S_2$.
Therefore, $\Pi$ is a locating partition, implying that $\beta_p(G)\le n-3$.

Next,  suppose that $|D_{i_0}|=n-3$ for exactly one value $i_0\in \{ 1,2,3\}$ and $|D_i|=1$ for $i\not= i_0$. 
We distinguish two cases.

\begin{enumerate}[(1)]

\item $G[D_{i_0}]$ is neither complete nor empty.
Then by Lemma \ref{lem.completempty}, there exist vertices $r,s,t\in D_{i_0}$ such that $rs\in E(G)$ and $rt\notin E(G)$. 
Consider the sets $S_1=\{ s,t \}$ and $S_2=\{x_1,x_2,x_3\}$, where  $x_i\in D_i$ for $i=1,2,3$, with the additional condition $S_2\cap \{ r,s,t \}=\emptyset$, which is possible since $|D_{i_0}|\ge 4$.
Take the partition
$\Pi= \{  S_1, S_2  \} \cup \{ \{ z\} : z\notin  S_1\cup S_2   \}\}$.
Observe that $\{ r \}$ resolves the vertices in $S_1$ and $\{ u \}$ resolves every pair of vertices in $S_2$.
Therefore, $\Pi$  is a locating partition, implying that $\beta_p(G)\le n-3$.

\item $G[D_{i_0}]$ is either complete or empty. 
We distinguish three cases, depending on for which $i_0\in \{ 1,2,3\}$, $|D_{i_0}|=n-3$.

\begin{enumerate}[(a)]

\item $|D_3|=n-3$.  Then, $D_3$ is a twin set with $n-3$ vertices, a contradiction as $n\ge9$.

\item $|D_1|=n-3$.
Let $v$ be the (unique) vertex of $D_2$.
Then $D_1\cap N(v)$ and $D_1\cap \overline{N(v)}$ are twin sets.
If $\deg(v)=2$, then $|D_1\cap \overline{N(v)}|=n-4$, a contradiction.
If $n-3 \le \deg(v) \le n-2$, then $|D_1\cap N(v)|\ge n-4$, again a contradiction.

\item $|D_2|=n-3$.
Let $v$ be the (unique) vertex of $D_3$. 
Then, both $N(v)$ and $D_2 \setminus N(v)$ are twin sets.
Notice that $\deg (v)\in \{ 1,2,n-3 \}$. We distinguish cases.

\begin{enumerate}[(c.i)]

\item If $\deg (v)=1$ (resp. $\deg (v)=n-3$) , then $|D_2 \setminus N(v)|=n-4$ (resp. $|N(v)|=n-3$), a contradiction.

\item If $\deg (v)=2$, then $|D_2 \setminus N(v)|=n-5$. 
Let $N(v)=\{a_1,a_2\}$, $D_2\setminus N(v)=\{b_1,\ldots,b_{n-5}\}$, $D_1=\{x\}$.
Take the partition $\Pi= \{  S_1, S_2, S_3  \} \cup \{ \{ z\} : z\notin  S_1\cup S_2\cup S_3   \}\}$, 
where  $S_1=\{a_1,b_1\}$, $S_2=\{a_2,b_2\}$ and $S_3=\{x,b_3\}$.
Observe that $\{ v \}$ resolves the vertices in $S_1$ and $S_2$ and  $\{ u \}$ resolves the vertices in $S_3$.
Therefore, $\Pi$  is a locating partition, implying that $\beta_p(G)\le n-3$.

\end{enumerate}

\end{enumerate}

\end{enumerate}

\vspace{-1.0cm}\end{proof}

\begin{prop}\label{prop.twinpetitdiam2}
Let $G$ be a  graph of order $n\ge 9$. 
If  $\tau(G)\le \frac{n}{2}$ and ${\rm diam}(G)=2$, then  $\beta_p(G)\le n-3$.
\end{prop}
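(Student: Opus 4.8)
The plan is to first strip away all non-extreme degrees using the bound already in hand, and then to build a locating partition of size $n-3$ by an explicit merging of vertices. First I would apply Corollary \ref{cor.grau}: if some vertex has degree between $3$ and $n-4$ then $\beta_p(G)\le n-3$ and we are done, so from now on I may assume every vertex of $G$ has degree in $\{1,2,n-3,n-2,n-1\}$. Since $G$ is connected of diameter $2$ and $n\ge 9$, its maximum degree cannot be at most $2$ (a connected graph of maximum degree $2$ is a path or a cycle, and for $n\ge 9$ these have diameter greater than $2$); hence the maximum degree $\Delta$ lies in $\{n-3,n-2,n-1\}$, providing a vertex of large degree to anchor the construction.

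Next I would reformulate what has to be shown. Recall that a partition is locating as soon as every two vertices sharing a part have distinct distance vectors, and that in a graph of diameter $2$ a vertex $z\notin\{a,b\}$ resolves $\{a,b\}$ precisely when it is adjacent to exactly one of $a,b$, that is, when $a$ and $b$ are not twins. Thus it is enough to find three pairwise-disjoint pairs $\{a_i,b_i\}$, $i=1,2,3$, of non-twin vertices so that each pair admits a resolver $z_i\notin\{a_1,b_1,a_2,b_2,a_3,b_3\}$; declaring $\{a_1,b_1\},\{a_2,b_2\},\{a_3,b_3\}$ to be parts and every remaining vertex a singleton then yields a locating partition with exactly $n-3$ parts. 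Since $n\ge 9$, after the six paired vertices are chosen at least three vertices remain to serve as resolvers.

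The search for these three pairs I would organize according to $\Delta$, and this is where $\tau(G)\le\frac n2$ enters. If $\Delta=n-1$, I would set aside the class $U$ of universal vertices, which by Proposition \ref{twin.list} is a twin class and hence satisfies $|U|\le\tau\le\frac n2$; the remaining low-degree vertices fall into small twin classes, and I would build the three pairs by matching vertices from distinct classes, using a twin partner of a chosen vertex as its resolver. If $\Delta\in\{n-2,n-3\}$, I would fix $u$ with $\deg(u)=\Delta$, so that $D_2=V\setminus N[u]$ has one or two vertices, and analyze the induced graph on $N(u)$: whenever $N(u)$ is neither complete nor empty, Lemma \ref{lem.completempty} produces a non-twin triple from which a resolvable pair together with its resolver is extracted, while the homogeneous subcases (where $N(u)$ or $D_2$ is complete or empty) are ruled out or dispatched by the twin bound exactly as in the proof of Proposition \ref{prop.twinpetitdiam3}. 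Crucially, $\tau\le\frac n2$ forbids any twin class from exhausting a linear fraction of $V$, so distinct-class partners always remain available; in particular it excludes the exceptional diameter-$2$ graphs of Theorem \ref{n-2 wrong}, such as $K_{n-2}\vee\overline{K_2}$ and $K_{2,n-2}$, whose twin number is $n-2>\frac n2$.

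I expect the main obstacle to be the cases in which $G$ is built from a small clique of universal vertices together with many pairwise-twin pairs of low-degree vertices, the \emph{windmill-type} graphs of the form $K_1\vee(\cdots)$, where every admissible pair must cross two distinct twin classes and the natural resolvers are themselves candidates for pairing. The delicate point is to choose the three pairs and their three resolvers simultaneously so that no resolver gets absorbed into another part; the hypotheses $n\ge 9$ and $\tau\le\frac n2$ supply the slack, since in these configurations the twin classes are small, hence numerous, and three disjoint cross-class pairs each with a private resolver can always be selected. For the few most rigid configurations that survive this reasoning I would finish by writing down the partition explicitly and verifying the distance vectors coordinate by coordinate, exactly as in Propositions \ref{prop.kary} and \ref{prop.tab}.
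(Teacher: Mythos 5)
Your opening moves match the paper's: the reduction via Corollary \ref{cor.grau} to degrees in $\{1,2,n-3,n-2,n-1\}$, and the reformulation that in a diameter-$2$ graph it suffices to exhibit three disjoint $2$-element parts, each resolved by a singleton vertex adjacent to exactly one of its two members, are both correct and form exactly the skeleton of the paper's argument. The gap is that everything after this skeleton is deferred. The entire content of the paper's proof is the selection of those three pairs and their private resolvers, and it occupies three long cases (a degree-$2$ vertex exists; a degree-$1$ vertex exists but no degree-$2$ vertex, forcing a universal neighbour; all degrees are at least $n-3$, so that $\overline{G}$ has maximum degree $2$ and its non-isolated part is a disjoint union of paths and cycles, which must then be analysed by the number and sizes of its components). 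Your plan replaces this with the assertion that ``three disjoint cross-class pairs each with a private resolver can always be selected,'' which is precisely the statement that needs proof; nothing in the hypotheses $\tau(G)\le n/2$ and $n\ge 9$ makes it automatic, and it is exactly in the rigid configurations you flag that the work lies.

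Two of the specific shortcuts you propose do not go through as stated. First, a twin partner $a'$ of $a$ does not in general resolve a pair $\{a,b\}$: by Proposition \ref{twin.list}(1) one has $d(a',b)=d(a,b)$, so $a'$ resolves $\{a,b\}$ only when $d(a,a')\neq d(a,b)$ --- for true twins $a,a'$ and $b\in N(a)$ this fails, and for false twins it fails when $b\notin N(a)$. Second, the claim that the cases $\Delta\in\{n-2,n-3\}$ are ``dispatched exactly as in Proposition \ref{prop.twinpetitdiam3}'' cannot be taken over verbatim: that proof hinges on a vertex of eccentricity $3$ and the resulting three nonempty distance levels $D_1,D_2,D_3$, a structure unavailable at diameter $2$; the paper instead anchors on a vertex of \emph{minimum} degree ($1$ or $2$) or, failing that, passes to the complement. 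So the framework is right and compatible with the paper's, but the combinatorial core --- the explicit pair-and-resolver selection in each residual configuration --- is missing, and the two devices you lean on to avoid it are not valid substitutes.
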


\begin{proof}
By Corollary \ref{cor.grau}, we may suppose that, for every vertex $w\in V(G)$, $\deg (w)\in \{ 1,2,n-3,n-2,n-1\}$.
We distinguish three cases.

\begin{enumerate}[(i)]

\item \emph{There exists a vertex $u$ of degree $2$.}
Consider the subsets $D_1=N(u)=\{ x_1,x_2\}$ and $D_2=\{ v : d(u,v)=2\}$. 
We distinguish two cases.

\begin{enumerate}[(1)]

\item $G[D_{2}]$ is neither complete nor empty.
Then, Lemma \ref{lem.completempty}, there exist three different vertices $r,s,t\in D_2$ such that $rs\in E(G)$ and $rt\notin E(G)$. 
Consider two different vertices 
$y_1,y_2\in D_2\setminus \{ r,s,t \}$ and let $S_1=\{ x_1,y_1\}$, $S_2=\{ x_2,y_2 \}$, $S_3=\{ s,t \}$. 
Then, $\{ u \}$ resolves the vertices in $S_1$ and in $S_2$, and $\{r \}$ resolves the vertices in $S_3$.
Hence, $\Pi = \{ S_1,S_2,S_3 \}\cup \{ \{ z \} : z\notin S_1\cup S_2 \cup S_3 \}$ is a locating partition of $G$.

\item $G[D_{2}]$ is either complete or empty.
 Then the subsets of $D_2$, 
$A=N(x_1)\cap\overline{N(x_2)}\cap D_2$, $B=N(x_1)\cap N(x_2)\cap D_2$ and $C=\overline{N(x_1)}\cap N(x_2)\cap D_2$  are twin sets. We distinguish cases.

\begin{enumerate}[(a)]

 \item  If either $\deg(x_1)\le 2$ or $\deg(x_2)\le 2$, then either $|C|\ge n-4$ or $|A|\ge n-4$, in both cases a contradiction as $n\ge9$.
  
 \item If both $x_1$ and $x_2$ have degree at least $n-3$, then $0 \le |A| \le 2$,  $0 \le |C| \le 2$ and $n-7 \le |B| \le n-3$.
We distinguish cases, depending on the size of $B$.

\begin{enumerate}[(b.1)]

\item $|B| \ge n-4$. Then, $\tau(G) \ge n-4$, a contradiction as $n\ge9$.

\item $|B| = n-5 \ge 4$. 
If $D_2 \cong \overline {K_{n-3}}$, then $\tau(G)=n-4$, as $B\cup \{u\}$ is a (maximum) twin set of $G$.
Suppose that  $D_2 \cong K_{n-3}$.
Let $A\cup C=\{y_1,y_2\}$, $\{b_1,b_2,b_3,b_4\}\subseteq B$.
Consider the partition $\Pi= \{  S_1, S_2, S_3  \} \cup \{ \{ z\} : z\notin  S_1\cup S_2\cup S_3   \}\}$, 
where  $S_1=\{b_1,y_1\}$, $S_2=\{b_2,y_2\}$ and $S_3=\{b_3,u\}$.
Observe that either $\{ x_1\}$ or  $\{ x_2\}$ resolves the vertices of $S_1$ and $S_2$.
Notice also that $\{ b_4\}$ resolves the vertices in $S_3$.
Hence, $\Pi$ is a locating partition of $G$.

\item $2 \le n-7 \le |B| \le n-6$. 
We may assume without loss of generality that $|A|=2$ and $1 \le |C| \le 2$.
Let $A=\{a_1,a_2\}$, $\{b_1,b_2\}\subseteq B$, $\{c_1\}\subseteq C$.
Consider the partition $\Pi= \{  S_1, S_2, S_3  \} \cup \{ \{ z\} : z\notin  S_1\cup S_2\cup S_3   \}\}$, 
where  $S_1=\{a_1,b_1\}$, $S_2=\{a_2,b_2\}$ and $S_3=\{x,c_1\}$.
Observe that $\{ x_2\}$ resolves the vertices of $S_1$ and $S_2$.
Notice also that $\{ u\}$ resolves the vertices in $S_3$.
Hence, $\Pi$ is a locating partition of $G$.

\end{enumerate}

\end{enumerate}

\end{enumerate}

\item \emph{There exists at least one vertex $u$ of degree $1$ and there is no vertex of degree 2.}
In this case, the neighbor $u$ of $v$ is a universal vertex $v$.
Let $\Omega$ be the set of vertices different from $v$ that are not leaves.
Notice that there are at most two vertices of degree 1 in $G$, as otherwise all vertices in $\Omega$ would have degree between  $3$ and  $n-4$, contradicting the assumption made at the beginning of the proof.

If there are exactly two vertices of degree 1, then $|\Omega|=n-3$.
In such a case,  $\Omega$ induces a complete graph in $G$, 
as otherwise the non-universal vertices in $G[\Omega]$ would have degree at most $n-4$.
So,  $\Omega$ is a twin set, implying that  $\tau (G) =n-3 > \frac{n}{2} $, a contradiction.

Suppose thus that $u$ is the only vertex of degree 1, which means that  $\Omega$ contains $n-2$ vertices,
 all of them  of degree $n-3$ or $n-2$.
Consider the graph   $H=\overline{G}[\Omega]$.
Certainly,  $H$ has $n-2$ vertices, all of them  of degree $0$ or $1$.
Let $H_i$ denote the set of vertices of degree $i$ of $H$, for $i=0,1$.
Observe that $|H_0|\le \frac{n}{2}$, since $H_0$ is a twin set in $G$.
Hence, $|H_1|\ge4$, as  $n\ge9$ and the size of $H_1$ must be even.
We distinguish two cases, depending on the size of $H_1$.

\begin{enumerate}[(a)]

\item $|H_1|=4$. 
Notice that $|H_0|\ge3$.
Let $\{y_1,y_2,y_3\}\subseteq H_0$ and $H_1=\{x_1,x_2,x_3,x_4\}$ such that $\{x_1x_2,x_3x_4\}\subseteq E(H_1)$.
Consider the partition $ \Pi =\{ S_1,S_2,S_3\} \cup \{ \{ z \} : z\notin S_1\cup S_2 \cup S_3 \}$, 
where  $S_1= \{x_1,y_1\}$, $S_2=\{ x_2,y_2\}$ and $S_3=\{ u,v\}$.
Observe that 
$d_G(x_2,x_1)=2\not= 1=d_G(x_2,y_1)$, 
$d_G(x_4,x_3)=2\not= 1=d_G(x_4,y_3)$,
$d_G(y_2,u)=2\not= 1=d_G(y_2,v)$.
Hence, 
$\{ x_2 \}$ resolves the vertices in $S_1$, $\{ x_4 \}$ resolves the vertices in $S_2$  and $\{ y_2 \}$ resolves the vertices in $S_3$.
Therefore, $\Pi$ is a locating partition of $G$, implying that $\beta_p(G)\le n-3$.

\item $|H_1|\ge6$. 
Let $\{x_1,x_2,x_3,x_4,x_5,x_6\}\subseteq H_1$ such that $\{x_1x_2,x_3x_4,x_5x_6\}\subseteq E(H_1)$.
Consider the partition $ \Pi =\{ S_1,S_2,S_3\} \cup \{ \{ z \} : z\notin S_1\cup S_2 \cup S_3 \}$, 
where  $S_1= \{u,v\}$, $S_2=\{ x_2,x_4\}$ and $S_3=\{ x_3,x_5\}$.
Observe that 
$d_G(u,x_1)=2\not= 1=d_G(v,x_1)$, 
$d_G(x_4,x_1)=2\not= 1=d_G(x_2,x_1)$,
$d_G(x_3,x_6)=2\not= 1=d_G(x_5,x_6)$.
Hence, 
$\{ x_1 \}$ resolves the vertices in $S_1$ and in $S_2$, and $\{ x_6 \}$ resolves the vertices in $S_3$.
Therefore, $\Pi$ is a locating partition of $G$, implying that $\beta_p(G)\le n-3$.

\end{enumerate}

\item \emph{There are no vertices of degree at most 2.} 
In this case, all the vertices of $G$ have degree $n-3$, $n-2$ or $n-1$, that is to say, 
all the vertices of  $\overline{G}$  have degree 0, 1 or 2.
Since $G$ has at most $\frac{n}{2}$ pairwise twin vertices, there are at most $\frac{n}{2}$ vertices of degree 0 in $\overline{G}$.
Let $H_i$ denote the set of vertices of degree  $i$ of $\overline{G}$, for $i=0,1,2$.
Let $\Gamma=H_1\cup H_2$ and $H=\overline{G}[\Gamma]$.
We distinguish  cases, depending on the size of $\Gamma$, showing in each of them a collection of three  2-subsets 
$S_1,S_2,S_3$ such that the corresponding partition
 $\Pi =\{ S_1,S_2,S_3 \}\cup \{ \{ z \} : z\notin S_1 \cup S_2\cup S_3 \}$
is a locating partition for $G$, implying thus that $\beta_p(G)\le n-3$.

\begin{enumerate}[(a)]

\item  $|\Gamma|\in \{ 5, 6 \}$. Then, $|H_0|\ge 3$. Let $\{ y_1,y_2,y_3 \}\subseteq H_0$.
It easy to check that in both cases $H$ contains at least three edges either of the form
(i) $x_1x_2$, $x_3x_4$, $x_4x_5$  or of the form (ii) $x_1x_2$, $x_3x_4$, $x_5x_6$.
Take $S_1=\{ x_1,y_1\}$, $S_2= \{ x_3,y_2\}$, $S_3=\{ x_5,y_3\}$.
Notice that,  in case (i),  $\{ x_2 \}$ resolves  the vertices in $S_1$ and $\{ x_4\}$ resolves the vertices in $S_2$ and in $S_3$,
and in case (ii),  $\{ x_2 \}$ resolves the vertices in $S_1$, $\{ x_4\}$ resolves the vertices in $S_2$ and   $\{ x_6\}$ resolves the vertices in $S_3$.

\item $|\Gamma|=7$. 
Then,  $|H_0|\ge 2$. 
Let $\Gamma=\{x_1,x_2,x_3,x_4,x_5,x_6,x_7\}$ 
such that $\{x_1x_2, x_2x_3, x_4x_5,x_6x_7\}\subseteq E(\overline{G})$, $x_4x_6\notin E(\overline{G})$  and $\{y_1,y_2\}\subseteq H_0$ .
Take  $S_1= \{x_1,y_1\}$, $S_2=\{ x_3,y_2\}$ and $S_3=\{x_5,x_6\}$.
Observe that $\{ x_2 \}$ resolves the vertices in $S_1$ and in $S_2$, and $\{ x_4 \}$ resolves the vertices in $S_3$.

\item $|\Gamma| \ge 8$. 
Then,   all the connected components of $H$ are isomorphic either to a path or to a cycle.
We distinguish cases, depending on the number of components of $\overline{G}[\Gamma]$.

\begin{enumerate}[(c.1)]

  \item  If  $H$  is connected, then $H$ contains a path $x_1x_2\dots x_8$ of length 7. 
	Take $S_1=\{x_1,x_2\}$,  $S_2=\{x_4,x_5\}$ and  $S_3=\{x_7,x_8\}$. 
	Then, $\{ x_3 \}$ resolves the vertices in $S_1$ and in $S_2$,
  and $\{ x_6 \}$ resolves the vertices in $S_3~$.

  \item  If  $H$  has 2 connected components, say $C_1$ and $C_2$, let assume that $|V(C_1)|\ge |V(C_2)|\ge 2$. 
	We distinguish two cases.
	
   \begin{enumerate}[(c.2.1)]
	
       \item If one of the connected components has at most 3 vertices, then $|V(C_1)|\ge 5$ and $2\le |V(C_2)|\le 3$. 
			If   $x_1x_2x_3x_4x_5$ and $y_1y_2$ are paths contained in  $C_1$ and $C_2$, respectively, then consider $S_1=\{x_1,y_1\}$,  $S_2=\{x_3,y_2\}$ and  $S_3=\{x_5,t\}$, where $t$ is any vertex different from $x_1,x_2,x_3,x_4,x_5,y_1,y_2$.
         Then,  it is easy to check that $\{ x_2 \}$ resolves the vertices in $S_1$ and in $S_2$, and $\{ x_4 \}$ resolves the vertices in $S_3$.
				
     \item If both connected components have at least 4 vertices, let  $x_1x_2x_3x_4$ and $y_1y_2y_3y_4$ be paths contained in $C_1$ and $C_2$, respectively. 
		Take $S_1=\{x_1,y_1\}$,  $S_2=\{x_3,y_2\}$ and  $S_3=\{x_4,y_4\}$.
    Then, it is easy to check that $\{ x_2 \}$ resolves the vertices in $S_1$ and in $S_2$, and $\{ y_3 \}$ resolves the vertices in $S_3$.
			
   \end{enumerate}
	
  \item  If $H$ has 3 connected components, say $C_1$, $C_2$ and $C_3$, 
	then we may assume that $|V(C_1)|\ge 3$ and $|V(C_1)|\ge |V(C_2)|\ge |V(C_3)|\ge 2$.
  Let $x_1x_2x_3$, $y_1y_2$ and $z_1z_2$ be paths contained in $C_1$, $C_2$  and $C_3$ respectively. 
	Take $S_1=\{x_1,y_1\}$,  $S_2=\{x_3,y_2\}$ and  $S_3=\{z_2,t\}$, where $t$ is a vertex in $C_1\cup C_2$ different from $x_1,x_2,x_3,y_1,y_2$ that exists because $|\Gamma|\ge 8$.
  Then, it is easy to check that $\{ x_2 \}$ resolves the vertices in $S_1$ and in $S_2$, and $\{ z_1 \}$ resolves the vertices in $S_3$.
			
  \item  If $H$ has at least 4 connected components, say $C_1$, $C_2$, $C_3$ and $C_4$, 
	then we may assume that  $|V(C_1)|\ge |V(C_2)|\ge |V(C_3)|\ge  |V(C_4)|\ge 2$. 
  Let $x_1x_2$, $y_1y_2$, $z_1z_2$ and $t_1t_2$ be edges of $C_1$, $C_2$, $C_3$  and $C_4$ respectively. 
	Take $S_1=\{x_1,y_1\}$,  $S_2=\{y_2,z_2\}$ and  $S_3=\{t_1,w\}$, where $w$ is a vertex in $C_1\cup (V\setminus \Gamma)$ different from $x_1,x_2$, that  exists since $G$ has order at least $9$.
  Then, it is easy to check that $\{ x_2 \}$ resolves the vertices in $S_1$, $\{ z_1 \}$ resolves the vertices in $S_2$, and $\{ t_2 \}$ resolves the vertices in $S_3$.
			
\end{enumerate}
\end{enumerate}
\end{enumerate}

\vspace{-1.2cm}\end{proof}

As a consequence of Theorem \ref{mdpd}, Proposition \ref{prop.twinpetitdiam3} and Proposition \ref{prop.twinpetitdiam2}, the following result is obtained.

\begin{thm}\label{thm.twinpetit}
Let $G$ be a  graph of order $n\ge 9$. If  $\tau(G)\le \frac{n}{2}$, then  $\beta_p(G)\le n-3$.
\end{thm}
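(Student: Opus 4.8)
The plan is to prove the statement by a case analysis on the diameter $d = {\rm diam}(G)$, since each range of $d$ is already governed by a result available to us. First I would dispose of the degenerate case $d = 1$: a graph of diameter $1$ is complete, so by Proposition \ref{twin.list}(8) it would satisfy $\tau(G) = n$, which for $n \ge 9$ exceeds $\frac{n}{2}$ and contradicts the hypothesis. Hence only $d \ge 2$ can occur, and the diameter-$1$ case never enters the argument.

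Next I would handle the large-diameter regime $d \ge 4$, where no information about twins is needed at all. Here Theorem \ref{mdpd}(2) gives immediately
\[
\beta_p(G) \le n - d + 1 \le n - 4 + 1 = n - 3,
\]
so the bound holds. This leaves exactly the two intermediate diameters, which are precisely what the preceding propositions were designed to cover. When $d = 3$, the hypothesis $\tau(G) \le \frac{n}{2}$ together with Proposition \ref{prop.twinpetitdiam3} yields $\beta_p(G) \le n - 3$; when $d = 2$, Proposition \ref{prop.twinpetitdiam2} yields the same conclusion. Since $d = 2$, $d = 3$, and $d \ge 4$ exhaust all admissible values of the diameter once $d = 1$ has been excluded, the inequality $\beta_p(G) \le n - 3$ follows in every case.

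The combining step itself is essentially bookkeeping, so I do not expect the difficulty to reside here. The genuine obstacle lies entirely inside Propositions \ref{prop.twinpetitdiam3} and \ref{prop.twinpetitdiam2}, where one must exhibit an explicit locating partition of size at most $n - 3$ under a tight constraint on the degree sequence (forced by Corollary \ref{cor.grau}) and on the twin number. In particular, the diameter-$2$ analysis, where most vertices may have degree close to $n$ so that $\overline{G}$ decomposes into disjoint paths and cycles, is where the real care is required; but that work has already been carried out, and the present theorem is only the clean assembly of those three facts.
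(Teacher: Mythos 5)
Your proof is correct and is exactly the paper's argument: the theorem is stated there as a direct consequence of Theorem \ref{mdpd}(2) (handling diameter at least $4$), Proposition \ref{prop.twinpetitdiam3} (diameter $3$) and Proposition \ref{prop.twinpetitdiam2} (diameter $2$), with the diameter-$1$ case excluded since complete graphs have $\tau(G)=n$. Your write-up merely makes the case split explicit, which the paper leaves implicit.
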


\subsection{Twin number greater than half the order}

In this subsection, we focus our attention  on the case when $G$ is a nontrivial graph of order $n$ such that $\tau (G)=\tau > \frac{n}{2}$. 
Notice that, in these graphs there is a unique $\tau$-set $W$.
Among others, we prove that, in such a case, $\displaystyle \tau  (G)\le  \beta_p(G)  \le \frac{n+\tau(G)}{2}$.

\begin{prop}\label{cal}
Let $G$ be a graph of order $n$, other than $K_n$. 
If $W$ is a $\tau$-set such that  $G[W]\cong  {K_{\tau}}$, then $\beta_p(G)\ge \tau(G) +1$.
\end{prop}

\begin{proof}
Assume that $\Pi=\{S_1,S_2,\ldots,S_{\tau}\}$ is  a locating partition of $G$.
If $W=\{w_1,w_2,\ldots,w_{\tau}\}$,  then  we can assume without loss of generality that, for every $i\in\{1,...,\tau\}$, $w_i\in S_i$.
Let $v$ be a vertex of $N(W)\setminus W$. 
Take $j\in\{1,...,\tau\}$ such that $\{w_j,v\}\subseteq S_j$.
Certainly, $r(v|\Pi)=(1,\dots 1,\underset{j)}{0}, 1, \ldots, 1)=r(w_j|\Pi)$, a contradiction.
\end{proof}

\begin{thm}\label{kmedios}
Let $G=(V,E)$ be a  graph of order $n$ such that $\frac{n}{2} < \tau (G)=\tau= n-k$ and let $W$ be its $\tau$-set. 
If $G[W]\cong {K_{\tau}}$, then $\beta_p(G)\le n-k/2$.
\end{thm}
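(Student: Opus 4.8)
The plan is to exhibit a locating partition of $G$ with at most $n-\lceil k/2\rceil$ parts (and $n-\lceil k/2\rceil\le n-k/2$), almost all of whose parts are singletons. We may assume $G\neq K_n$, i.e. $k\ge1$. Write $\tau=n-k$ and let $W=\{w_1,\dots,w_\tau\}$ be the (unique, since $\tau>n/2$) $\tau$-set, so that only $k<n/2$ vertices lie outside $W$. By Proposition~\ref{twin.list}(4) each such vertex is adjacent either to all of $W$ or to none of $W$; let $A$ be the set of \emph{full neighbours} and $B$ the set of \emph{non-neighbours}, so $a+b=k$ where $a=|A|$, $b=|B|$, and $d(w,a')=1$, $d(w,b')\ge2$ for every $w\in W$, $a'\in A$, $b'\in B$. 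I will form exactly $\lceil k/2\rceil$ pairwise disjoint two-element parts (\emph{merges}) and leave every remaining vertex as a singleton; this yields $n-\lceil k/2\rceil$ parts. Because any vertex is at distance $0$ from its own part and at distance $\ge1$ from every singleton containing another vertex, two vertices can share a distance vector only if they lie in a common part of size $\ge2$. Hence it suffices to separate the two vertices of each merge by some singleton part.

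The basic separation rule is that a merge $\{x,y\}$ with $d(x,W)\neq d(y,W)$ is separated by any surviving singleton $\{w_j\}\subseteq W$; in particular every pair of the form $\{a',b'\}$ ($a'\in A,\,b'\in B$) or $\{w_i,b'\}$ ($b'\in B$) is of this type, and at least one $W$-singleton always survives since $\tau>k\ge\lceil k/2\rceil$. Consequently the case $a\le b$ is immediate: applying Lemma~\ref{lem.grau} to any $w\in W$, whose non-neighbourhood is exactly $B$, gives $\beta_p(G)\le n-\min\{\tau-1+a,\,b\}=n-b\le n-k/2$. Equivalently, one may pair each vertex of $A$ with a distinct vertex of $B$ and $\lceil k/2\rceil-a$ further $B$-vertices with distinct $W$-vertices, so that all merges are of the easy type.

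The substantial case is $a>b$. Now the only awkward merges are those joining two vertices at equal distance from $W$, namely $\{w_i,a'\}$ or $\{a',a''\}$ with $a',a''\in A$: the $W$-singletons cannot separate them, so I must keep as a singleton some \emph{outside} vertex adjacent to exactly one of the two. I plan to merge $\lceil k/2\rceil$ of the outside vertices with distinct $W$-vertices and keep the other $\lfloor k/2\rfloor$ outside vertices as singleton resolvers, choosing the split so that every merged $A$-vertex $a'$ owns a resolver among the kept singletons: either a neighbour in $B$ (then $d(a',b')=1\neq d(w_i,b')$) or a non-neighbour in $A$ (then $d(a',\cdot)=2\neq1=d(w_i,\cdot)$). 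The crucial structural input is Proposition~\ref{twin.list} together with the fact that $W$ is a twin class: no $a'\in A$ is a twin of a $W$-vertex, so a short neighbourhood computation shows that every $a'\in A$ admits at least one such resolver. Encoding this, in the auxiliary graph $\Gamma$ on $A\cup B$ whose edges are the $G$-non-edges inside $A$ together with the $G$-edges between $A$ and $B$, every vertex of $A$ has degree at least $1$.

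Everything thus reduces to choosing the $\lfloor k/2\rfloor$ singleton resolvers so that, in $\Gamma$, they dominate the set of merged $A$-vertices. This is the step I expect to be the main obstacle: dominating a prescribed subset in which each vertex merely has some neighbour can in general cost as many vertices as the subset itself, so Ore's bound $\gamma\le n/2$ cannot be invoked verbatim for a proper subset. The resolution should use the standing inequality $a>b$: when $B\neq\emptyset$ the $B$-side resolvers are necessarily shared among several $A$-vertices, keeping the count low, while the $A$-vertices having no $B$-neighbour form a set on which $\Gamma$ induces minimum degree $\ge1$ and to which the genuine Ore bound $\lfloor\,\cdot\,/2\rfloor$ does apply (this already settles the extreme subcase $B=\emptyset$ via $\gamma(\overline{G}[A])\le\lfloor a/2\rfloor=\lfloor k/2\rfloor$). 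Once such a split is fixed, padding the resolver set arbitrarily up to $\lfloor k/2\rfloor$ vertices and checking each of the $\lceil k/2\rceil$ merges as above produces a locating partition with $n-\lceil k/2\rceil\le n-k/2$ parts, as required.
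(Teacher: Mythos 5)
Your overall architecture is the same as the paper's: keep a small set of outside vertices as singleton ``resolvers,'' pair every other outside vertex with a distinct vertex of $W$, and observe that a pair $\{w_i,v_i\}$ is resolved by a singleton non-neighbour of $v_i$ in $N(W)\setminus W$, by a singleton $W_2$-neighbour of $v_i$, or (when $d(v_i,W)\ge 2$) by any surviving $W$-singleton. Your reduction of the case $a\le b$ to Lemma~\ref{lem.grau} is correct and is a small shortcut the paper does not use, and your auxiliary graph $\Gamma$ with minimum degree $\ge 1$ on $A$ is the right object. But the step you yourself flag as the main obstacle is genuinely not closed, and the route you sketch for it contains a false claim. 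It is not true that the set $C$ of $A$-vertices with no $B$-neighbour induces a subgraph of $\Gamma$ of minimum degree $\ge 1$: such a vertex is only guaranteed a $G$-non-neighbour somewhere in $A$, and that non-neighbour may lie in $A\setminus C$. Concretely, take $A=\{a_1,a_2,a_3\}$, $B=\{b_1\}$ with $a_1b_1\in E$, $a_2a_3\in E$, $a_1a_2,a_1a_3\notin E$; then $C=\{a_2,a_3\}$ and $\Gamma[C]$ is edgeless, so Ore's bound cannot be applied to $\Gamma[C]$. Likewise, the assertion that ``$B$-side resolvers are necessarily shared among several $A$-vertices'' has no justification: a vertex of $B$ may be adjacent to a single vertex of $A$.

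The correct splitting criterion, which is what the paper uses, is universality in $G[A]$ rather than possession of a $B$-neighbour. Let $U_1$ be the vertices of $A$ adjacent to all of $A$ (in your notation, the isolated vertices of $\overline{G}[A]$) and $U_2=A\setminus U_1$, $s=|U_2|$. Every non-neighbour in $A$ of a vertex of $U_2$ again lies in $U_2$, so $\overline{G}[U_2]$ genuinely has minimum degree $\ge 1$ and Ore gives a dominating set $A_1$ of size at most $s/2$. Every vertex of $U_1$, being adjacent to all of $W\cup A$ minus itself, must have a neighbour in $B$ (else it is a twin of $W$), and choosing one such neighbour per vertex yields a set $A_2\subseteq B$ with $|A_2|\le\min\{|U_1|,|B|\}\le(|U_1|+|B|)/2$. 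Then $|A_1|+|A_2|\le s/2+(a-s+b)/2=k/2$, which is exactly the count you need and which your decomposition does not deliver. With that replacement (and the bookkeeping that enough $W$-singletons survive, which your inequality $\tau>k$ already gives), your argument becomes the paper's proof.
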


\begin{proof}
Let $W=\{ w_1,\dots , w_{\tau}\}$, $W_1=N(W)\setminus W=\{ x\in V(G) : d(x,W)=1 \}$ and $W_2=V\setminus N[W]=\{ x\in V(G) : d(x,W)\ge 2 \}$, 
and denote $r=|W_1|$, $t=|W_2|$. Observe that $\{ W , W_1 , W_2 \} $ is a partition of $V(G)$ and $k=r+t$.
Since $W$ is a set of twin vertices, we have that $xy\in E(G)$ for all $x\in W$ and $y\in W_1$.

\begin{figure}[htb]
\begin{center}
\includegraphics[width=0.38\textwidth]{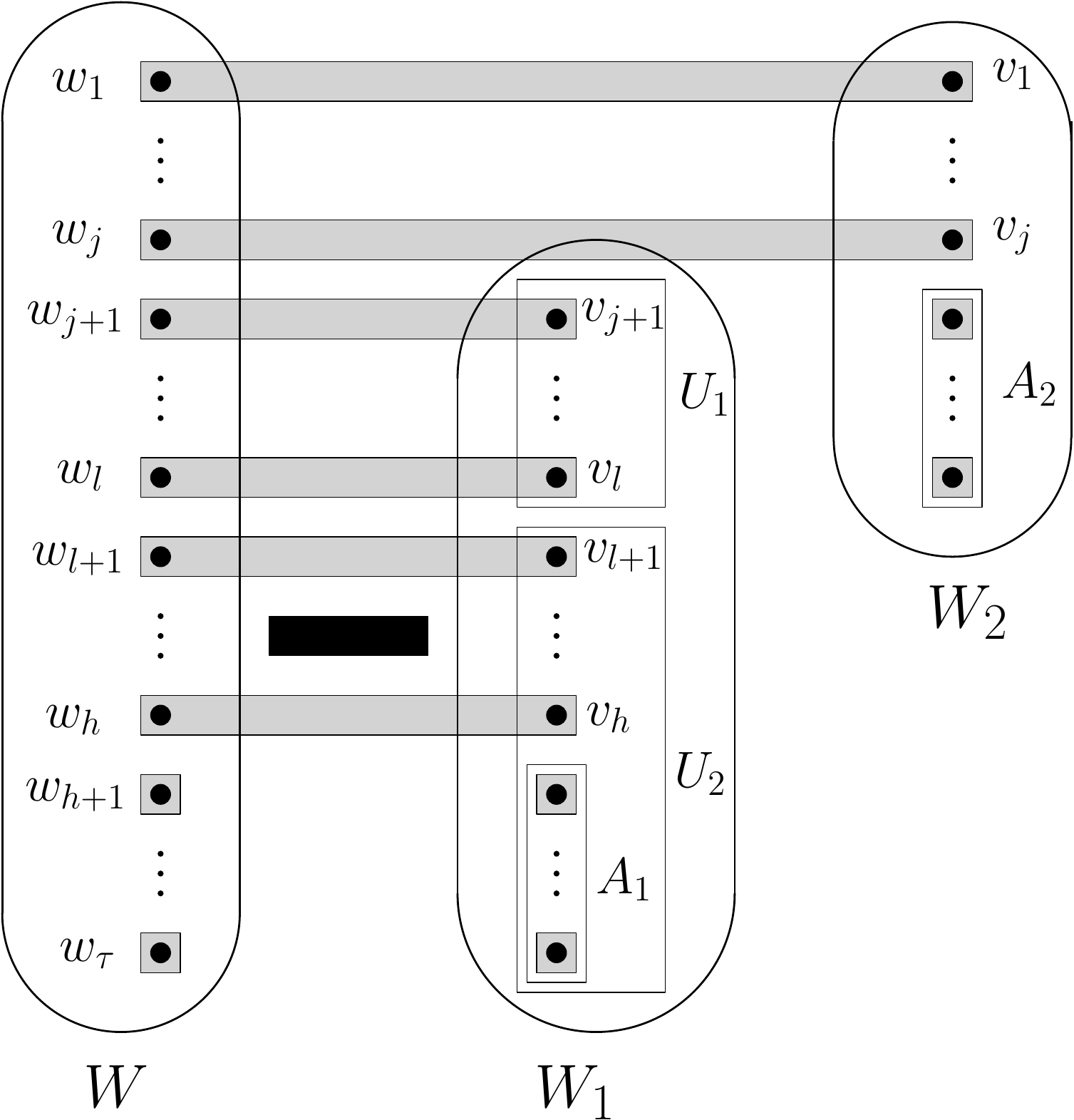}
\caption{In this figure, $W_1=N(W)\setminus W$ and $W_2=V\setminus N[W]$.}
\label{partsofpi}
\end{center}
\end{figure}

Consider the subsets $U_1=\{  x\in W_1 : \deg_{G[W_1]}(x)=r-1\}$ and $U_2=W_1\setminus U_1$ of $W_1$.
If $x\in U_1$, then there exists at least one vertex $y\in W_2$ such that $x y\in E(G)$, otherwise $x$ should be in $W$. 
Let us assign to each vertex $x\in U_1$  one vertex $y(x)\in W_2$ such that $x \, y(x)\in E(G)$ and consider the set $A_2=\{ y(x) : x\in U_1\}\subseteq W_2$. 
Observe that for different vertices $x,x'\in U_1$, the vertices $y(x)$, $y(x')$ are not necessarily different. By construction, $|A_2|\le |U_1|$.

Next, consider the subgraph $G[U_2]$ induced by the vertices of $U_2$. If $s=|U_2|$, then by definition, this subgraph has maximum degree at most $s-2$, and hence, the complement $\overline{G[U_2]}$ has minimum degree at least 1. 
It is well known that every graph without isolated vertices contains a dominating set of cardinality at most half the order
 (see \cite{ore}). 
Let $A_1$ be a dominating set of $\overline{G[U_2]}$ with $|A_1|\le s/2$.

If ($W_1\cup W_2) \setminus (A_1\cup A_2)= \{ v_1, \dots , v_h\}$, we show that the partition $\Pi$ defined as follows is a locating partition of $G$ (see Figure \ref{partsofpi}):
$$\Pi =
\{ \{ x \} : x\in A_1\} \cup
\{ \{ y \} : y\in A_2\} \cup
\{ \{ w_i,v_i \} : 1\le i\le h\} \cup
\{ \{ w_i \} : h+1 \le i \le \tau\} .
$$
Observe that $\Pi$ is well defined since $h< k< \frac{n}{2} \le \tau$.
To prove this claim, it is sufficient to show that for every $i\in \{ 1,\dots ,h\}$ there exists a part of $\Pi$ at different distance from $w_i$ and $v_i$. 
We distinguish the following cases:

\begin{enumerate}[i)]

 \item If $v_i\in U_1$, consider the vertex $y(v_i)\in A_2$  such that $v_i \, y(v_i)\in E(G)$. 

Then, $ d(w_i,  \{ y(v_i) \})= 2\not= 1=d(v_i, \{ y(v_i) \})$.
	
	\item If $v_i\in U_2\setminus A_1$, consider a vertex  $x\in A_1$ dominating $v_i$ in $\overline{G[U_2]}$, i.e., $x\, v_i\notin E(G)$.  
	
	Then, $ d(w_i,  \{ x \})= 1<d(v_i, \{ x \})$.

  \item If $v_i\in W_2\setminus A_2$, then $ d(w_i,  \{ w_{\tau} \})= 1<2\le d(v_i, \{ w_{\tau} \})$.
	
\end{enumerate}

Observe that 
$|A_2|\le |U_1| =r-s$ and $|A_2|\le |W_2| = t$, so we can deduce that
$|A_2|\le (r-s+t)/2$. Therefore, the partition dimension of $G$ satisfies:

  $\beta_p(G) \le |\Pi|=n- |(W_1\cup W_2)\setminus (A_1\cup A_2)|=n-[(r+t)- (|A_1|+ |A_2|)]=  n-k/2$
\end{proof}

\begin{prop}\label{cal0}
Let $G=(V,E)$ be a graph of order $n$ such that $\tau (G)=\tau > \frac{n}{2}$. 
If  its $\tau$-set $W$ satisfies $G[W]\cong \overline{K_{\tau}}$, then $\beta_p(G)=\tau$.
\end{prop}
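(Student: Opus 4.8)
The plan is to establish the two inequalities $\beta_p(G)\ge\tau$ and $\beta_p(G)\le\tau$ separately. The lower bound is immediate from Proposition \ref{twin.list}(7), so all the work goes into producing a locating partition with exactly $\tau$ parts. Before constructing it I would fix notation and record two distance identities. Since $G[W]\cong\overline{K_\tau}$, the set $W=\{w_1,\dots,w_\tau\}$ is independent and its vertices share a common neighborhood $N=N(W)$; because $G$ is connected and $\tau>n/2$ forces $W\ne V$, we have $N\ne\emptyset$. Write $V\setminus W=W_1\cup W_2$ with $W_1=N$ and $W_2=V\setminus N[W]$, and set $k:=|V\setminus W|=n-\tau$, which satisfies $k<\tau$ by the hypothesis $\tau>n/2$. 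For every $u\in V\setminus W$ the twin property (Proposition \ref{twin.list}(1)) gives $d(u,w_1)=\dots=d(u,w_\tau)$; call this common value $\delta(u)$, so that $\delta(u)=1$ exactly when $u\in W_1$ and $\delta(u)\ge2$ when $u\in W_2$. I also record that $d(w_i,w_l)=2$ whenever $i\ne l$, since a vertex of $N$ realizes distance $2$ while independence of $W$ rules out distance $1$.

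The partition I propose is obtained by pairing. Let $u_1,\dots,u_k$ be any enumeration of $V\setminus W$, and put $\Pi=\{S_1,\dots,S_\tau\}$ with $S_i=\{w_i,u_i\}$ for $1\le i\le k$ and $S_i=\{w_i\}$ for $k<i\le\tau$ (these singleton parts exist precisely because $k<\tau$). As each $w_i$ lies in a distinct part, vertices in different parts are automatically resolved, so, as observed in the introduction, it suffices to resolve the single pair $\{w_i,u_i\}$ inside each two-element part $S_i$. I would handle this according to the value of $\delta(u_i)$.

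If $\delta(u_i)\ne2$, any singleton part $\{w_l\}$ with $l>k$ (hence $l\ne i$) resolves the pair, because $d(w_i,w_l)=2$ while $d(u_i,w_l)=\delta(u_i)\ne2$. The delicate case is $\delta(u_i)=2$, where every $W$-singleton fails. Here I would invoke the maximality of $W$: the vertex $u_i$ lies in $W_2$, so it is not adjacent to $W$ and $u_i\notin N$; were $N(u_i)=N$, the vertices of $W\cup\{u_i\}$ would be pairwise twins, contradicting that $\tau$ is the maximum twin-class size. Thus $N(u_i)\ne N$, which forces either a vertex $q\in N\setminus N(u_i)$ or a vertex $q\in W_2\cap N(u_i)$. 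In the first situation the part containing $q$, which has the form $\{w_l,q\}$ with $l\ne i$, satisfies $d(w_i,S_l)=1$ (via $q\in N$) but $d(u_i,S_l)=2$ (since $q\notin N(u_i)$); in the second situation the same kind of part satisfies $d(w_i,S_l)=2$ but $d(u_i,S_l)=1$ (via the edge $u_iq$). Either way the pair $\{w_i,u_i\}$ is resolved.

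This shows $\Pi$ is locating, giving $\beta_p(G)\le\tau$ and hence $\beta_p(G)=\tau$. I expect the only genuine obstacle to be this last case, namely resolving a partner $u_i$ at distance exactly $2$ from $W$; the key idea there is that maximality of $W$ guarantees a neighbor that distinguishes $u_i$ from the whole class $W$, and this neighbor's part serves as the witness. Everything else reduces to bookkeeping with the two distance identities $d(w_i,w_l)=2$ and $d(u,w_l)=\delta(u)$.
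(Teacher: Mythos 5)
Your proof is correct and takes essentially the same route as the paper's: the identical pairing partition $S_i=\{w_i,u_i\}$ plus $W$-singletons, with each pair resolved by a singleton part when $d(u_i,W)\neq 2$ and, in the remaining case, by the part of a witness to $N(u_i)\neq N(W)$ (the paper's Cases 2.1 and 2.2), whose existence follows from the maximality of the twin class exactly as you argue. Your reorganization of the cases by the common distance $\delta(u_i)$ rather than by membership in $N(W)$ versus $V\setminus N[W]$ is only a cosmetic difference.
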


\begin{proof}
Let $W=\{ w_1,\dots ,w_{\tau}\}$, $V\setminus W=\{ v_1,\dots ,v_s\}$ and $N(W)=\{  v_1,\dots ,v_r \}$, where
$1\le r\le s< \tau$.
By Proposition \ref{twin.list}(7), $\beta_p(G) \ge \tau$. 
To prove the equality, consider the partition $\Pi =\{S_1,\dots ,S_{\tau} \}$, where
$Si=\{w_i,v_i\}$ if $1\le i\le s$, and $S_i=\{w_i\}$ if $s< i\le \tau$.
Observe that for any $i,j\in \{1,\dots ,\tau\}$ with $i\not= j$, $h\in \{1,\dots ,r\}$ and $k\in \{r+1,\dots ,s\}$,
$d(w_i,w_j)=2$, $d(v_h,w_j)=1$ and $d(v_k,w_j)=2$.
To prove that $\Pi$ is a locating partition of $G$,  we distinguish  cases.

 \textbf{Case 1}: $1\le i\le r$. Then, $d(w_i,S_{\tau})=d(w_i,w_{\tau})=2 \neq 1= d(v_i,w_{\tau})=d(v_i,S_{\tau})$.

 \textbf{Case 2}: $r<i\le s$. We distinguish two cases.

  \textbf{Case 2.1}: For some $k \in \{1,\ldots, r\}$, $v_iv_k\notin E(G)$.
	Consider the part $S_k=\{ w_k,v_k\}$. 
	On the one hand,  $d(w_i,S_k)=1$ since $d(w_i,v_k)=1$. 
	On the other hand,  $d(v_i,S_k)\ge 2$ since $d(v_i,w_k)\ge 2$ and $d(v_i,v_k)\ge 2$. 
	Therefore, $d(w_i,S_{k})\not= d(v_i,S_{k})$  (see Figure \ref{fig.twins}(a)).
	
\begin{figure}[htb]
\begin{center}
\includegraphics[width=0.7\textwidth]{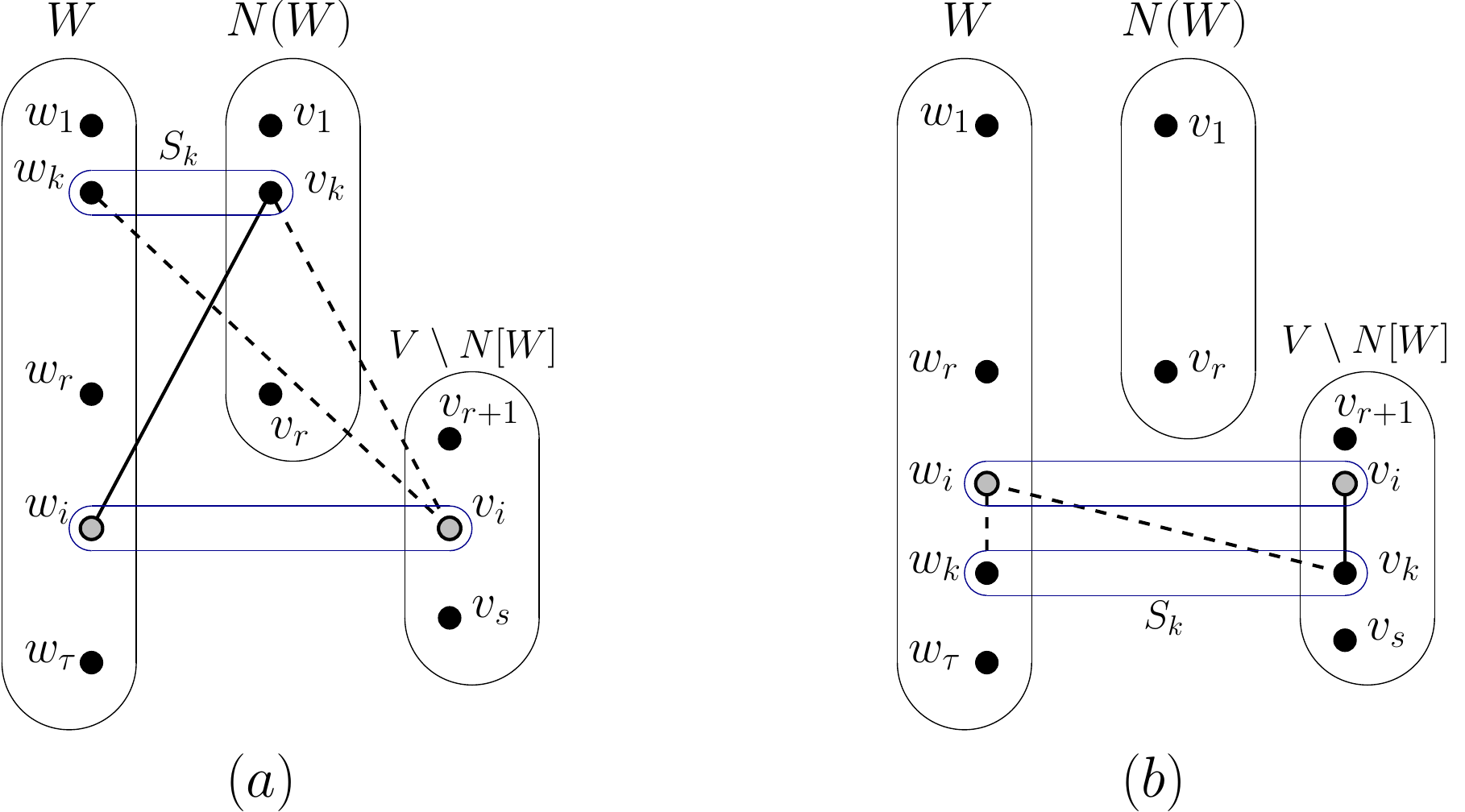}
\caption{In both cases, the part $S_k$ resolves the pair $w_i$, $v_i$.
Solid lines hold for adjacent vertices and dashed lines, for non-adjacent vertices. }\label{fig.twins}
\end{center}
\end{figure}
	
\textbf{Case 2.2}: Vertex  $v_i$ is adjacent to all vertices in $\{ v_1,\dots ,v_r\}$. As $v_i\notin W$, $v_iv_k\in E(G)$ for some $k\in \{r+1,\ldots, s\}$.
Consider the part $S_k=\{ w_k,v_k\}$. 
On the one hand,  $d(w_i,S_k)=2$ since $d(w_i,w_k)=2$ and $d(w_i,v_k)\ge 2$. 
On the other hand,  $d(v_i,S_k)=1$ since $d(v_i,v_k)=1$. 
Therefore, $d(w_i,S_k)\not= d(v_i,S_k)$  (see Figure \ref{fig.twins}(b)).
\end{proof}

As a direct consequence of Proposition \ref{cal}, Theorem \ref{kmedios} and  Proposition \ref{cal0}, the following result is derived.

\begin{thm} \label{txulisimo}
Let $G$ be a  graph of order $n$, other than $K_n$, such that $\tau(G)=\tau>\frac{n}{2}$. Then,  $\tau \le  \beta_p(G)  \le \frac{n+\tau}{2}$.
Moreover, if  $W$ is its $\tau$-set, then
\begin{enumerate}
\item $\beta_p(G)=\tau$  if and only if  $G[W]\cong\overline{K_{\tau}}$.
\item $\tau <  \beta_p(G)   \le \frac{n+\tau}{2}$ if and only  if   $G[W]\cong K_{\tau}$.
\end{enumerate}
\end{thm}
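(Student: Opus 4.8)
The plan is to treat the whole statement as an assembly of the three preceding results, exploiting the fact that when $\tau>\frac{n}{2}$ the $\tau$-set $W$ is unique and, by Proposition \ref{twin.list}(3), $G[W]$ is forced to be \emph{either} $K_\tau$ \emph{or} $\overline{K_\tau}$. This dichotomy is the structural backbone: each alternative pins down $\beta_p(G)$ to a disjoint range of values, so that the two biconditionals fall out automatically.

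First I would record the universal lower bound $\beta_p(G)\ge\tau$, which is exactly Proposition \ref{twin.list}(7); this gives the left inequality of $\tau\le\beta_p(G)\le\frac{n+\tau}{2}$ regardless of the shape of $G[W]$. Next I would settle the bookkeeping between the two parametrizations: writing $k=n-\tau$, the upper bound $n-k/2$ supplied by Theorem \ref{kmedios} is literally $\frac{n+\tau}{2}$, so no separate estimate is needed.

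Then I would split along the dichotomy. If $G[W]\cong\overline{K_\tau}$, Proposition \ref{cal0} gives $\beta_p(G)=\tau$ outright; since $\tau\le n$, this already satisfies $\beta_p(G)\le\frac{n+\tau}{2}$, so the global bounds hold in this case. If instead $G[W]\cong K_\tau$, Proposition \ref{cal} yields the strict bound $\beta_p(G)\ge\tau+1$, while Theorem \ref{kmedios} (rewritten as above) yields $\beta_p(G)\le\frac{n+\tau}{2}$, so altogether $\tau<\beta_p(G)\le\frac{n+\tau}{2}$.

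Finally I would read off the two equivalences. Because the two structural cases are mutually exclusive and exhaustive and produce the \emph{disjoint} outcomes $\beta_p(G)=\tau$ (empty case) versus $\beta_p(G)>\tau$ (complete case), each biconditional is immediate: $\beta_p(G)=\tau$ forces $G[W]\not\cong K_\tau$, hence $G[W]\cong\overline{K_\tau}$, and conversely; likewise $\tau<\beta_p(G)$ forces $G[W]\cong K_\tau$, and conversely. I do not anticipate a genuine obstacle, since all the analytic work was already carried out in Propositions \ref{cal}, \ref{cal0} and Theorem \ref{kmedios}. The only point deserving care is verifying that these prior results cover both shapes of $G[W]$ exhaustively (guaranteed by Proposition \ref{twin.list}(3)) and that their conclusions land in disjoint value ranges; this is precisely what upgrades the implications into genuine ``if and only if'' clauses.
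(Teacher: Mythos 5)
Your proposal is correct and matches the paper's argument exactly: the paper itself states that the theorem is ``a direct consequence of Proposition \ref{cal}, Theorem \ref{kmedios} and Proposition \ref{cal0},'' and your assembly — the clique/stable dichotomy for $G[W]$ from Proposition \ref{twin.list}(3), the lower bound from \ref{twin.list}(7), the identification $n-k/2=\frac{n+\tau}{2}$, and the disjointness of the two value ranges yielding both biconditionals — is precisely the intended reasoning, just written out in more detail than the paper bothers to.
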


\begin{cor} \label{xulisimo}
Let $G$ be a nontrivial graph of order $n$, other than $K_n$, such that $\beta_p(G)=n-h$ and $\tau(G)=\tau>\frac{n}{2}$. 
Let  $W$ be its $\tau$-set. 
Then, $n-2h \le \tau \le n-h-1$ if and only  if $G[W]\cong K_{\tau}$.
\end{cor}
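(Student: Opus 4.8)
The plan is to derive Corollary \ref{xulisimo} as an essentially immediate consequence of Theorem \ref{txulisimo}, which I am permitted to assume. Since $G \neq K_n$ and $\tau > \frac{n}{2}$, Theorem \ref{txulisimo} tells me that the induced subgraph $G[W]$ on the $\tau$-set is isomorphic to exactly one of $K_\tau$ or $\overline{K_\tau}$, and that these two cases are precisely distinguished by the value of $\beta_p(G)$: the empty case forces $\beta_p(G) = \tau$, while the complete case forces $\tau < \beta_p(G) \le \frac{n+\tau}{2}$. The statement to prove is a biconditional characterizing $G[W] \cong K_\tau$ by the numerical condition $n-2h \le \tau \le n-h-1$, where $h$ is defined through $\beta_p(G) = n-h$, so the whole task is to translate the inequality $\tau < \beta_p(G) \le \frac{n+\tau}{2}$ into the stated bounds on $\tau$.

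First I would substitute $\beta_p(G) = n-h$ into the chain $\tau < \beta_p(G) \le \frac{n+\tau}{2}$ from part (2) of Theorem \ref{txulisimo}. The left inequality $\tau < n-h$ rearranges to $\tau \le n-h-1$ (since both sides are integers), which is exactly the upper bound on $\tau$ in the corollary. The right inequality $n-h \le \frac{n+\tau}{2}$ rearranges, by clearing the denominator, to $2n-2h \le n+\tau$, i.e. $n-2h \le \tau$, which is exactly the lower bound. Thus the condition $G[W]\cong K_\tau$ implies $n-2h \le \tau \le n-h-1$ directly from part (2).

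For the converse direction I would argue by exclusion using the dichotomy. Suppose $n-2h \le \tau \le n-h-1$ holds. By Theorem \ref{txulisimo}, $G[W]$ is either $K_\tau$ or $\overline{K_\tau}$; I must rule out the latter. If instead $G[W]\cong \overline{K_\tau}$, then part (1) of the theorem gives $\beta_p(G)=\tau$, i.e. $n-h = \tau$, contradicting the assumed upper bound $\tau \le n-h-1$. Hence $G[W]\cong K_\tau$, completing the biconditional.

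I do not anticipate any genuine obstacle here, since all the substantive content is already packaged in Theorem \ref{txulisimo}; the corollary is pure arithmetic bookkeeping plus the case dichotomy. The only point requiring a little care is the passage between strict and non-strict inequalities, which is legitimate precisely because $\tau$, $n$, and $h$ are integers, so I would state that integrality remark explicitly to justify rewriting $\tau < n-h$ as $\tau \le n-h-1$. The mild subtlety worth flagging is that the hypotheses $G\neq K_n$ and $\tau>\frac{n}{2}$ are exactly what licenses invoking the clean two-case structure of Theorem \ref{txulisimo}, so I would note that these carry over verbatim into the corollary.
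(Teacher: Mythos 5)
Your proposal is correct and matches the paper's intent exactly: the corollary is stated there without proof as an immediate consequence of Theorem \ref{txulisimo}, and your derivation (substituting $\beta_p(G)=n-h$ into part (2) for the forward direction, and excluding the $\overline{K_\tau}$ case via part (1) for the converse) is precisely the intended argument. The integrality remark justifying $\tau<n-h\Leftrightarrow\tau\le n-h-1$ is the right point to make explicit.
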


\begin{cor} \label{f1234}
For every $n\ge7$, the graphs $F_1$, $F_2$, $F_3$ and $F_4$, displayed in Figure \ref{pdn-2 wrong}, satisfy $\beta(F_i)=n-3$.
\end{cor}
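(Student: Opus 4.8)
The statement should be read as $\beta_p(F_i)=n-3$, in accordance with the claim made in the Introduction, and the plan is to deduce all four equalities simultaneously from Theorem~\ref{txulisimo}(1). The entire argument rests on a single structural fact: each of $F_1,F_2,F_3,F_4$ has a false-twin class of cardinality exactly $n-3$; since $n-3>\frac n2$ precisely when $n>6$, for every $n\ge7$ this class is the unique $\tau$-set, and it induces an edgeless graph.

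First I would determine the twin structure of each graph. In $F_1\cong\overline{K_{n-3}}\vee(K_2+K_1)$ the $n-3$ vertices of $\overline{K_{n-3}}$ are pairwise non-adjacent and each is adjacent exactly to the three vertices arising from $K_2+K_1$, so they constitute a false-twin class of size $n-3$; the two vertices of $K_2$ form a twin class of size $2$ and the vertex of $K_1$ lies in its own class, whence $\tau(F_1)=n-3$. In $F_2\cong K_{2,n-2}-e$, with parts $\{a,b\}$, $\{c_1,\dots,c_{n-2}\}$ and missing edge $ac_1$, the vertices $c_2,\dots,c_{n-2}$ are mutually non-adjacent with common neighborhood $\{a,b\}$ and thus form a false-twin class of size $n-3$, while $a$, $b$ and $c_1$ each lie in a singleton class. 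In $F_3\cong K_1\vee(\overline{K_{n-3}}+K_2)$, that is, $K_{1,n-1}$ with one edge added between two of its leaves, the $n-3$ remaining leaves form a false-twin class of size $n-3$, the two adjacent leaves a twin class of size $2$, and the center is alone. Applying the same inspection to $F_4$ would again exhibit a false-twin class of size $n-3$ as its unique largest twin class.

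In each case the $\tau$-set $W$ satisfies $\tau=|W|=n-3>\frac n2$ and $G[W]\cong\overline{K_{n-3}}=\overline{K_{\tau}}$, and none of the graphs is $K_n$. Theorem~\ref{txulisimo}(1) then gives at once $\beta_p(F_i)=\tau=n-3$ for all $i\in\{1,2,3,4\}$. The only point requiring care is the twin-class bookkeeping: one must verify in each graph that the edgeless class of size $n-3$ is indeed maximal, so that $\tau$ equals $n-3$ and not more, and that none of the three remaining vertices merges into it. Once this is checked the conclusion is immediate, and in particular no locating partition needs to be constructed explicitly.
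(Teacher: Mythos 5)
Your proposal is correct and matches the paper's intended derivation: the corollary is stated without separate proof precisely because it follows from Theorem~\ref{txulisimo}(1) (equivalently Proposition~\ref{cal0}) applied to the unique $\tau$-set of size $n-3>\frac{n}{2}$ inducing $\overline{K_{n-3}}$ in each $F_i$, which is exactly your argument. Your twin-class computations for $F_1$, $F_2$, $F_3$ are accurate, and the only loose end is the one you flag yourself, namely that $F_4$ is specified only in Figure~\ref{pdn-2 wrong} and so its twin class of size $n-3$ must be read off from that figure rather than from the text.
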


\section{Partition dimension almost the order}

Our aim in this section is to completely characterize the set of all graphs of order $n\ge9$ such that $\beta_p(G)=n-2$.
This issue was already approached in \cite{tom08}, but, as remarked in our introductory section, the list of 23 graphs presented for every order $n\ge9$ turned out to be  wrong.

As was shown in Proposition \ref{twin.list}(8), it is clear that the only graphs whose partition dimension equals its order, are the complete graphs. The next result, along with Proposition \ref{twin.list}(9) and Proposition \ref{prop.twingran}, allows us to characterize, in a pretty simple way,  all connected graphs of order $n$ with partition dimension $n-1$, a result already proved in \cite{ChaSaZh00} for $n\ge3$.for the case $\beta_p(G)=n-1$,

\begin{prop}\label{n-1iff}
Let $G$ be a  graph of order $n\ge 9$ and twin number $\tau$, and let $W$ be a $\tau$-set. 
Then,  $\beta_p(G)=n-1$ if and only if $G$ satisfies one of the following conditions: 
\begin{enumerate}[(i)]
\item $\tau=n-1$.
\item $\tau=n-2$ and $G[W]\cong K_{n-2}$.
\end{enumerate}
\end{prop}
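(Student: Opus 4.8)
The plan is to derive everything from the two master results of this section, Theorem~\ref{thm.twinpetit} and Theorem~\ref{txulisimo}, rather than from the older list in Theorem~\ref{mdpd}; this is precisely what makes the characterization ``pretty simple''. Throughout I would keep in mind the basic bound $\tau\le\beta_p(G)$ from Proposition~\ref{twin.list}(7) together with the fact that $\beta_p(G)=n$ holds exactly when $G=K_n$ (Proposition~\ref{twin.list}(8)).

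For the forward (``only if'') implication, assume $\beta_p(G)=n-1$. First I would rule out a small twin number: since $n\ge 9$, Theorem~\ref{thm.twinpetit} says that $\tau\le\frac{n}{2}$ forces $\beta_p(G)\le n-3$, contradicting $\beta_p(G)=n-1$; hence $\tau>\frac{n}{2}$. Also $\beta_p(G)=n-1\ne n$ gives $G\ne K_n$ by Proposition~\ref{twin.list}(8), so Theorem~\ref{txulisimo} applies and yields $n-1=\beta_p(G)\le\frac{n+\tau}{2}$, i.e.\ $\tau\ge n-2$. Combined with $\tau\le\beta_p(G)=n-1$, this pins down $\tau\in\{n-2,n-1\}$. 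If $\tau=n-1$ we are in case (i). If $\tau=n-2$, then $\beta_p(G)=n-1>\tau$, so the equality $\beta_p(G)=\tau$ fails; since a twin set induces a complete or empty subgraph (Proposition~\ref{twin.list}(3)), the dichotomy in Theorem~\ref{txulisimo} leaves only the possibility $G[W]\cong K_\tau=K_{n-2}$, which is case (ii).

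For the converse, I would simply run Theorem~\ref{txulisimo} in the other direction. If $\tau=n-1$ (case (i)), then $G\ne K_n$ (otherwise $\tau=n$), so $\beta_p(G)\ne n$ by Proposition~\ref{twin.list}(8), while $\beta_p(G)\ge\tau=n-1$ by Proposition~\ref{twin.list}(7); hence $\beta_p(G)=n-1$. If $\tau=n-2$ and $G[W]\cong K_{n-2}$ (case (ii)), then $\frac{n}{2}<\tau<n$ for $n\ge 9$, so $G\ne K_n$ and Theorem~\ref{txulisimo}(2) gives $\tau<\beta_p(G)\le\frac{n+\tau}{2}=n-1$, forcing $\beta_p(G)=n-1$.

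I do not expect a serious obstacle once the section's machinery is in place: the whole argument is an accounting of the inequalities $\tau\le\beta_p(G)\le\frac{n+\tau}{2}$ against the value $\beta_p(G)=n-1$, supplemented by the complete/empty dichotomy of Theorem~\ref{txulisimo}. The one delicate point is making sure the hypotheses of the invoked theorems genuinely hold at each step --- namely $n\ge 9$ for Theorem~\ref{thm.twinpetit}, and both $\tau>\frac{n}{2}$ and $G\ne K_n$ for Theorem~\ref{txulisimo} --- which is why I would verify $G\ne K_n$ and $\tau>\frac{n}{2}$ explicitly before each application.
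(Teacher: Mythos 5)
Your proof is correct and follows essentially the same route as the paper's: both deduce $\tau>\frac{n}{2}$ from Theorem~\ref{thm.twinpetit}, pin down $\tau\in\{n-2,n-1\}$ and the completeness of $G[W]$ via the inequalities and dichotomy of Theorem~\ref{txulisimo} (the paper phrases this through Corollary~\ref{xulisimo} and Proposition~\ref{cal}, which are the same machinery), and then verify the converse by the same bounds. No substantive difference.
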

\begin{proof} 
Suppose that $\beta_p(G)=n-1$. 
Then, by Theorem \ref{thm.twinpetit}, $\tau>\frac{n}{2}$.
Thus, by  Theorem \ref{txulisimo} and Corollary \ref{xulisimo}, $n-2 \le \tau \le n-1$ and if $\tau=n-2$, then $G[W]\cong K_{n-2}$.

If $\tau=n-1$, i.e., if $G\cong K_{1,n-1}$, then $\beta_p(G)=n-1$. 
If $\tau=n-2$ and $G[W]\cong K_{n-2}$ then, according to Proposition \ref{cal}, $\beta_p(G)\ge \tau +1=n-1$, which means that $\beta_p(G)=n-1$, as $G$ is not the complete graph.
\end{proof}

\begin{cor} (\cite{ChaSaZh00})\label{pretty}
Let $G$ be a  graph of order $n\ge 9$.
Then, $\beta_p(G)=n-1$ if and only if $G$ is one of the following graphs:

\begin{enumerate}

   \item the star $K_{1,n-1}$.
	
   \item the complete split graph $K_{n-2} \vee \overline{K_2}$ obtained by removing an edge $e$ from the complete  graph $K_n$  (see Figure \ref{bpn-2}(a)).
	
   \item  the graph $K_1\vee (K_1+K_{n-2})$ obtained by attaching a leaf to the complete graph $K_{n-1}$  (see Figure \ref{bpn-2}(b)).
	
\end{enumerate}

\end{cor}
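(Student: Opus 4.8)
The statement to be proved is Corollary~\ref{pretty}: a graph $G$ of order $n\ge 9$ satisfies $\beta_p(G)=n-1$ if and only if $G$ is one of the three explicitly named graphs. The plan is to deduce this almost entirely from the structural characterization already established in Proposition~\ref{n-1iff}, which reduces the condition $\beta_p(G)=n-1$ to a statement purely about the twin number $\tau$ and the induced subgraph on a $\tau$-set $W$. Since Proposition~\ref{n-1iff} tells us that $\beta_p(G)=n-1$ holds exactly when either $\tau=n-1$, or $\tau=n-2$ with $G[W]\cong K_{n-2}$, the remaining work is to translate these two algebraic conditions into the concrete graph-isomorphism types in the list.

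\textbf{Key steps.} First I would invoke Proposition~\ref{n-1iff} to split into the two cases. In the case $\tau=n-1$, I would appeal to Proposition~\ref{twin.list}(9), which states that $\tau(G)=n-1$ holds if and only if $G\cong K_{1,n-1}$; this immediately gives item~1 of the list. In the case $\tau=n-2$ with $G[W]\cong K_{n-2}$, I would turn to Proposition~\ref{prop.twingran}, the characterization of all graphs with $\tau(G)=n-2$. That proposition yields four candidate graphs (a)--(d), but among these the two with $G[W]\cong K_{n-2}$ are precisely (a), the complete split graph $K_{n-2}\vee\overline{K_2}$, and (b), the graph $K_1\vee(K_1+K_{n-2})$; the other two, (c) $K_{2,n-2}$ and (d) $\overline{K_{n-2}}\vee K_2$, have a $\tau$-set inducing an empty graph $\overline{K_{n-2}}$ and are therefore excluded by the hypothesis $G[W]\cong K_{n-2}$. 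This recovers items~2 and~3. For the converse direction, I would simply verify that each of the three listed graphs satisfies the hypotheses of Proposition~\ref{n-1iff} and hence has partition dimension $n-1$: the star satisfies $\tau=n-1$, while the two split-type graphs satisfy $\tau=n-2$ with a complete $\tau$-set.

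\textbf{Main obstacle.} Because the heavy lifting has been front-loaded into Proposition~\ref{n-1iff}, Proposition~\ref{prop.twingran}, and Proposition~\ref{twin.list}, this corollary is essentially a bookkeeping argument and presents no genuine difficulty. The one point requiring care is the correct matching of the abstract conditions $G[W]\cong K_{n-2}$ versus $G[W]\cong\overline{K_{n-2}}$ against the four graphs of Proposition~\ref{prop.twingran}: one must be certain that the $\tau$-set $W$ of each candidate induces a clique rather than a stable set. For graphs (a) and (b) the $\tau$-set is the set of $n-2$ mutually adjacent vertices forming the $K_{n-2}$, so $G[W]\cong K_{n-2}$ as required, whereas for (c) and (d) the $\tau$-set is the independent part of the join, which is why they drop out. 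Once this identification is made explicit, the equivalence follows directly, and I would note that the equivalence recovers exactly the classical result of \cite{ChaSaZh00} for the range $n\ge 9$.
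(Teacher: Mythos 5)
Your proposal is correct and follows exactly the route the paper intends: the corollary is stated immediately after Proposition~\ref{n-1iff} precisely so that it follows by combining that proposition with Proposition~\ref{twin.list}(9) for the case $\tau=n-1$ and with Proposition~\ref{prop.twingran} for the case $\tau=n-2$, keeping only the two graphs (a) and (b) whose $\tau$-set induces a clique. Your identification of which of the four graphs in Proposition~\ref{prop.twingran} have $G[W]\cong K_{n-2}$ is accurate, so nothing is missing.
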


Next, we approach the case $\beta_p(G)=n-2$.

\begin{defi}
{\rm Let $G=(V,E)$ a graph such that $\tau(G)=\tau$.
Let $W$ be a $\tau$-set of $G$ such that $G[W]\cong K_{\tau}$.
A vertex $v\in V \setminus W$ is said to be a \emph{$W$-distinguishing vertex} of $G$ if and only if, for every vertex $z \in N(W) \setminus W$,
$d(v,z) \neq d(v,W)$.}
\end{defi}

\begin{lem}\label{lema3}
Let $G=(V,E)$ be a nontrivial graph of order $n$ such that $\tau(G)=\tau> \frac{n}{2}$.
Suppose that its $\tau$-set $W$ satisfies $G[W]\cong K_{\tau}$. Then, the following statements hold:
\begin{enumerate}

\item[(a)] If $G$ contains a $W$-distinguishing vertex, then $\beta_p(G)=\tau+1$.

\item[(b)] If $G[N(W)\setminus W]$ contains an isolated  vertex, then $\beta_p(G)=\tau+1$.

\item[(c)] If $|N(W) \setminus W|=1$, then $\beta_p(G)=\tau+1$.

\item[(d)]
If $G[N(W)\setminus W]$ contains a universal vertex $v$, then $v$ is adjacent to at least one vertex of $V\setminus N [W]$.

\end{enumerate}
\end{lem}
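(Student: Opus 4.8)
The four parts concern a graph $G$ whose $\tau$-set $W$ induces a clique $K_\tau$ with $\tau>\frac{n}{2}$, so that (by Theorem~\ref{txulisimo}) we know $\tau<\beta_p(G)\le\frac{n+\tau}{2}$; in particular $\beta_p(G)\ge\tau+1$ always holds here. Thus for parts (a), (b) and (c) the work is entirely in proving the upper bound $\beta_p(G)\le\tau+1$, which I will do in each case by exhibiting an explicit locating partition with $\tau+1$ parts. The natural template, borrowed from the proof of Proposition~\ref{cal0} and Theorem~\ref{kmedios}, is to pair each non-$W$ vertex with a distinct $w_i\in W$ and reserve one extra singleton part to separate the pairs. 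Writing $W=\{w_1,\dots,w_\tau\}$ and $V\setminus W=\{v_1,\dots,v_s\}$ with $s=n-\tau<\tau$, I would take the partition whose parts are $\{w_i,v_i\}$ for $1\le i\le s$, the singletons $\{w_i\}$ for $s<i\le\tau$, and one further singleton part carved out to act as a resolver.

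For part (a): if $v$ is a $W$-distinguishing vertex, put $\{v\}$ as its own part and pair the remaining non-$W$ vertices with distinct $w_i$'s. To check this is locating it suffices to separate, inside each pair $\{w_i,v_j\}$, the clique-vertex $w_i$ from $v_j$. I would separate them using $v$ itself together with the distinguishing hypothesis: since $d(v,W)\ne d(v,z)$ for every $z\in N(W)\setminus W$, the distance from $v$ to $w_i$ (which is $d(v,W)$, because $W$ is a clique and $v$ sees all of $W$ or none of it via twin-adjacency) differs from the distance from $v$ to any neighbour of $W$, and a short case analysis on whether $v_j\in N(W)\setminus W$ or $v_j\in V\setminus N[W]$ finishes the separation. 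For part (b): if $z_0\in N(W)\setminus W$ is isolated in $G[N(W)\setminus W]$, I would argue $z_0$ is itself $W$-distinguishing—its only neighbours in $N(W)$ lie in $W$, so $z_0$ sees no other vertex of $N(W)\setminus W$ at distance $1$, making the required inequality hold—and then invoke (a). Part (c) is the degenerate case $|N(W)\setminus W|=1$: the single vertex of $N(W)\setminus W$ is vacuously isolated in $G[N(W)\setminus W]$, so again (b) (hence (a)) applies.

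Part (d) is different in character: it is a purely structural (non-partition) claim, and I expect it to be the easiest but the one requiring the most careful use of the twin hypothesis. Suppose $v\in N(W)\setminus W$ is universal in $G[N(W)\setminus W]$ but, for contradiction, $v$ has no neighbour in $V\setminus N[W]$. Then $N(v)\subseteq W\cup(N(W)\setminus W)$, and since $v$ is adjacent to all of $W$ (it lies in $N(W)$ and $W$ is a twin set, so by Proposition~\ref{twin.list}(4) $v$ is adjacent to all of $W$) and to all of $N(W)\setminus W\setminus\{v\}$, one computes that $N(v)=W\cup(N(W)\setminus W\setminus\{v\})=N[W]\setminus\{v\}$. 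I would then compare $N(v)$ with the neighbourhood of any $w_i\in W$: since $w_i$ is adjacent to all of $W\setminus\{w_i\}$ (clique) and to all of $N(W)\setminus W$, its neighbourhood is also exactly $N[W]\setminus\{w_i\}$. Hence $v$ and $w_i$ have the same neighbours apart from themselves, i.e.\ $v$ is a twin of every vertex of $W$, so $W\cup\{v\}$ is a twin set of size $\tau+1$, contradicting the maximality of $\tau$.

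The main obstacle is the distance bookkeeping in part (a): one must verify that the single resolver $\{v\}$ simultaneously separates every pair $\{w_i,v_j\}$, and this requires knowing that $d(v,w_i)$ takes a value ($=d(v,W)$) distinct from the distances $d(v,v_j)$ for all paired $v_j$. The distinguishing condition handles the paired vertices lying in $N(W)\setminus W$; for paired vertices $v_j\in V\setminus N[W]$ one uses instead that $d(v,v_j)\ge 2> 1=d(v,W)$ when $v\in N(W)$, or a symmetric argument. Organizing these subcases cleanly, and confirming that the extra part $\{v\}$ does not collide with any singleton $\{w_i\}$, is where the care is needed; the rest is the now-standard pairing construction.
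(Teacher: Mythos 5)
Your strategy coincides with the paper's at every step: the lower bound $\beta_p(G)\ge\tau+1$ from Proposition~\ref{cal}, the pairing partition with $\{v\}$ as an extra singleton for (a), the reduction of (b) and (c) to (a) by observing that an isolated vertex of $G[N(W)\setminus W]$ is $W$-distinguishing, and the twin-maximality contradiction for (d). Parts (b), (c) and (d) are correct as you state them.

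There is, however, a genuine flaw in your verification of (a). You declare that ``the single resolver $\{v\}$ simultaneously separates every pair $\{w_i,v_i\}$,'' and for a paired vertex $v_i\in V\setminus N[W]$ you justify this by asserting $d(v,v_i)\ge 2>1=d(v,W)$. That inequality does not follow from anything: the $W$-distinguishing hypothesis only controls $d(v,z)$ for $z\in N(W)\setminus W$ and says nothing about vertices outside $N[W]$, and membership of $v_i$ in $V\setminus N[W]$ restricts its adjacencies to $W$, not to $v$. So $v$ may well be adjacent to such a $v_i$, giving $d(v,v_i)=1=d(v,w_i)$, and then $\{v\}$ fails to resolve that pair (a concrete instance: $W$ a clique, $N(W)\setminus W=\{v,u\}$ with $vu\notin E$, and $v_i$ adjacent only to $v$). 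The paper avoids this by resolving those pairs with a \emph{different} part: since $n-\tau-1<\tau$, some part is a singleton $\{w_j\}\subseteq W$, and then $d(w_i,\{w_j\})=1$ because $G[W]$ is a clique while $d(v_i,\{w_j\})\ge 2$ because $v_i\notin N[W]$. Keep your use of $\{v\}$ only for the pairs whose non-$W$ member lies in $N(W)\setminus W$, use a singleton $\{w_j\}$ for the pairs whose non-$W$ member lies outside $N[W]$, and the proof closes.
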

\begin{proof} 
\begin{enumerate}

\item[(a)]
Let $v$ be a $W$-distinguishing vertex. Set $W=\{w_1,w_2,\ldots,w_{\tau}\}$ and $V\setminus W=\{v, z_1,\ldots,z_{r}\}$, where $r=n-\tau-1<\tau$.
Take the partition
$\Pi=\{S_1,\ldots,S_{\tau+1}\}$, where:
$ S_1=\{w_1,z_1\}, \ldots, S_r=\{w_r,z_r\}, S_{r+1}=\{w_{r+1}\}, \ldots, S_{\tau}=\{w_{\tau}\}, S_{\tau+1}=\{v\}$.
Notice that if $z_i\in N(W)\setminus W$, then
$$d(z_i,S_{\tau +1})=d(z_i,v)\not= d(v,W)=d(v,w_i)=d(w_i,S_{\tau +1}),$$
and if $z_i\notin N(W)$, then for any $j\in \{1,\dots,\tau \}$ such that $i\not= j$ we have
$$d(z_i,S_{j})=d(z_i,w_j)>1= d(w_i,w_j)=d(w_i,S_{j}).$$
Thus, $r(w_i|\Pi)\neq r(z_i|\Pi)$ for every $i\in\{1,\ldots,r\}$, and consequently $\Pi $ is a locating partition of $G$.

\item[(b)] 
If $v$ is an isolated vertex in $G[N(W)\setminus W]$, then for every vertex $z \in N(W) \setminus W$,
$d(v,z)=2 \neq 1 = d(v,W)$.
Hence, $v$ is a $W$-distinguishing vertex of $G$,
and, according to item (a), $\beta_p(G)=\tau +1$.

\item[(c)] In this case,
the only vertex in $N(W)\setminus W$ is isolated in $G[N(W)\setminus W]$ and, according to item (b), $\beta_p(G)=\tau +1$.

\item[(d)]
Notice that if $v$ is universal in $G[N(W)\setminus W]$ and has no neighbor in $V\setminus N[W]$, then $v$ would be a twin of any vertex in $W$, which is a contradiction.
\end{enumerate}
\vspace{-.8cm}\end{proof}

As a straightforward consequence of item (b) of the previous lemma, the  following  holds.

\begin{cor} \label{f56}
For every $n\ge 9$, the graphs  $F_5$ and $F_6$, displayed in Figure \ref{pdn-2 wrong}, satisfy $\beta(F_i)=n-3$.
\end{cor}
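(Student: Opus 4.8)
The plan is to read Corollary~\ref{f56} directly off Lemma~\ref{lema3}(b), so the whole argument reduces to verifying, for each of $F_5$ and $F_6$, the three hypotheses of that item: a $\tau$-set $W$ with $G[W]\cong K_\tau$, the regime $\tau>\frac n2$, and an isolated vertex in $G[N(W)\setminus W]$.

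First I would pin down the twin structure. In each of $F_5$ and $F_6$ there is a set $W$ of $n-4$ vertices that are pairwise adjacent and share exactly the same neighborhood outside $W$; these are \emph{true} twins, so $G[W]\cong K_{n-4}$. The four remaining vertices split only into twin classes of size at most $2$, and none of them merges with $W$, so $\tau(F_i)=|W|=n-4$ and, since $n-4>\frac n2$ for $n\ge9$, $W$ is the unique $\tau$-set. This places us exactly in the hypotheses of Lemma~\ref{lema3}, with $\tau=n-4>\frac n2$ and $G[W]\cong K_\tau$.

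Next I would locate the isolated vertex. Because the $n-4$ vertices of $W$ are joined to all four remaining vertices (the thick segment in Figure~\ref{pdn-2 wrong}), we have $N(W)\setminus W=V\setminus W$, a set of exactly four vertices, and $G[N(W)\setminus W]$ is precisely the $4$-vertex graph sitting opposite the clique. For $F_5\cong K_{n-4}\vee(P_3+K_1)$ this induced graph is $P_3+K_1$, whose $K_1$ summand is an isolated vertex; reading $F_6$ off the figure exhibits likewise a vertex of $N(W)\setminus W$ adjacent to none of the other three. With an isolated vertex of $G[N(W)\setminus W]$ in hand, Lemma~\ref{lema3}(b) gives at once $\beta_p(F_i)=\tau+1=(n-4)+1=n-3$.

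The only genuine work lies in the bookkeeping of the first two paragraphs, and this is also the step most prone to error: one must check that the $n-4$ repeated vertices really induce $K_\tau$ (true twins) and not $\overline{K_\tau}$. This distinction is exactly what separates the hypotheses of Lemma~\ref{lema3} from those of Proposition~\ref{cal0}; had $W$ been an independent set, Proposition~\ref{cal0} would instead force $\beta_p=\tau=n-4$, and the corollary would fail. Once the clique structure of $W$ and the isolated vertex of $G[N(W)\setminus W]$ are confirmed, the conclusion $\beta_p(F_i)=n-3$ is immediate from item~(b).
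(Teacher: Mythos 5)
Your proposal is correct and is exactly the paper's argument: the paper derives Corollary~\ref{f56} as an immediate consequence of Lemma~\ref{lema3}(b), and your verification of its hypotheses (a $\tau$-set $W$ of size $n-4$ inducing $K_{n-4}$, the regime $\tau>\frac n2$, and the isolated vertex of $G[N(W)\setminus W]$ coming from the $K_1$ summand) is precisely the bookkeeping the paper leaves implicit. Your insistence that $W$ induce $K_\tau$ rather than $\overline{K_\tau}$ is well placed: the caption of Figure~\ref{pdn-2 wrong} writes $F_5\cong\overline{K_{n-4}}\vee(P_3+K_1)$, which must be read as $K_{n-4}\vee(P_3+K_1)$ (as the identification $F_5=K_n-E(K_{1,3}+e)$ in Table~\ref{tab.equivalencias} confirms), since otherwise Proposition~\ref{cal0} would give $\beta_p=n-4$ and the corollary would fail.
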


\begin{lem}\label{lema4}
Let $G=(V,E)$ be a graph of order $n\ge9$ such that $\tau(G)=\tau=n-4$
and its $\tau$-set $W$ satisfies $G[W]\cong K_{\tau}$.
If $|N(W) \setminus W|=2$, then $\beta_p(G)=n-3$.
\end{lem}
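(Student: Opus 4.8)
The plan is to pin $\beta_p(G)$ between two bounds that both equal $n-3$. Since $\tau=n-4>\frac{n}{2}$ for $n\ge 9$ and $G\ne K_n$, Proposition \ref{cal} gives the lower bound $\beta_p(G)\ge\tau+1=n-3$ at once, so the entire task is to exhibit, for every such $G$, a locating partition with exactly $\tau+1=n-3$ parts. The skeleton of such a partition is forced: the $\tau$ twins of $W$ must lie in $\tau$ distinct parts (Proposition \ref{twin.list}(2)), leaving a single part free of $W$-vertices. Because $|N[W]|=|W|+|N(W)\setminus W|=(n-4)+2=n-2$, the set $W_2:=V\setminus N[W]$ has exactly two vertices; write $N(W)\setminus W=\{a,b\}$ and $W_2=\{c,d\}$. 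I would record at the outset the distance facts that drive everything: by Proposition \ref{twin.list}(1),(4) each vertex of $\{a,b\}$ is at distance $1$ from all of $W$, while each vertex of $W_2$ is at distance $\ge 2$ from all of $W$. In particular a singleton $\{w_i\}$ always resolves a pair formed by a $W$-vertex and a $W_2$-vertex, but never resolves a pair formed by a $W$-vertex and a vertex of $\{a,b\}$.

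First I would dispose of the case $ab\notin E(G)$: then $a$ is isolated in $G[N(W)\setminus W]$, and Lemma \ref{lema3}(b) gives $\beta_p(G)=\tau+1=n-3$ directly. The main case is $ab\in E(G)$, i.e.\ $G[N(W)\setminus W]\cong K_2$, where both $a$ and $b$ are universal in $G[\{a,b\}]$, so Lemma \ref{lema3}(d) forces each of $a,b$ to have a neighbour in $W_2$. This is the key structural input and, I expect, the crux: it supplies the $W_2$-vertices adjacent to $a$ and to $b$ that are the only possible resolvers of the pairs $\{w_i,a\}$ and $\{w_j,b\}$.

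I would then split according to whether $c$ and $d$ are twins. If they are not, I take $\Pi=\{\{w_1,a\},\{w_2,b\},\{c,d\},\{w_3\},\dots,\{w_\tau\}\}$: since $a$ (resp.\ $b$) has a neighbour among $\{c,d\}$, the part $\{c,d\}$ lies at distance $1$ from $a$ (resp.\ $b$) but at distance $\ge 2$ from every $w_i$, resolving $\{w_1,a\}$ and $\{w_2,b\}$; and as $c,d$ are not twins, some vertex of $\{a,b\}$ is adjacent to exactly one of them, so the corresponding part $\{w_1,a\}$ or $\{w_2,b\}$ resolves $\{c,d\}$. If instead $c,d$ are twins, a short check shows both $a$ and $b$ must be adjacent to both $c$ and $d$ (since $c,d$ have identical neighbourhoods in their only possible external neighbour set $\{a,b\}$, and each of $a,b$ has a neighbour in $\{c,d\}$); then I take $\Pi=\{\{w_1,a\},\{w_2,b\},\{c\},\{w_3,d\},\{w_4\},\dots,\{w_\tau\}\}$, where $\{c\}$ resolves both $\{w_1,a\}$ and $\{w_2,b\}$ because $a,b\sim c$, while any remaining $W$-singleton resolves $\{w_3,d\}$ because $d\in W_2$. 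Each displayed partition has $\tau+1=n-3$ parts, giving $\beta_p(G)\le n-3$, and together with the lower bound this proves $\beta_p(G)=n-3$.
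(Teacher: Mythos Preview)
Your proof is correct and follows essentially the paper's approach: the lower bound via Proposition~\ref{cal}, the case $ab\notin E$ via Lemma~\ref{lema3}(b), Lemma~\ref{lema3}(d) to force $W_2$-neighbours when $ab\in E$, and the same main locating partition $\{\{w_1,a\},\{w_2,b\},\{c,d\},\{w_3\},\dots,\{w_\tau\}\}$. The only cosmetic difference is in the residual case split: the paper asks whether some $z\in W_2$ is adjacent to both $a$ and $b$ (then $z$ is $W$-distinguishing and Lemma~\ref{lema3}(a) finishes), whereas you ask whether $c,d$ are twins and give an explicit partition in that case---but your twin case is precisely the configuration in which both $c$ and $d$ are $W$-distinguishing, so the two splits coincide.
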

\begin{proof} 
Let $W=\{w_1,\ldots,w_{n-4}\}$, $V\setminus W=\{z_1,z_2,z_3,z_4\}$ and $N(W)\setminus W=\{z_1,z_2\}$.

If $z_1z_2\not\in E$, then both $z_1$ and $z_2$ are isolated vertices in $G[N(W)\setminus W]$.
Hence, by Lemma \ref{lema3}(b),  $\beta_p(G)=\tau(G)+1=n-3$.

Suppose that $z_1z_2\in E$. 
According to Lemma \ref{lema3}(b), both $z_1$ and $z_2$ are adjacent to at least one vertex of  $V \setminus N(W)=\{z_3,z_4\}$. 
If, for some $i\in\{3,4\}$, either $\{z_1z_i,z_2z_i\}\subset E$, then $z_i$ is a $W$-distinguishing vertex, 
i.e., $\beta_p(G)=\tau+1=n-3$.

Assume thus that $\{z_1z_3,z_2z_4\}\subset E$ and $\{z_1z_4,z_2z_3\}\cap E =\emptyset$ (see Figure \ref{fig_lemas7y8}(a)).
Notice that none of the vertices of $V\setminus W$ is $W$-distinguishing.
Take the partition $\Pi=\{S_1,\ldots,S_{n-3}\}$, where 
$$ S_1=\{w_1,z_1\},  S_2=\{w_2,z_2\},  S_3=\{z_3,z_4\}, S_4=\{w_3\},\ldots, S_{n-3}=\{w_{n-4}\}.$$
Clearly, $\Pi$ is a locating partition of $G$, since
$d(w_1|S_3)=2 \not= 1=d(z_1,S_3)$,
  $d(w_2|S_3)=2 \not= 1=d(z_2|S_3)$ and
  $d(z_3|S_1)=1 \not= 2=d(z_4|S_1)$.
Finally, from Proposition \ref{cal}, we derive that $\beta_p(G)=n-3$.
\end{proof}

As a straightforward consequence of  this lemma, the  following  holds.

\begin{cor} \label{f78}
For every $n\ge 9$, the graphs  $F_7$ and $F_8$, displayed in Figure \ref{pdn-2 wrong}, satisfy $\beta(F_i)=n-3$.
\end{cor}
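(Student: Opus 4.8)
The plan is to reduce Corollary \ref{f78} to Lemma \ref{lema4} by verifying that the two graphs $F_7$ and $F_8$ satisfy exactly the hypotheses of that lemma, so that the conclusion $\beta_p(G)=n-3$ follows immediately. First I would inspect Figure \ref{pdn-2 wrong} together with Table \ref{tab.equivalencias} to read off the explicit structure of $F_7$ and $F_8$. In each case I would identify a set $W$ of $n-4$ pairwise true twins whose induced subgraph is a clique, confirming that $\tau(G)=n-4$ and $G[W]\cong K_{n-4}$; since $n-4>\frac{n}{2}$ for $n\ge9$, Theorem \ref{thm.twinpetit} and Theorem \ref{txulisimo} apply and $W$ is the unique $\tau$-set.

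Next I would compute $N(W)\setminus W$ for each of the two graphs and check that it has exactly two elements. This is the crucial numerical hypothesis $|N(W)\setminus W|=2$ of Lemma \ref{lema4}. Once $\tau(G)=n-4$, $G[W]\cong K_{n-4}$, and $|N(W)\setminus W|=2$ are all confirmed, Lemma \ref{lema4} yields $\beta_p(F_i)=n-3$ directly, for $i\in\{7,8\}$.

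The main obstacle is simply bookkeeping rather than any genuine mathematical difficulty: I must correctly parse the join-decompositions encoded in Figure \ref{pdn-2 wrong} and make sure that, for each graph, the four vertices outside the clique $W$ really do split as two vertices adjacent to $W$ and two vertices at distance $\ge2$ from $W$, matching the partition $V\setminus W=\{z_1,z_2,z_3,z_4\}$ with $N(W)\setminus W=\{z_1,z_2\}$ used in the lemma. I expect $F_7$ and $F_8$ to differ only in the adjacency pattern among $\{z_1,z_2,z_3,z_4\}$ (for instance in whether $z_1z_2\in E$ and how $z_3,z_4$ attach), but since Lemma \ref{lema4} already covers every such pattern, any of these configurations lands in its conclusion. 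The proof therefore consists of confirming the three structural facts above for both graphs and then invoking the lemma.
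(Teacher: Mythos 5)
Your proposal matches the paper's own argument: the paper derives Corollary~\ref{f78} as ``a straightforward consequence'' of Lemma~\ref{lema4}, i.e.\ exactly by checking that $F_7$ and $F_8$ have $\tau=n-4$ with a clique $\tau$-set $W$ and $|N(W)\setminus W|=2$ and then invoking the lemma. The verification you outline is precisely the intended (omitted) bookkeeping, so the proof is correct and takes the same route.
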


\begin{figure}[t]
\begin{center}
\includegraphics[width=0.8\textwidth]{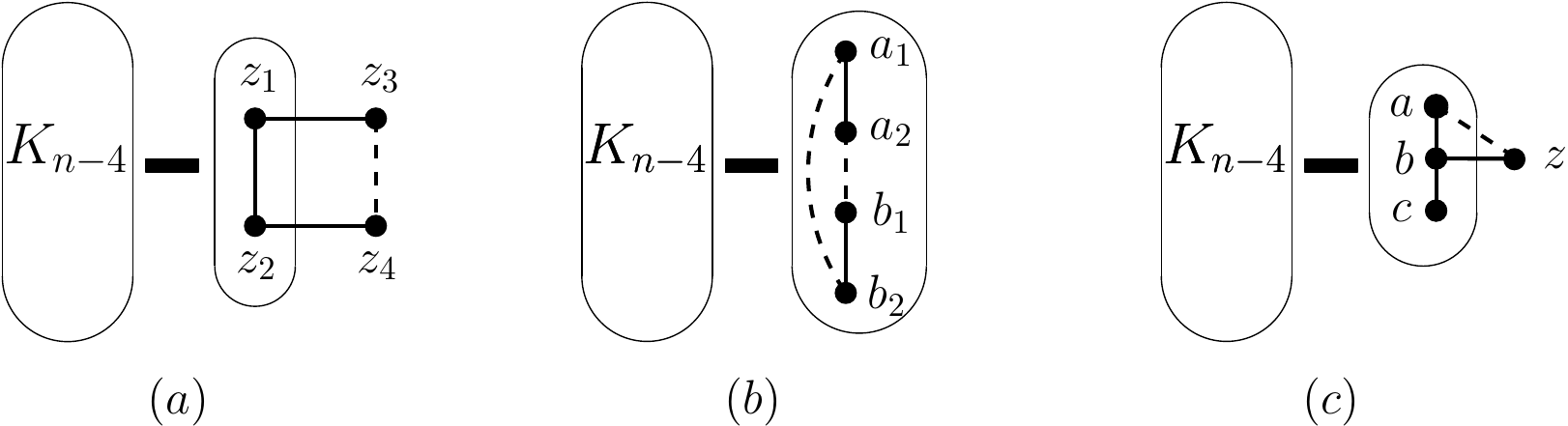}
\caption{In the three cases, $\tau=n-4$ and $G[W]\cong K_{n-4}$. 
Solid lines hold for adjacent vertices meanwhile dashed lines are optional.}
\label{fig_lemas7y8}
\end{center}
\end{figure}

\begin{figure}[!hbt]
\begin{center}
\includegraphics[width=0.99\textwidth]{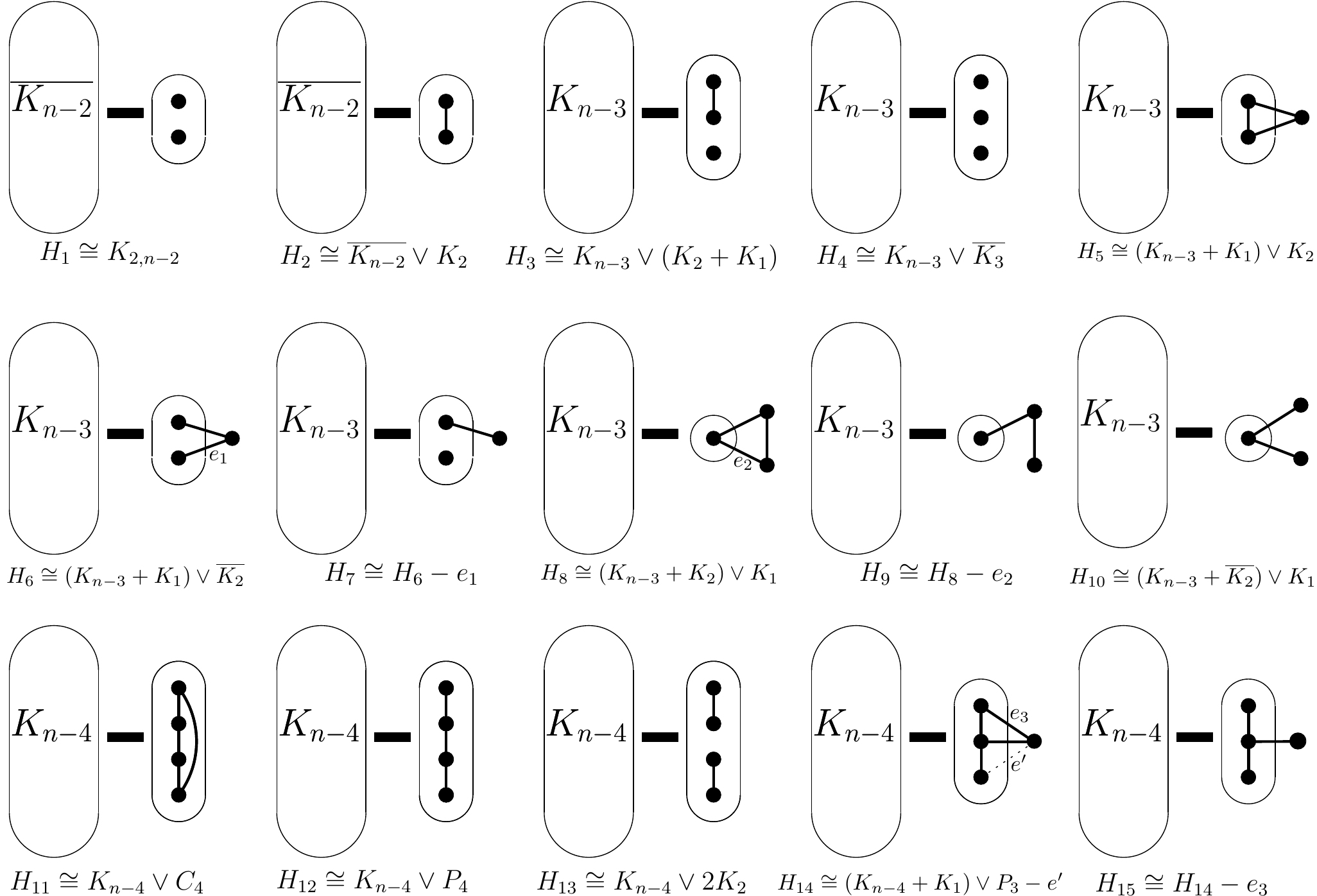}
\caption{These are all graph families such that $\beta_p(G)=n-2$. 
If $i\in\{1,2\}$, then $\tau(H_i)=n-2$. 
If $i\in\{3,\ldots,10\}$, then $\tau(H_i)=n-3$.
If $i\in\{11,\ldots,15\}$, then $\tau(H_i)=n-4$. }\label{taun234}
\end{center}
\end{figure}

\begin{thm} \label{n-2 true}
Let $G$ be a  graph of order $n\ge9$. 
Then,  $\beta_p(G)=n-2$ if and only if $G$ belongs to the following family $\{H_i\}_{i=1}^{15}$ (see Figure \ref{taun234}):

\begin{center}
\begin{tabular}{lll}
 $H_1 \cong  K_{2,n-2}$ &
 $H_2\cong \overline{K_{n-2}} \vee K_2$ &
 $H_3\cong K_{n-3}\vee (K_2+K_1)$    \\
 $H_4\cong K_{n-3}\vee \overline{K_{3}} $ &
 $H_5\cong (K_{n-3}+K_1)\vee K_2 $ &
 $H_6\cong  (K_{n-3}+K_1)\vee \overline{K_2} $    \\
$ H_7\cong  H_6 - e_1 $  &
$ H_8\cong  (K_{n-3} + K_2) \vee K_1 $ &
$ H_9\cong H_8 - e_2 $\\
$ H_{10}\cong  (K_{n-3}+\overline{K_2})\vee K_1 $&
$ H_{11}\cong  K_{n-4}\vee C_4  $  &
$ H_{12}\cong K_{n-4}\vee P_4  $  \\
$ H_{13}\cong  K_{n-4}\vee 2\, K_2  $ &
$ H_{14}\cong  (K_{n-4} + K_1) \vee P_3 - e'$ &
$ H_{15}\cong H_{14} - e_3 $ \\
\end{tabular}
\end{center}

\vspace{.2cm}\noindent
where
$e'$ is an edge joining the vertex of $K_1$ with an endpoint of  $P_3$ in $(K_{n-4} + K_1)$
and
$e_1$ is an edge joining the vertex of $K_1$ with a vertex of  $\overline {K_2}$ in $H_6$;
$e_2$ is an edge joining the vertex of $K_1$ with a vertex of  ${K_2}$ in $H_8$;
$e_3$ is an edge joining the vertex of $K_1$ with an endpoint of  ${P_3}$ in $H_{14}$.

\end{thm}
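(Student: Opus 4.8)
The plan is to organize the proof into two directions, leaning heavily on the structural machinery already developed. For the easy direction (sufficiency), I would verify that each of the fifteen graphs $H_1,\ldots,H_{15}$ satisfies $\beta_p(G)=n-2$. Here the twin-number tools give us the value almost for free: the graphs split naturally by their twin number, as indicated in Figure \ref{taun234}. For $H_1\cong K_{2,n-2}$ and $H_2\cong\overline{K_{n-2}}\vee K_2$ we have $\tau=n-2$, and these are exactly two of the four graphs from Proposition \ref{prop.twingran}; their partition dimension is read off from Table \ref{tab.cartProductPaths} or computed directly. For $H_3,\ldots,H_{10}$ we have $\tau=n-3$ with $G[W]\cong K_{n-3}$, so Proposition \ref{cal} gives $\beta_p\ge\tau+1=n-2$, and the matching upper bound $\beta_p\le n-2$ follows by exhibiting an explicit locating partition into $n-2$ parts (or by invoking Theorem \ref{txulisimo}, which bounds $\beta_p\le\frac{n+\tau}{2}=\frac{2n-3}{2}$, forcing $\beta_p=n-2$ once we know $\beta_p\ge n-2$). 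For $H_{11},\ldots,H_{15}$ we have $\tau=n-4$ with $G[W]\cong K_{n-4}$ and exactly two vertices in $N(W)\setminus W$, so Lemma \ref{lema4} delivers $\beta_p=n-3$... which signals that I must be careful: the assignment in the figure places these at $n-2$, so the relevant lemma here is a count of the remaining structure, and I would verify each against Proposition \ref{cal} together with a direct partition argument.

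For the hard direction (necessity), suppose $\beta_p(G)=n-2$; I must show $G$ is one of the fifteen. The first move is to eliminate the small-twin regime: by Theorem \ref{thm.twinpetit}, if $\tau(G)\le\frac{n}{2}$ then $\beta_p(G)\le n-3$, a contradiction. Hence $\tau>\frac{n}{2}$, and there is a unique $\tau$-set $W$. Now I split on the induced subgraph $G[W]$. If $G[W]\cong\overline{K_\tau}$, then Proposition \ref{cal0} gives $\beta_p(G)=\tau$, so $\tau=n-2$; combined with Proposition \ref{prop.twingran} this isolates the graphs with $\tau=n-2$ and empty $G[W]$, namely $H_1$ and $H_2$. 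If instead $G[W]\cong K_\tau$, then Corollary \ref{xulisimo} (with $h=2$) tells us $n-4\le\tau\le n-3$, so either $\tau=n-3$ or $\tau=n-4$.

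The remaining and most delicate casework handles the complete-$G[W]$ situation. For $\tau=n-3$ I would set $V\setminus W=\{z_1,z_2,z_3\}$ and classify all possible adjacency patterns among the $z_i$ and between the $z_i$ and $W$, subject to: no $z_i$ is a twin of a vertex in $W$ (else $\tau$ would be larger), and no $W$-distinguishing vertex or isolated vertex of $G[N(W)\setminus W]$ exists (else Lemma \ref{lema3}(a),(b) would force $\beta_p=\tau+1=n-2$, which is consistent, so in fact these lemmas help \emph{produce} the $n-2$ value rather than exclude it). I expect the enumeration to proceed by the size $|N(W)\setminus W|\in\{1,2,3\}$ and, within each, by the induced graph on $N(W)\setminus W$ and the edges to $V\setminus N[W]$; this should produce exactly $H_3$ through $H_{10}$. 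For $\tau=n-4$, writing $V\setminus W=\{z_1,z_2,z_3,z_4\}$, Lemma \ref{lema4} shows that $|N(W)\setminus W|=2$ forces $\beta_p=n-3$, \emph{excluding} it, so we must have $|N(W)\setminus W|\in\{3,4\}$; analyzing $G[N(W)\setminus W]$ and applying Lemma \ref{lema3} to rule out distinguishing/isolated vertices should yield precisely $H_{11}$ through $H_{15}$ (the paths, cycles, and matchings on four vertices joined to $K_{n-4}$, together with the two edge-deletion variants). The main obstacle is the sheer volume and correctness of this adjacency enumeration: each surviving configuration must be shown to actually attain $\beta_p=n-2$ (lower bound from Proposition \ref{cal}, upper bound from an explicit partition), and every excluded configuration must be matched to a lemma that pushes $\beta_p$ down to $n-3$ or up to $n-1$. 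Keeping the case tree exhaustive and non-redundant — and confirming no two surviving graphs are isomorphic — is where the real work lies.
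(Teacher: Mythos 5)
Your overall architecture matches the paper's: sufficiency by splitting the fifteen graphs according to $\tau\in\{n-2,n-3,n-4\}$, necessity by first forcing $\tau>\frac{n}{2}$ via Theorem \ref{thm.twinpetit}, then $n-4\le\tau\le n-2$ via Corollary \ref{xulisimo}, then enumerating by $|N(W)\setminus W|$ with Lemmas \ref{lema3} and \ref{lema4} pruning the cases. The necessity outline is sound and essentially identical to the paper's. Your alternative for the upper bound when $\tau=n-3$ (using Theorem \ref{txulisimo} to get $\beta_p\le\lfloor\frac{2n-3}{2}\rfloor=n-2$ directly, instead of the paper's appeal to the $\beta_p=n-1$ characterization) is a legitimate small shortcut.

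However, there is a genuine gap in the sufficiency direction for $H_{11},\ldots,H_{15}$. For these graphs $\tau=n-4$ and $G[W]\cong K_{n-4}$, so Proposition \ref{cal} only gives $\beta_p\ge\tau+1=n-3$, and Theorem \ref{txulisimo} gives $\beta_p\le\frac{n+\tau}{2}=n-2$; what remains --- and is the hardest step of the whole theorem --- is to rule out $\beta_p=n-3$, i.e., to prove that \emph{no} locating partition with $n-3$ parts exists. You first misapply Lemma \ref{lema4} (which requires $|N(W)\setminus W|=2$, whereas these graphs have $|N(W)\setminus W|\in\{3,4\}$), then retreat to ``verify each against Proposition \ref{cal} together with a direct partition argument,'' but neither of these can supply the missing lower bound: Proposition \ref{cal} is off by one, and exhibiting a partition only proves upper bounds. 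The paper closes this gap with a dedicated contradiction argument (its Cases 3.1 and 3.2): assuming a locating partition $\{S_1,\dots,S_{n-3}\}$ with $w_i\in S_i$, it shows every part meeting $V\setminus W$ has size at most $2$ and the singleton part forces two of the four outside vertices to receive identical distance vectors, using the specific adjacency patterns of $C_4$, $P_4$, $2K_2$ and the two $P_3$-based configurations. Without some argument of this kind your proof establishes only $\beta_p(H_i)\in\{n-3,n-2\}$ for $i=11,\dots,15$, which is not enough.
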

\begin{proof} ($\Longleftarrow$)
First suppose that $G$ is a graph belonging to the family $\{H_i\}_{i=1}^{15}$.
We distinguish three cases.

\noindent \textbf{Case 1}: $G\in\{H_1,H_2\}$. 
Hence, $\tau(G)=n-2$ and its $\tau$-set $W$ satisfies $G[W]\cong \overline{K_{n-2}}$.
Thus, according to Proposition \ref{cal0}, $\beta_p(G)=n-2$.

\noindent \textbf{Case 2}: $G\in\{H_i\}_{i=3}^{10}$. 
Hence, $\tau(G)=n-3$ and its $\tau$-set $W$ satisfies $G[W]\cong K_{n-3}$.
Thus,  according to Proposition \ref{cal},  $\beta_p (G) \ge \tau(G)+1=n-2$. 
Furthermore, from Proposition \ref{n-1iff} we deduce that $\beta_p (G)=n-2$.

\noindent \textbf{Case 3}: $G\in\{H_i\}_{i=11}^{15}$. 
Clearly, for all these graphs ${\rm diam}(G)=2$, $\tau(G)=n-4$ and its $\tau$-set $W$ satisfies $G[W]\cong K_{n-4}$.  
According to Proposition \ref{cal} and Theorem \ref{pretty}, $n-3 \le \beta_p(G) \le n-2$.
Suppose that there exists a locating partition  $\Pi=\{S_1, \dots, S_{n-3}\}$ of cardinality $n-3$.
If $W=\{w_1, \dots, w_{n-4}\}$, assume  that, for every $i\in\{1,\ldots,n-4\}$,  $w_i\in S_i$.
We distinguish two cases.

\textbf{Case 3.1}: 
 $G \in \{H_{11},H_{12},H_{13}\}$.
Note that $N(W)=V(G)$ and in all cases
there is a labelling $V(G) \setminus W=\{a_1,a_2,b_1,b_2\}$  
such that $d(a_1,a_2)=1$, $d(b_1,b_2)=1$, $d(a_1,b_1)=2$ and $d(a_2,b_2)=2$ (see Figure \ref{fig_lemas7y8}(b)).
	
Observe that  $\vert S_{n-3}\vert=1$, since $r(z,\Pi)=(1, \dots,1,0)$ for every $z\in\{a_1,a_2,b_1,b_2\}\cap S_{n-3}$. Notice also that
$\vert S_{i}\vert\leq2$ for  $i\in \{1, \dots ,n-4\}$, as for every $x\in S_i$, we have
$r(x,\Pi)=(1,\ldots,1,\overset{i)}{0},1,\ldots,1,h)$, with $h\in\{1,2\}$.
Hence, there are exactly three sets of $\Pi$ of cardinality 2.
We can suppose without loss of generality that $S_1=\{w_1,x\}$, $S_2=\{w_2,y\}$, $S_3=\{w_3,z\}$ and $S_{n-3}=\{t\}$, 
where $\{x,y,z,t\}=\{a_1,a_2,b_1,b_2\}$.
Hence, $d(t,x)=d(t,y)=d(t,z)=2$, a contradiction.
	
\textbf{Case 3.2}:
 $G \in \{H_{14},H_{15}\}$. 
	Note that $\vert N(W) \setminus W\vert =3$ and that there is a labelling $V(G)\setminus W=\{a,b,c,z\}$ such that $N(W) \setminus W=\{a,b,c\}$, 
$d(a,b)=d(b,c)=d(b,z)=1$, $d(c,a)=d(c,z)=2$ and $d(a,z)\in\{1,2\}$ (see Figure \ref{fig_lemas7y8}(c)).
	
Notice that
	$\vert S_{n-3}\vert\le 2$,
	since for every $x\in\{a,b,c\}\cap S_{n-3}$,  $r(x,\Pi)=(1, \dots,1,0)$.
	Moreover, $b\notin S_{n-3}$,
	otherwise $a$ and $c$ do not belong to $S_{n-3}$
	and we would have $r(a,\Pi)=r(c,\Pi)=(1,\ldots,1,1)$.
	So, we can assume without loss of generality that $\{w_1,b\}\subseteq S_1$.
	If $\{a,c\}\cap S_{n-3}\neq \emptyset$, then $r(w_1,\Pi)=r(b,\Pi)=(1,\ldots,1,1)$.
	Consequently,
	$r(w_i,\Pi)=r(c,\Pi)=(1,\ldots,1,2)$ for every $i\in \{1,\ldots,n-4\}$, a contradiction.

($\Longrightarrow$) Now assume that $G$ is a graph such that $\beta_p(G)=n-2$.
By Theorem \ref{thm.twinpetit}, $\tau(G)>\frac{n}{2}$, and according to Corollary \ref{xulisimo}, we have $n-4 \le \tau(G) \le n-2$.
We distinguish three cases, depending on the cardinality of $\tau(G)$.

\noindent \textbf{Case 1}: $\tau(G)=n-2$. 
Thus, according to Proposition \ref{prop.twingran} and Theorem \ref{pretty}, $G\in\{H_1,H_2\}$.

\noindent \textbf{Case 2}: $\tau(G)=n-3$. 
In thid case, from Proposition \ref{cal0} we deduce that its $\tau$-set $W$ satisfies $G[W]\cong K_{n-3}$.
We distinguish three cases, depending on the cardinality of $N(W)\setminus W$.

\textbf{Case 2.1}: $|N(W)\setminus W|=3$.
In this case, $G[N(W)\setminus W] \in \{K_3,  P_3,  K_2+K_1, \overline{K_3} \}$.
If $G[N(W)\setminus W]$ is  $K_3$ or $P_3$, then $\tau (G)\ge n-2$, a contradiction.
If $G[N(W)\setminus W]\cong K_2+K_1$, then $G\cong H_3$, and if
$G[N(W)\setminus W]\cong\overline{K_3}$, then $G\cong H_4$.

\textbf{Case 2.2}: 
$|N(W)\setminus W|=2$.
In this case, $G[N(W)\setminus W] \in \{K_2, \overline{K_2} \}$ and
 $|V\setminus N(W)|=1$.
 Let $z$ be the vertex in $V\setminus N(W)$.
 If $G[N(W)\setminus W]\cong K_2$ and $\deg(z)=1$, then $\tau(G)=n-2$, a contradiction.
 If $G[N(W)\setminus W]\cong K_2$ and $\deg(z)=2$, then $G\cong H_5$.
 If $G[N(W)\setminus W]\cong  \overline{K_2}$ and $\deg(z)=2$, then $G\cong H_6$.
 Finally, if $G[N(W)\setminus W]\cong  \overline{K_2}$ and $\deg(z)=1$, then $G\cong H_7$.

\textbf{Case 2.3}:
$|N(W)\setminus W|=1$.
Let $N(W)\setminus W=\{x\}$ and  $V\setminus N(W)=\{y,z\}$.
If $\deg(y)=\deg(z)=2$, then $G\cong H_8$.
If $\{\deg(y),\deg(z)\}=\{1,2\}$, then $G\cong H_{9}$.
If $\deg(y)=\deg(z)=1$, then $G\cong H_{10}$.

\noindent \textbf{Case 3}: $\tau(G)=n-4$. 
Let $W$ be its $\tau$-set.
In this case, from Proposition \ref{cal0} we deduce that its $\tau$-set $W$ satisfies $G[W]\cong K_{n-4}$.
Moreover, from Lemmas \ref{lema3} and \ref{lema4}, we deduce that
$G$ does not contain any $W$-distinguishing vertex
and $|N(W)\setminus W|\ge 3$.
Hence, $3\le |N(W)\setminus W|\le 4$.
We distinguish two cases, depending on the cardinality of $N(W)\setminus W$.

\textbf{Case 3.1}: $|N(W)\setminus W|=4$.
According to Lemma \ref{lema3}, all vertices of $G[N(W)\setminus W]$ have degree either 1 or 2.
Thus, $G[N(W)\setminus W]$ is a isomorphic to either $C_4$ or $P_4$ or  $2K_2$.
Hence, $G$ is isomorphic  to either $H_{11}$ or $H_{12}$ or $H_{13}$.

\textbf{Case 3.2}: $|N(W)\setminus W|=3$.
According to Lemma \ref{lema3}, $G[N(W)\setminus W]$ is a either $C_3$ or a $P_3$.
Suppose that $G[N(W)\setminus W]$ is $C_3$.
Then, by Lemma \ref{lema3}(d), every vertex of $N(W)\setminus W$ is adjacent to the unique vertex $z$ of $V\setminus N(W)$, a contradiction since in this case $z$ would be a $W$-distinguishing vertex.
Thus, $G[N(W)\setminus W]$ is  $P_3$.
According to Lemma \ref{lema3}(d), the central vertex $w$ of $P_3$
is adjacent to the unique vertex $z$ of $V\setminus N(W)$.
Observe also that one of the remaining two vertices of this path may be adjacent to vertex $z$, but not both,
since in this case $z$ would be a $W$-distinguishing vertex.
Hence, $G$ is isomorphic
to either $H_{14}$ or  $H_{15}$.
\end{proof}

\section*{Acknowledgements}
\vspace{-.3cm} Research partially supported by grants MINECO MTM2015-63791-R, Gen. Cat. DGR 2014SGR46 and MTM2014-60127-P.


\end{document}